\theoremstyle{definition}
\newtheorem{define}{Definition}[section]
\theoremstyle{plain}
\newtheorem{Theorem}{Theorem}
\theoremstyle{plain}
\newtheorem{Corollary}[Theorem]{Corollary}
\theoremstyle{plain}
\newtheorem{theorem}[define]{Theorem}
\theoremstyle{plain}
\newtheorem{lemma}[define]{Lemma}
\theoremstyle{plain}
\newtheorem{proposition}[define]{Proposition}
\theoremstyle{plain}
\newtheorem{corollary}[define]{Corollary}
\theoremstyle{remark}
\newtheorem{rem}[define]{Remark}
\theoremstyle{definition}
\theoremstyle{plain}
\newtheorem{fact}[define]{Fact}
\theoremstyle{definition}
\numberwithin{equation}{section}
\numberwithin{figure}{section}
\numberwithin{table}{section}
\renewcommand{\labelenumi}{\textup{(\theenumi) }}
\renewcommand{\theenumi}{\arabic{enumi}}
\title[Visible actions on spherical nilpotent orbits]
{Visible actions on spherical nilpotent orbits in complex simple Lie algebras}
\author[A. Sasaki]{Atsumu SASAKI}
\thanks{This work was partly supported 
by Grant-in-Aid for Young Scientists (B) (22740029), 
The Ministry of Education, Culture, Sports, Science and Technology, Japan. 
}
\subjclass[2010]{Primary: 22E46; Secondary: 32M10, 32M05, 14M17}
\keywords{visible action; multiplicity-free representation; nilpotent orbit; 
induction theorem}
\address{Department of Mathematics, School of Science, Tokai University, 
4-1-1, Kitakaname, Hiratsuka, Kanagawa, 259-1292, Japan. }
\email{atsumu@tokai-u.jp}
\date{\today}
\begin{document}

\begin{abstract}
This paper studies nilpotent orbits in complex simple Lie algebras 
from the viewpoint of strongly visible actions in the sense of T. Kobayashi. 
We prove that 
the action of a maximal compact group consisting of inner automorphisms on a nilpotent orbit 
is strongly visible 
if and only if 
it is spherical, namely, admitting an open orbit of a Borel subgroup. 
Further, we find a concrete description of a slice in the strongly visible action. 
As a corollary, 
we clarify a relationship among different notions of complex nilpotent orbits: 
actions of Borel subgroups (sphericity); 
multiplicity-free representations in regular functions; 
momentum maps; 
and actions of compact subgroups (strongly visible actions). 
\end{abstract}

\maketitle



\section{Introduction}
\label{sec:intro}

This paper studies nilpotent orbits, 
and bridging the two notions, ``spherical varieties'' studied by D. Panyushev \cite{pa1,pa2} 
and ``visible actions'' introduced by T. Kobayashi \cite{triunity}. 
We shall prove that 
\begin{center}
``spherical nilpotent orbits $=$ visible nilpotent orbits'', 
\end{center}
and give some structural results 
(``slice'' coming from the right-hand side). 

T. Kobayashi established a new theory on multiplicity-freeness 
for unitary representations of Lie groups by introducing the notion of visible actions. 
We recall briefly 
from \cite{triunity,mftheorem,propagation} 
the \textit{propagation theory of multiplicity-freeness property}. 
Let $G$ be a Lie group, and 
$\mathcal{V}$ a $G$-equivariant Hermitian holomorphic vector bundle over a complex manifold $D$. 
Then, 
we have a natural action of $G$ on the space $\mathcal{O}(D,\mathcal{V})$ 
of holomorphic sections, 
which is not necessary irreducible 
when $G$ does not act transitively on $D$. 
Suppose we are given a unitary representation $\mathcal{H}$ of $G$ 
from which there exists a continuous injective homomorphism to $\mathcal{O}(D,\mathcal{V})$. 
In general, $\mathcal{H}$ may not be multiplicity-free 
even if each fiber $\mathcal{V}_x$ is multiplicity-free 
as a representation of the isotropy subgroup $G_x$ ($x\in D$). 
However, 
$\mathcal{H}$ becomes multiplicity-free whenever 
$G$ acts on the base space $D$ strongly visibly. 
This is the propagation theory in \cite{propagation}, 
which yields a unified explanation of multiplicity-freeness 
for various kinds of multiplicity-free representations 
which have been studied by different approaches (see \cite{triunity,mftheorem,propagation}).

A holomorphic action of a Lie group $G$ on a connected complex manifold $D$ 
is called \textit{strongly visible} if there exist a real submanifold $S$ in $D$ 
and an anti-holomorphic diffeomorphism $\sigma$ of $D$ such that 
the following conditions are satisfied: 
	\begin{gather}
	D=G\cdot S,
	\tag{V.1}
	\label{visible:v1}
	\\
	{\sigma |}_S=\textup{id}_S,
	\tag{S.1}
	\label{visible:s1}
	\\
	\text{$\sigma$ preserves each $G$-orbit in $D$. }
	\tag{S.2}
	\label{visible:s2}
	\end{gather}

We say that the submanifold $S$ is a \textit{slice}. 
It is automatically totally real, 
namely, $J_x(T_xS)\cap T_xS=\{ 0\} $ for any $x\in S$ (see \cite[Remark 3.3.3]{mftheorem}). 
Here, $J$ stands for the complex structure of $D$. 
We are particularly interested in a slice of minimal dimension, 
namely, 
which coincides with the codimension of generic $G$-orbits in $D$. 

We remark that 
the original definition \cite[Definition 3.3.1]{mftheorem} of strongly visible actions 
is wider slightly, 
namely, 
it allows a complex manifold $D$ 
containing a non-empty $G$-invariant open set satisfying (\ref{visible:v1})--(\ref{visible:s2}). 
For the propagation theory of multiplicity-freeness property, 
this wider definition is sufficient. 
However, 
since we shall see that these two definitions are equivalent 
for $G_u$-actions on complex nilpotent orbits, 
we adopt the above definition for simplicity for the rest of this paper. 

Strongly visible actions arise from many different geometric settings 
(cf. \cite{triunity, mftheorem, nontube}). 
Recently, 
a classification theory of strongly visible actions has been developed for 
Hermitian symmetric spaces \cite{symmetric}, 
generalized flag varieties \cite{cartan,tanakaC,tanakaD,tanakaB,tanaka-except}, 
and linear spaces \cite{irr,red}. 

This paper deals with a new case where a complex manifold 
is a nilpotent orbit in a complex simple Lie algebra. 
In order to state our main results, 
we fix notation. 
Let $\mathfrak{g}$ be a finite-dimensional complex simple Lie algebra 
and $G_{\mathbb{C}}:=\operatorname{Int}\mathfrak{g}$ 
the inner automorphism group of $\mathfrak{g}$. 
We denote by $\mathcal{O}_X$ a $G_{\mathbb{C}}$-orbit through $X\in \mathfrak{g}$. 
Then we have a $G_{\mathbb{C}}$-isomorphism of complex manifolds 
$\mathcal{O}_X\simeq G_{\mathbb{C}}/(G_{\mathbb{C}})_X$ 
where $(G_{\mathbb{C}})_X$ stands for the isotropy subgroup at $X$. 
We say that $\mathcal{O}_X$ is a \textit{nilpotent} orbit 
if $X\in \mathfrak{g}$ is a nilpotent element, 
and is \textit{spherical} 
if a Borel subgroup of $G_{\mathbb{C}}$ has an open orbit in $\mathcal{O}_X$. 
Let $G_u$ be a compact real form of $G_{\mathbb{C}}$. 
We prove: 

\begin{Theorem}
\label{thm:visible}
If $\mathcal{O}_X$ is nilpotent and spherical, 
then the $G_u$-action on $\mathcal{O}_X$ is strongly visible. 
\end{Theorem}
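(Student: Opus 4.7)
The plan is to build the antiholomorphic involution $\sigma$ and the slice $S$ explicitly from a $\tau$-adapted Jacobson--Morozov triple attached to $X$, with the spherical hypothesis entering only in the choice of $S$. First I would apply Jacobson--Morozov to embed $X$ in an $\mathfrak{sl}_{2}$-triple $(H,X,Y)$; since $\mathcal{O}_{X}$ is stable under the complex conjugation $\tau$ of $\mathfrak{g}$ with respect to $\mathfrak{g}_{u}$ ($\tau X$ being a nilpotent element of the same Jordan type as $X$), one may normalize the triple to be $\tau$-adapted: $\tau H=-H$, $\tau X=-Y$, $\tau Y=-X$. The elements $X-Y$, $i(X+Y)$, $iH$ then span a copy of $\mathfrak{su}(2)$ inside $\mathfrak{g}_{u}$, and $w:=\exp\!\bigl(\tfrac{\pi}{2}(X-Y)\bigr)\in G_{u}$ satisfies, by a direct $\mathfrak{sl}_{2}$-calculation, $\operatorname{Ad}(w)X=-Y$, $\operatorname{Ad}(w)Y=-X$, and $\operatorname{Ad}(w)H=-H$. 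Setting $\sigma:=\operatorname{Ad}(w)\circ\tau$, the composition is antiholomorphic (an antilinear involution composed with a $\mathbb{C}$-linear automorphism), and $\sigma(X)=\operatorname{Ad}(w)(-Y)=X$, so $\sigma(\mathcal{O}_{X})=\mathcal{O}_{X}$ and $\sigma$ restricts to an antiholomorphic involution of $\mathcal{O}_{X}$.

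Next I would construct the slice. By Panyushev's theorem, the spherical nilpotent orbits are precisely those of height $\le 3$ in the $\mathbb{Z}$-grading $\mathfrak{g}=\bigoplus_{k}\mathfrak{g}(k)$ determined by $\operatorname{ad} H$, and this bounded-height structure controls the generic $G_{u}$-orbit codimension $r:=\operatorname{codim}_{\mathbb{R}}(G_{u}\cdot X)$. Using it I would produce a real abelian subspace $\mathfrak{a}\subset i\mathfrak{g}_{u}$ of dimension $r$, contained in the $(-1)$-eigenspace of $\operatorname{Ad}(w)$ and containing $\mathbb{R}H$, such that $\exp(\mathfrak{a})\cdot X$ meets every $G_{u}$-orbit in $\mathcal{O}_{X}$, and set $S:=\exp(\mathfrak{a})\cdot X$.

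The three axioms are then verified directly. Condition (V.1) $\mathcal{O}_{X}=G_{u}\cdot S$ is the defining property of $\mathfrak{a}$. For (S.1), every $A\in\mathfrak{a}$ satisfies both $\tau A=-A$ (since $A\in i\mathfrak{g}_{u}$) and $\operatorname{Ad}(w)A=-A$, so $\sigma(A)=A$; since $\sigma$ integrates to a real Lie group automorphism of $G_{\mathbb{C}}$, one obtains $\sigma(\exp(A)\cdot X)=\exp(\sigma A)\cdot\sigma(X)=\exp(A)\cdot X$. For (S.2), for $g\in G_{u}$ and $s\in S$ one computes, using $\tau g=g$ and $\sigma(s)=s$,
\[
\sigma(g\cdot s)=\operatorname{Ad}(w)\,\tau(\operatorname{Ad}(g)\,s)=\operatorname{Ad}(wgw^{-1})\cdot\sigma(s)=\operatorname{Ad}(wgw^{-1})\cdot s,
\]
and $wgw^{-1}\in G_{u}$ places this in $G_{u}\cdot s=G_{u}\cdot(g\cdot s)$.

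The main obstacle is the construction of $\mathfrak{a}$: producing a real abelian subspace of the correct dimension $r$, sitting in $i\mathfrak{g}_{u}\cap\ker(\operatorname{Ad}(w)+\mathrm{id})$, whose exponential orbit through $X$ meets every $G_{u}$-orbit in $\mathcal{O}_{X}$. This is precisely where the spherical hypothesis is indispensable, and I expect the argument either to build $\mathfrak{a}$ uniformly from the height-$\le 3$ grading---for instance as a Cartan subspace of the symmetric-pair-type structure attached to the $\mathfrak{sl}_{2}$-triple---or to proceed case-by-case through Panyushev's list of spherical nilpotent orbits.
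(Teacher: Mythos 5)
Your construction of the anti-holomorphic map $\sigma=\operatorname{Ad}(w)\circ\tau$ from a $\tau$-adapted (Cayley) triple is sound as far as it goes, and your verifications of (S.1) and (S.2) are correct \emph{conditional on} the existence of the set $S=\exp(\mathfrak{a})\cdot X$ with the stated properties. (A small caveat: $\sigma^2=\operatorname{Ad}(w^2)$ acts on $\mathfrak{g}(m)$ by $(-1)^m$, so $\sigma$ is in general not an involution when odd graded pieces are present; this is harmless because the definition only requires an anti-holomorphic diffeomorphism, but you should not call it an involution.)

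The genuine gap is that the entire content of the theorem is concentrated in the step you defer: producing a real abelian subspace $\mathfrak{a}\subset i\mathfrak{g}_u\cap\ker(\operatorname{Ad}(w)+\mathrm{id})$ of the right dimension with $G_u\cdot\exp(\mathfrak{a})\cdot X=\mathcal{O}_X$. Condition (V.1) is not ``the defining property of $\mathfrak{a}$'' --- it is precisely what must be proved, and nothing in the height-$\le 3$ hypothesis hands you such a toral slice. Indeed, the paper does \emph{not} produce a slice of the form $\exp(\mathfrak{a})\cdot X$ at all. Its route is: (i) realize $\overline{\mathcal{O}_X}$ as $G_u\cdot\mathfrak{n}$ via the momentum-map picture $G_u\times_{L_u}\mathfrak{n}\to\mathfrak{g}$, where $\mathfrak{n}=\bigoplus_{m\ge2}\mathfrak{g}(m)$ and $L_u$ is a compact form of $\mathfrak{g}(0)$; (ii) prove an induction theorem reducing strong visibility of the $G_u$-action on $\mathcal{O}_X$ to strong visibility of the \emph{linear} $L_u$-action on $\mathfrak{n}$; and (iii) check, case by case through Panyushev's list, that when $\operatorname{ht}(\mathcal{O}_X)\le3$ the $L_u$-action on $\mathfrak{n}$ is one of the classified multiplicity-free linear actions, whence a concrete real vector subspace $S_0\subset\mathfrak{n}$ (a sum of real root lines, fixed by the conjugation of a \emph{split} real form) satisfies $\mathfrak{n}=L_u\cdot S_0$; the slice is then $S_0\cap\mathfrak{n}^{\circ}$. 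Unless you can actually construct your $\mathfrak{a}$ --- which would amount to redoing essentially this case-by-case analysis in a different guise, and for which existence is not clear (even for the minimal orbit, $\exp(\mathbb{R}H)\cdot X$ only sweeps out positive multiples of $X$) --- the proposal does not prove the theorem.
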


The idea of our proof of Theorem \ref{thm:visible} is based on 
the \textit{induction theorem of strongly visible actions} 
which is first formulated by Kobayashi \cite[Theorem 20]{mftheorem} for Type A group. 
We generalize this idea for arbitrary complex simple Lie groups. 
For this, 
we choose an $\mathfrak{sl}_2$-triple $\{ H,X,Y\} $ containing $X$ as a nilpositive element. 
The semisimple element $H$ defines the $\mathbb{Z}$-grading of $\mathfrak{g}$, 
denoted by $\mathfrak{g}=\bigoplus _{m\in \mathbb{Z}}\mathfrak{g}(m)$. 
Then the complex subalgebra $\mathfrak{l}:=\mathfrak{g}(0)$ is reductive. 
Let $L_{\mathbb{C}}$ be an analytic subgroup of $G_{\mathbb{C}}$ with Lie algebra $\mathfrak{l}$. 
Taking a conjugation if necessary, 
we may and do assume that 
$L_u:=L_{\mathbb{C}}\cap G_u$ is a compact real form of $L_{\mathbb{C}}$. 
We set a nilpotent subalgebra by $\mathfrak{n}:=\bigoplus _{m\geq 2}\mathfrak{g}(m)$. 
Then, the nilpotent orbit 
$\mathcal{O}_X$ can be realized via the following map: 
\begin{align}
G_u\times _{L_u}\mathfrak{n}\to \overline{\mathcal{O}_X},\quad 
(g,Z)\mapsto g\cdot Z, 
\label{eq:induction}
\end{align}
in particular, the closure $\overline{\mathcal{O}_X}$ is equal to 
$G_u\cdot \mathfrak{n}=\{ g\cdot Z:g\in G_u,Z\in \mathfrak{n}\} $. 
In this setting, we generalize the induction theorem of strongly visible actions: 

\begin{Theorem}[see Theorem \ref{thm:induction}]
\label{thm:induction thm}
If the $L_u$-action on $\mathfrak{n}$ is strongly visible, 
then the $G_u$-action on $\mathcal{O}_X$ is strongly visible. 
\end{Theorem}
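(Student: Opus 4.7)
The plan is to lift the strongly visible structure on $\mathfrak{n}$ through the surjection (\ref{eq:induction}), using a normal $\mathfrak{sl}_2$-triple to produce a canonical anti-holomorphic involution of $\mathcal{O}_X$. By the standard Kostant normalization I may assume the compact conjugation $\tau$ of $\mathfrak{g}$ (with fixed points $\mathfrak{g}_u$) satisfies $\tau(H)=-H$ and $\tau(X)=-Y$. The Weyl-group representative $w:=\exp\!\bigl(\tfrac{\pi}{2}(X-Y)\bigr)\in G_u$ then satisfies $\operatorname{Ad}(w)H=-H$, hence $\operatorname{Ad}(w)\mathfrak{g}(m)=\mathfrak{g}(-m)$ for every $m$; in particular $w$ normalizes $L_u$. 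Setting $\sigma:=\operatorname{Ad}(w)\circ\tau$ gives an anti-holomorphic involution of $\mathfrak{g}$ that preserves each $\mathfrak{g}(m)$ (hence $\mathfrak{n}$) and fixes $H,X,Y$ pointwise (hence preserves $\mathcal{O}_X$).

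Let $(S',\sigma')$ witness the strongly visible $L_u$-action on $\mathfrak{n}$, and take $S:=S'\cap\mathcal{O}_X$ as the slice candidate for $G_u$ on $\mathcal{O}_X$. Condition (V.1) is a diagram chase: for $p\in\mathcal{O}_X$, the surjection (\ref{eq:induction}) writes $p=g\cdot Z$ with $g\in G_u$ and $Z\in\mathfrak{n}\cap\mathcal{O}_X$; the identity $\mathfrak{n}=L_u\cdot S'$ gives $Z=l\cdot s$ with $l\in L_u$ and $s\in S'$; since $L_u\subset G_{\mathbb{C}}$ preserves $\mathcal{O}_X$, one has $s\in S$ and $p=(gl)\cdot s\in G_u\cdot S$. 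For (S.2), the equivariance of $\sigma$ gives
\begin{align*}
\sigma(g\cdot Z)=\operatorname{Ad}(w\tau(g)w^{-1})\,\sigma(Z)=\operatorname{Ad}(wgw^{-1})\,\sigma(Z),\quad g\in G_u,
\end{align*}
and since $wgw^{-1}\in G_u$ this reduces matters to showing that $\sigma|_{\mathfrak{n}}$ preserves every $L_u$-orbit in $\mathfrak{n}$.

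The main obstacle is exactly this orbit preservation. The hypothesis supplies \emph{some} $\sigma'$ preserving $L_u$-orbits, but not literally the restriction $\sigma|_\mathfrak{n}$; to close the gap I plan to show that the real subspace $\mathfrak{n}^\sigma:=\{Z\in\mathfrak{n}:\sigma(Z)=Z\}$ meets every $L_u$-orbit in $\mathfrak{n}$, and then arrange the slice to lie in $\mathfrak{n}^\sigma$. The key inputs are: $\sigma|_\mathfrak{n}$ \emph{normalizes} the $L_u$-action, since $\sigma(l\cdot Z)=\operatorname{Ad}(wlw^{-1})\sigma(Z)$ with $wlw^{-1}\in L_u$; the slice $S'$ is totally real of real dimension equal to the complex codimension of a generic $L_u$-orbit; and a compactness argument producing $\sigma$-fixed representatives on each $L_u$-orbit. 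Once $S'\subset\mathfrak{n}^\sigma$ is arranged, then for $s\in S'$ and $l\in L_u$,
\begin{align*}
\sigma(l\cdot s)=\operatorname{Ad}(wlw^{-1})\sigma(s)=\operatorname{Ad}(wlw^{-1})\,s\in L_u\cdot s=L_u\cdot(l\cdot s),
\end{align*}
which finishes (S.2), while (S.1) holds by construction. Establishing $\sigma$-fixed representatives on every $L_u$-orbit is the delicate point, and is where the strongly visible hypothesis for the $L_u$-action on $\mathfrak{n}$ really enters.
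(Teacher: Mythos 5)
Your reduction of (V.1) and (S.2) to the claim that the $L_u$-slice can be arranged to lie in $\mathfrak{n}^{\sigma}$ is sound and parallels the paper's logic (Proposition \ref{prop:v1} and the proof of Theorem \ref{thm:induction}), but the argument stops exactly where the work is. You correctly observe that the hypothesis only supplies \emph{some} pair $(S',\sigma')$, not a slice fixed pointwise by an anti-holomorphic map compatible with the group action, and you never establish that $\mathfrak{n}^{\sigma}$ meets every $L_u$-orbit; the ``compactness argument'' is not supplied. In the paper this input is not deduced abstractly either: it is provided by Theorem \ref{thm:slice-n}, whose case-by-case construction in Section \ref{sec:proof-n} exhibits $S_0$ as a sum of real root vectors inside the normal real form $\mathfrak{g}_{\mathbb{R}}$, hence pointwise fixed by the conjugation $\sigma$ of (\ref{eq:sigma}), whose compatible automorphism $\widetilde{\sigma}$ stabilizes $G_u$ and $L_u$. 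Without that (or a genuinely general substitute), the proof is incomplete.

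Worse, your particular choice $\sigma=\operatorname{Ad}(w)\circ\tau$ with $w=\exp\bigl(\tfrac{\pi}{2}(X-Y)\bigr)$ cannot be repaired for the height-three orbits. Since $\tau(X-Y)=X-Y$, one computes $\sigma^{2}=\operatorname{Ad}(w^{2})=\operatorname{Ad}\bigl(\exp(\pi(X-Y))\bigr)$, which acts on $\mathfrak{g}(m)$ by $(-1)^{m}$; so $\sigma$ is not an involution on the odd graded pieces, and on $\mathfrak{g}(3)$ the equation $\sigma(Z)=Z$ forces $Z=\sigma^{2}(Z)=-Z$, i.e.\ $Z=0$. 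Because $\sigma$ preserves each $\mathfrak{g}(m)$, it follows that $\mathfrak{n}^{\sigma}\subset\mathfrak{g}(2)$, and since $L_u$ preserves the grading, $\mathfrak{n}^{\sigma}$ misses every $L_u$-orbit through a point with nonzero $\mathfrak{g}(3)$-component. Hence for any spherical orbit of height three (e.g.\ Cases (B3), (D3), (E$_6$3), (F$_4$3), (G$_2$2)) there is no slice $S'$ with $\mathfrak{n}=L_u\cdot S'$ and $S'\subset\mathfrak{n}^{\sigma}$, and your plan collapses. The paper's $\sigma$ --- conjugation with respect to a \emph{normal} real form --- is a true anti-linear involution stabilizing every $\mathfrak{g}(m)$ with a full real form of fixed vectors in each graded piece (Lemmas \ref{lem:rootspace} and \ref{lem:eigenspace}); you would need to switch to such a $\sigma$, and then still import Theorem \ref{thm:slice-n} or prove its conclusion, before the rest of your argument goes through.
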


Theorem \ref{thm:induction thm} means that 
the strong visibility for non-linear action on $\mathcal{O}_X$ 
is induced from the strong visibility for linear action on $\mathfrak{n}$ 
via (\ref{eq:induction}). 
We then can apply the previous results \cite{irr,red} 
for the classification of linear visible actions to the $L_u$-action on $\mathfrak{n}$, 
and thus prove: 

\begin{Theorem}
\label{thm:linear}
If $\mathcal{O}_X$ is spherical, 
then the $L_u$-action on $\mathfrak{n}$ is strongly visible. 
\end{Theorem}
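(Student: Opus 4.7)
The approach is a case-by-case verification driven by Panyushev's classification of spherical nilpotent orbits in complex simple Lie algebras \cite{pa1,pa2}. That classification lists, in terms of weighted Dynkin diagrams, precisely which pairs $(\mathfrak{l},\mathfrak{n})$ arise from spherical $\mathcal{O}_X$; crucially it also shows that, in every such case, the $L_{\mathbb{C}}$-module $\mathfrak{n}$ is itself spherical (and in particular multiplicity-free) as a linear representation. My plan is to run through this finite list, identify the resulting linear action of $L_u$ on $\mathfrak{n}$ in each case, and recognize it inside the previously established classifications of strongly visible linear actions from \cite{irr,red}; the anti-holomorphic involution and the slice come along from those classifications.

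More concretely, I would first set up the graded data attached to the $\mathfrak{sl}_2$-triple $\{H,X,Y\}$ and read off $\mathfrak{l}=\mathfrak{g}(0)$ together with the $L_{\mathbb{C}}$-module structure of each $\mathfrak{g}(m)$. When $X$ is even, the nilradical $\mathfrak{n}=\mathfrak{g}(2)$ is an irreducible $L_{\mathbb{C}}$-module in the spherical cases, so I can invoke the classification of strongly visible linear actions on irreducible multiplicity-free spaces from \cite{irr} to produce an anti-holomorphic involution $\sigma$ on $\mathfrak{n}$ together with a totally real slice $S$ satisfying $\mathfrak{n}=L_u\cdot S$, $\sigma|_S=\mathrm{id}_S$, and $\sigma$ preserving every $L_u$-orbit. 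When $X$ is odd, $\mathfrak{n}$ decomposes into several $L_u$-irreducible summands $\mathfrak{g}(2)\oplus\mathfrak{g}(3)\oplus\cdots$, but the decomposition types that actually appear for spherical $\mathcal{O}_X$ are severely restricted by Panyushev's list and match the strongly visible reducible representations classified in \cite{red}; the slice and involution are then assembled summand by summand from that catalog.

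The main technical point is compatibility of complex conjugations. Because $L_u=L_{\mathbb{C}}\cap G_u$ is a compact real form of $L_{\mathbb{C}}$, the anti-holomorphic involution $\tau$ of $\mathfrak{g}$ defining $G_u$ preserves each graded piece $\mathfrak{g}(m)$ and commutes with the $L_u$-action, so one verifies that $\tau|_{\mathfrak{n}}$ agrees, up to an inner automorphism of $L_u$, with the anti-holomorphic involution supplied by the linear visibility classification; this is what lets conditions (V.1), (S.1), (S.2) be transported without modification to the pair $(L_u,\mathfrak{n})$. The main obstacle I anticipate is the bookkeeping itself: reconciling Panyushev's parametrization (by Bala--Carter labels / weighted Dynkin diagrams) with the Sasaki--Tanaka catalog of strongly visible multiplicity-free representations across the classical and exceptional types, and, in the reducible cases, exhibiting a single totally real submanifold of $\mathfrak{n}$ that serves simultaneously as a slice for all the $L_u$-summands rather than only summand by summand.
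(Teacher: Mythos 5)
Your overall strategy---reduce to Panyushev's height criterion, run through the finite list of weighted Dynkin diagrams, identify each $L_{\mathbb{C}}$-module $\mathfrak{n}$ with an entry of the Sasaki catalog of strongly visible linear actions from \cite{irr,red}, and import slice and involution from there---is exactly the route the paper takes. But your ``main technical point'' contains a concrete error that would derail the argument. You propose to use the anti-holomorphic involution $\tau$ of $\mathfrak{g}$ \emph{defining $G_u$} (i.e., the conjugation with respect to the compact real form) and assert that it preserves each graded piece $\mathfrak{g}(m)$. This is false: since the characteristic element $H$ lies in the real span of the coroots, the compact conjugation satisfies $\tau(H)=-H$, hence sends $\mathfrak{g}_{\alpha}$ to $\mathfrak{g}_{-\alpha}$ and $\mathfrak{g}(m)$ to $\mathfrak{g}(-m)$. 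In particular $\tau$ does not even restrict to a map of $\mathfrak{n}=\bigoplus_{m\geq 2}\mathfrak{g}(m)$, and its fixed-point set meets $\mathfrak{n}$ only in $\{0\}$, so it can never satisfy (\ref{visible:s1}) for a nontrivial slice.

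The repair, which is precisely what the paper does in Section \ref{sec:visible-n}, is to take $\sigma$ to be the conjugation with respect to a \emph{normal (split) real form} $\mathfrak{g}_{\mathbb{R}}$ chosen so that $\mathfrak{g}_u=\mathfrak{k}_{\mathbb{R}}+\sqrt{-1}\mathfrak{p}_{\mathbb{R}}$ and $H\in\mathfrak{a}=\mathfrak{a}_{\mathbb{R}}+\sqrt{-1}\mathfrak{a}_{\mathbb{R}}$. Because all roots are real on $\mathfrak{a}_{\mathbb{R}}$, this $\sigma$ stabilizes every root space (Lemma \ref{lem:rootspace}), hence every $\mathfrak{g}(m)$, $\mathfrak{l}$, and $\mathfrak{n}$; its fixed points $\mathfrak{g}_{\alpha}\cap\mathfrak{g}_{\mathbb{R}}=\mathbb{R}E_{\alpha}$ supply the real vector space $S_0=\bigoplus_{\alpha}\mathbb{R}E_{\alpha}$ on which $\sigma$ is the identity; and the lifted automorphism $\widetilde{\sigma}(g)=\sigma g\sigma$ still stabilizes $G_u$ and $L_u$ (since $\mathfrak{g}_u$ is $\sigma$-stable), which is what makes $\widetilde{\sigma}(k)k^{-1}\in L_u$ and gives (\ref{visible:s2}). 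With that substitution your plan coincides with the paper's proof; the remaining work is indeed the bookkeeping you describe, namely matching each $(\mathfrak{l},\mathfrak{g}(2)\oplus\mathfrak{g}(3))$ against the eight multiplicity-free actions of Table \ref{table:linear} and checking that the slices of Table \ref{table:choice} sit inside $\sum_{\alpha}\mathbb{R}E_{\alpha}$. One further small caution: the multiplicity-freeness of $\mathbb{C}[\mathfrak{n}]$ in each spherical case is not simply quoted from Panyushev in the paper but is verified case by case via the explicit identification of the $L_{\mathbb{C}}$-module, and in the odd (height three) cases the module $\mathfrak{g}(2)\oplus\mathfrak{g}(3)$ must be recognized as a \emph{decomposable but not always summandwise independent} multiplicity-free space (e.g., entry (8) of Table \ref{table:linear}), so the reducible classification \cite{red} is genuinely needed, as you anticipated.
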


Therefore, Theorem \ref{thm:visible} follows 
from Theorems \ref{thm:induction thm} and \ref{thm:linear}. 

Our proof of Theorem \ref{thm:linear} applies a case-by-case analysis by using Panyushev \cite{pa1}. 
Moreover, 
we give an explicit description of a slice and an anti-holomorphic diffeomorphism 
for the $L_u$-action on $\mathfrak{n}$ when $\mathcal{O}_X$ is spherical. 

Together with the earlier results \cite{pa1,vk,vinberg}, we summarize: 

\begin{Corollary}
\label{cor:equivalent}
The following five conditions on 
a nilpotent orbit $\mathcal{O}_X$ in a complex simple Lie algebra $\mathfrak{g}$ 
are equivalent: 
\begin{enumerate}
	\renewcommand{\theenumi}{\roman{enumi}}
	\item \label{item:spherical}
	$\mathcal{O}_X$ is spherical. 
	
	\item \label{item:height}
	The height of $\mathcal{O}_X$ equals two or three. 
	
	\item \label{item:mf}
	The space of regular functions on $\mathcal{O}_X$ is multiplicity-free 
	as a representation of $G_{\mathbb{C}}=\operatorname{Int}\mathfrak{g}$. 
	
	\item \label{item:visible-n}
	The $L_u$-action on the nilpotent subalgebra $\mathfrak{n}$ is strongly visible. 
	
	\item \label{item:visible}
	The $G_u$-action on $\mathcal{O}_X$ is strongly visible. 
\end{enumerate}
\end{Corollary}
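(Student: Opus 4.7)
The plan is to prove the equivalence via the cycle $(\ref{item:spherical}) \Rightarrow (\ref{item:visible-n}) \Rightarrow (\ref{item:visible}) \Rightarrow (\ref{item:mf}) \Rightarrow (\ref{item:spherical})$, together with the separate equivalence $(\ref{item:spherical}) \Leftrightarrow (\ref{item:height})$. The latter is precisely Panyushev's classification \cite{pa1} of spherical nilpotent orbits in terms of the height of the $\mathbb{Z}$-grading $\mathfrak{g} = \bigoplus_m \mathfrak{g}(m)$ associated with the neutral element $H$, and I would simply quote it.

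Within the cycle, the implications $(\ref{item:spherical}) \Rightarrow (\ref{item:visible-n})$ and $(\ref{item:visible-n}) \Rightarrow (\ref{item:visible})$ are exactly Theorems \ref{thm:linear} and \ref{thm:induction thm} above; their composite is Theorem \ref{thm:visible}. The equivalence $(\ref{item:spherical}) \Leftrightarrow (\ref{item:mf})$ for a nilpotent orbit is a standard fact---sphericity of a quasi-affine $G_{\mathbb{C}}$-variety is equivalent to multiplicity-freeness of its algebra of regular functions---and I would cite it from Vinberg--Kac \cite{vk} and Vinberg \cite{vinberg}. This in particular delivers the closing arrow $(\ref{item:mf}) \Rightarrow (\ref{item:spherical})$.

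The only genuinely new step is $(\ref{item:visible}) \Rightarrow (\ref{item:mf})$, which also supplies the ``only if'' direction promised in the abstract. The approach is to apply Kobayashi's propagation theorem of multiplicity-freeness \cite{propagation} to the trivial $G_u$-equivariant line bundle over $\mathcal{O}_X$: strong visibility of the $G_u$-action forces the space $\mathcal{O}(\mathcal{O}_X)$ of holomorphic functions to be multiplicity-free as a $G_u$-representation, and since $\mathbb{C}[\mathcal{O}_X]$ sits inside as a $G_u$-invariant subspace, multiplicity-freeness is inherited; passing from $G_u$ to $G_{\mathbb{C}}$ via the canonical correspondence of rational and holomorphic representations then yields condition $(\ref{item:mf})$.

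The main obstacle I expect is verifying the propagation-theorem hypotheses cleanly in this setting: one must confirm that $\mathcal{O}_X$ (connected, since $G_{\mathbb{C}} = \operatorname{Int}\mathfrak{g}$ is) forms an admissible complex manifold with its $G_{\mathbb{C}}$-homogeneous complex structure, that the trivial line bundle is Hermitian $G_u$-equivariant, and that the inclusion $\mathbb{C}[\mathcal{O}_X] \hookrightarrow \mathcal{O}(\mathcal{O}_X)$ is $G_{\mathbb{C}}$-equivariant so that multiplicity-freeness in the analytic sense descends to the algebraic setting. Once these technicalities are in place, the corollary assembles itself from the cited results together with Theorems \ref{thm:induction thm} and \ref{thm:linear}.
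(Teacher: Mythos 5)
Your proposal is correct and follows essentially the same route as the paper: (i)$\Leftrightarrow$(ii) from Panyushev, (i)$\Leftrightarrow$(iii) from Vinberg--Kimelfeld and Vinberg (note the reference \cite{vk} is Vinberg--Kimelfeld, not Vinberg--Kac), (i)$\Rightarrow$(iv)$\Rightarrow$(v) from Theorems \ref{thm:linear} and \ref{thm:induction thm}, and (v)$\Rightarrow$(iii) as a special case of Kobayashi's propagation theorem applied to the trivial line bundle over $\mathcal{O}_X$. The cycle you describe is exactly the diagram the paper assembles, so nothing further is needed.
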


By spherical nilpotent orbits, 
we mean that 
it is a nilpotent orbit on which a maximal compact subgroup of $G_{\mathbb{C}}$ 
acts strongly visibly. 

Here, the height of $\mathcal{O}_X$ is defined 
by the maximum of $m\in \mathbb{Z}$ satisfying $\mathfrak{g}(m)\neq \{ 0\} $ 
(see Definition \ref{def:height}). 

The equivalence between (\ref{item:spherical}) and (\ref{item:height}) 
is proved by Panyushev \cite{pa1}. 
The equivalence 
(\ref{item:spherical}) $\Leftrightarrow$ (\ref{item:mf}) is 
due to Vinberg--Kimelfeld \cite{vk} and Vinberg \cite{vinberg}. 
The implication (\ref{item:visible}) $\Rightarrow$ (\ref{item:mf}) is a special case of 
the propagation theory of multiplicity-freeness property 
by Kobayashi \cite{triunity,mftheorem,propagation}. 
The implication (\ref{item:spherical}) $\Rightarrow$ (\ref{item:visible-n}) 
and (\ref{item:visible-n}) $\Rightarrow$ (\ref{item:visible}) hold 
by Theorems \ref{thm:linear} and \ref{thm:induction thm}, respectively. 

\begin{center}
\begin{tabular}{c@{}c@{}c@{}c@{}c@{}c}
(\ref{item:height}) & $\stackrel{\scriptsize{\mbox{\cite{pa1}}}}{\Leftrightarrow }$ & (\ref{item:spherical}) 
	& $\stackrel{\scriptsize{\mbox{\cite{vk,vinberg}}}}{\Leftrightarrow}$ & (\ref{item:mf}) \\
 & \scriptsize{Theorem \ref{thm:linear}} & \rotatebox[origin=c]{90}{$\Leftarrow$} & & \rotatebox[origin=c]{90}{$\Rightarrow$} & \scriptsize{\cite{triunity,mftheorem,propagation}} \\
 && (\ref{item:visible-n}) & $\Rightarrow$ & (\ref{item:visible}) \\
 &&& \scriptsize{Theorem \ref{thm:induction thm}} & 
\end{tabular}
\end{center}

Corollary \ref{cor:equivalent} has some connection with 
``small infinite-dimensional representations" of complex reductive Lie groups $G_{\mathbb{C}}$. 
If $\overline{\mathcal{O}_{\pi}}$ is the associated variety of an admissible representation $\pi$ 
of $G_{\mathbb{C}}$ (see \cite{vogan}), 
then the $G_u$-type in $\pi $ is asymptotically the same 
with the $G_u$-type in the space of regular functions in $\overline{\mathcal{O}_{\pi}}$ 
by \cite[Proposition 3.3]{ko}. 

This paper is organized as follows. 
In Section \ref{sec:preliminaries}, 
we review basic notion of nilpotent orbits in complex semisimple Lie algebras. 
In Section \ref{sec:visible-n}, 
we explain a key theorem for the proof of Theorem \ref{thm:linear} (Theorem \ref{thm:slice-n}), 
namely, 
properties of our choice of a slice and an anti-holomorphic diffeomorphism 
in the strongly visible $L_u$-action on $\mathfrak{n}$ if $\mathcal{O}_X$ is spherical. 
In Section \ref{sec:induction}, 
we show the induction theorem of strongly visible actions, 
namely, Theorem \ref{thm:induction thm}. 
In Section \ref{sec:proof-n}, 
we give a proof of Theorem \ref{thm:slice-n}. 

\section*{Acknowledgments}

The author would like to thank Prof. S. Kaneyuki 
for showing his interest in this work and helpful comments in the preparation of this paper. 

The main result of this paper was announced first in the Oberwolfach workshop 
``Infinite Dimensional Lie Theory'' 
organized by Prof. K.-H. Neeb, Prof. A. Pianzola, Prof. T. Ratiu, November 2010 
and in Lorentz Center Workshop ``Analysis, Geometry and Group Representations 
for Homogeneous Spaces'' 
organized by Prof. T. Nomura, Prof. G. Helminck, November 2010. 
The author would like to express deepest gratitude to the organizers 
for the opportunity to the presentations in the wonderful workshops. 



\section{Preliminaries}
\label{sec:preliminaries}

In this section, 
we review structural theories 
on nilpotent orbits in complex semisimple Lie algebras 
which is based on \cite{nilpotent}. 

\subsection{$\mathfrak{sl}_2$-triple of nilpotent orbit}
\label{subsec:H}

Let $\mathfrak{g}$ be a finite-dimensional complex semisimple Lie algebra 
and $G_{\mathbb{C}}:=\operatorname{Int}\mathfrak{g}$ 
the inner automorphism group of $\mathfrak{g}$. 
An element $X\in \mathfrak{g}$ is called a nilpotent element 
if $\operatorname{ad}(X)\in \operatorname{End}(\mathfrak{g})$ satisfies 
$\operatorname{ad}(X)^N=0$ for some $N\in \mathbb{N}$, 
and called a semisimple element 
if $\operatorname{ad}(X)$ is diagonalizable. 
We write $\mathcal{N}$ for the cone of nilpotent elements of $\mathfrak{g}$ 
and $\mathcal{S}$ for the set of semisimple elements of $\mathfrak{g}$. 

We denote by $\mathcal{O}_X=G_{\mathbb{C}}\cdot X$ 
the $G_{\mathbb{C}}$-orbit through $X\in \mathfrak{g}$. 
An orbit $\mathcal{O}_X$ is called a nilpotent (resp. semisimple) orbit 
if $X\in \mathcal{N}$ (resp. $X\in \mathcal{S})$, 
from which $\mathcal{O}_X\subset \mathcal{N}$ (resp. $\mathcal{O}_X\subset \mathcal{S}$). 
We write $\mathcal{N}^*/G_{\mathbb{C}}$ ($\mathcal{N}^*:=\mathcal{N}\setminus \{ 0\} $) 
and $\mathcal{S}/G_{\mathbb{C}}$ 
for the set of non-zero nilpotent orbits and of semisimple orbits, respectively. 

Suppose we are given $X\in \mathcal{N}^*$. 
By Jacobson--Morozov, 
there exist $H,Y\in \mathfrak{g}$ such that 
$\{ H,X,Y\}$ forms an $\mathfrak{sl}_2$-triple, namely, 
\begin{align*}
[H,X]=2X,~[H,Y]=-2Y,~[X,Y]=H. 
\end{align*}
Then, 
$H$ is called a neutral element, $X$ a nilpositive element, and $Y$ a nilnegative element. 
We remark that $H\in \mathcal{S}$ and $X,Y\in \mathcal{N}$. 

Two $\mathfrak{sl}_2$-triples $\{ H,X,Y\} ,\{ H',X',Y'\} $ in $\mathfrak{g}$ are 
said to be conjugate 
if there exists $g\in G_{\mathbb{C}}$ such that $H'=g\cdot H,~X'=g\cdot X$, and $Y'=g\cdot Y$. 
Clearly, 
two elements $X,X'\in \mathcal{N}^*$ are conjugate 
if two $\mathfrak{sl}_2$-triples $\{ H,X,Y\} ,\{ H',X',Y'\} $ in $\mathfrak{g}$ are conjugate. 
The opposite is also true by Kostant \cite{kostant}. 
Then, this gives rise to the following bijection 
\begin{align}
\mathcal{N}^*/G_{\mathbb{C}}\stackrel{\sim }{\to }
\{ \mbox{$\mathfrak{sl}_2$-triples in $\mathfrak{g}$}\} /G_{\mathbb{C}},\quad 
\mathcal{O}_X\mapsto \{ H,X,Y\} . 
\label{eq:kostant}
\end{align}
Moreover, 
it follows from Mal'cev \cite{malcev} that 
$\{ H,X,Y\} $ and $\{ H',X',Y'\} $ are conjugate 
if the corresponding neutral elements $H,H'$ are conjugate. 
This implies that the map 
\begin{align}
\{ \mbox{$\mathfrak{sl}_2$-triples in $\mathfrak{g}$}\} /G_{\mathbb{C}}
\to  
\mathcal{S}/G_{\mathbb{C}},\quad 
\{ H,X,Y\} \mapsto \mathcal{O}_H
\label{eq:malcev}
\end{align}
is injective. 
Hence, composing (\ref{eq:kostant}) and (\ref{eq:malcev}) yields the injective map 
from the set of nilpotent orbits to that of semisimple ones: 
\begin{align}
\Phi :\mathcal{N}^*/G_{\mathbb{C}}\to 
\mathcal{S}/G_{\mathbb{C}},\quad 
\mathcal{O}_X\mapsto \mathcal{O}_H. 
\label{eq:distinguished}
\end{align}

\subsection{$\mathbb{Z}$-grading of $\mathfrak{g}$}
\label{subsec:height}

In this subsection, 
we consider the semisimple transformation $\operatorname{ad}(H)$ on $\mathfrak{g}$ 
for $H\in \Phi (\mathcal{O}_X)$. 

By the general theory on finite-dimensional representations of $\mathfrak{sl}(2,\mathbb{C})$, 
all $\operatorname{ad}(H)$-eigenvalues are integers. 
We write $\mathfrak{g}(m)$ for the $\operatorname{ad}(H)$-eigenspace 
with eigenvalue $m$, namely, 
\begin{align}
\mathfrak{g}(m):=\{ Z\in \mathfrak{g}:\operatorname{ad}(H)Z=mZ\} \quad (m\in \mathbb{Z}).
\label{eq:eigenspace}
\end{align}
Then, $\mathfrak{g}$ is decomposed into the finite sum of $\operatorname{ad}(H)$-eigenspaces as 
\begin{align}
\mathfrak{g}=\bigoplus _{m\in \mathbb{Z}}\mathfrak{g}(m). 
\label{eq:grading}
\end{align}
Since the inclusion 
$[\mathfrak{g}(m),\mathfrak{g}(n)]\subset \mathfrak{g}(m+n)$ 
holds for $m,n\in \mathbb{Z}$, 
the decomposition 
(\ref{eq:grading}) defines a $\mathbb{Z}$-grading of $\mathfrak{g}$. 

Let us take another element $H'\in \Phi (\mathcal{O}_X)$ 
and consider the $\operatorname{ad}(H')$-eigenspace $\mathfrak{g}'(m)$. 
We note that $H'=g_0\cdot H$ for some $g_0\in G_{\mathbb{C}}$. 
Then, we have: 

\begin{lemma}
\label{lem:independent}
$\mathfrak{g}'(m)=g_0\cdot \mathfrak{g}(m)=\{ g_0\cdot Z:Z\in \mathfrak{g}(m)\} $ 
for any $m\in \mathbb{Z}$. 
\end{lemma}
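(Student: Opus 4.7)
The plan is to exploit the fact that $G_{\mathbb{C}}=\operatorname{Int}\mathfrak{g}$ acts on $\mathfrak{g}$ by Lie algebra automorphisms, so the adjoint representation intertwines with the $G_{\mathbb{C}}$-action in the expected way. Concretely, for any $g\in G_{\mathbb{C}}$ and $A,B\in \mathfrak{g}$, one has $[g\cdot A,g\cdot B]=g\cdot [A,B]$, which rewritten on operators gives the conjugation identity
\begin{equation*}
\operatorname{ad}(g\cdot A)=g\circ \operatorname{ad}(A)\circ g^{-1}
\end{equation*}
as endomorphisms of $\mathfrak{g}$.

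Applying this with $g=g_0$ and $A=H$, and using $H'=g_0\cdot H$, I obtain $\operatorname{ad}(H')=g_0\circ \operatorname{ad}(H)\circ g_0^{-1}$. Consequently, $\operatorname{ad}(H')$ and $\operatorname{ad}(H)$ are conjugate linear operators, and the eigenspace of $\operatorname{ad}(H')$ for eigenvalue $m$ is precisely the image under $g_0$ of the eigenspace of $\operatorname{ad}(H)$ for the same eigenvalue $m$. This yields the containment $g_0\cdot \mathfrak{g}(m)\subset \mathfrak{g}'(m)$ directly: if $Z\in \mathfrak{g}(m)$, then
\begin{equation*}
\operatorname{ad}(H')(g_0\cdot Z)=g_0\cdot \operatorname{ad}(H)Z=m(g_0\cdot Z).
\end{equation*}

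For the reverse inclusion, I would either run the same argument with $g_0^{-1}$ in place of $g_0$ (using $H=g_0^{-1}\cdot H'$), or observe that since $g_0$ is an invertible linear map, $\dim g_0\cdot \mathfrak{g}(m)=\dim \mathfrak{g}(m)$, and then compare the two direct sum decompositions of $\mathfrak{g}$ obtained from (\ref{eq:grading}) applied to $H$ and to $H'$ to force equality at each graded piece. No step here is a real obstacle; the whole content is the naturality of eigenspace decompositions under automorphisms, and the only thing to be a little careful about is making the identification $G_{\mathbb{C}}\subset \operatorname{Aut}(\mathfrak{g})$ explicit so that writing $g\cdot [A,B]=[g\cdot A,g\cdot B]$ is clearly justified.
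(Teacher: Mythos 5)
Your proposal is correct and follows essentially the same route as the paper: both rest on the identity $g_0\cdot [H,Z]=[g_0\cdot H,g_0\cdot Z]$ (equivalently $\operatorname{ad}(H')=g_0\circ\operatorname{ad}(H)\circ g_0^{-1}$) to show $g_0\cdot Z$ is an $\operatorname{ad}(H')$-eigenvector with eigenvalue $m$, and both obtain the reverse inclusion by running the same argument with $g_0^{-1}$.
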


\begin{proof}
For $Z\in \mathfrak{g}(m)$, 
we observe $g_0\cdot (\operatorname{ad}(H)Z)$ as follows. 
First, 
by the definition of $\mathfrak{g}(m)$, 
we have 
\begin{align}
g_0\cdot (\operatorname{ad}(H)Z)=g_0\cdot (mZ)=m(g_0\cdot Z). 
\label{eq:adH-1}
\end{align}
Second, in view of $g_0\cdot [H,Z]=[g_0\cdot H,g_0\cdot Z]$, we express as 
\begin{align}
g_0\cdot (\operatorname{ad}(H)Z)
=\operatorname{ad}(g_0\cdot H)(g_0\cdot Z)
=\operatorname{ad}(H')(g_0\cdot Z). 
\label{eq:adH-2}
\end{align}
Comparing (\ref{eq:adH-1}) and (\ref{eq:adH-2}), 
we obtain $\operatorname{ad}(H')(g_0\cdot Z)=m(g_0\cdot Z)$, 
from which $g_0\cdot Z\in \mathfrak{g}'(m)$. 
Hence, we have shown $g_0\cdot \mathfrak{g}(m)\subset \mathfrak{g}'(m)$. 

Similarly, we have $g_0^{-1}\cdot \mathfrak{g}'(m)\subset \mathfrak{g}(m)$, and then 
$\mathfrak{g}'(m)\subset g_0\cdot \mathfrak{g}(m)$. 

Therefore, we have proved $\mathfrak{g}'(m)=g_0\cdot \mathfrak{g}(m)$. 
\end{proof}

Lemma \ref{lem:independent} shows that 
the $\mathbb{Z}$-grading (\ref{eq:grading}) is determined by $\mathcal{O}_X$, 
particularly, 
independent on the choice of semisimple elements of $\Phi (\mathcal{O}_X)$.

%
%
%

\subsection{Nilpotent subalgebra}
\label{subsec:nilpotent-subalgebra}

We define a parabolic subalgebra $\mathfrak{q}$ 
arising from the $\mathbb{Z}$-grading (\ref{eq:grading}) by 
\begin{align}
\mathfrak{q}:=\bigoplus _{m\geq 0}\mathfrak{g}(m) 
\label{eq:parabolic}
\end{align}
with Levi decomposition $\mathfrak{q}=\mathfrak{l}+\mathfrak{u}$ 
where 
\begin{align}
\mathfrak{l}:=\mathfrak{z}_{\mathfrak{g}}(H)=\mathfrak{g}(0) 
\label{eq:levi}
\end{align}
is a Levi subalgebra 
and $\mathfrak{u}=\bigoplus _{m>0}\mathfrak{g}(m)$ is a nilradical. 
Then, $\mathfrak{l}$ is a complex reductive Lie algebra. 

Let $\mathfrak{n}$ be a nilpotent subalgebra by 
\begin{align}
\mathfrak{n}:=\bigoplus _{m\geq 2}\mathfrak{g}(m). 
\label{eq:nilpotent-subalgebra}
\end{align}
Clearly, $[\mathfrak{q},\mathfrak{n}]\subset \mathfrak{n}$. 
As $X\in \mathfrak{g}(2)$, 
we have $[\mathfrak{q},X]\subset \mathfrak{n}$. 
Further, the opposite inclusion also holds 
by the representation theory of $\mathfrak{sl}(2,\mathbb{C})$, 
from which we obtain: 

\begin{lemma}
\label{lem:n}
$\mathfrak{n}=[\mathfrak{q},X]$. 
\end{lemma}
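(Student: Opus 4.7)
The plan is to prove the two inclusions separately. The inclusion $[\mathfrak{q},X]\subset \mathfrak{n}$ is the easy direction and is essentially noted in the excerpt: since $X\in \mathfrak{g}(2)$, since $\mathfrak{q}=\bigoplus_{m\geq 0}\mathfrak{g}(m)$, and since the decomposition is a $\mathbb{Z}$-grading (so $[\mathfrak{g}(m),\mathfrak{g}(2)]\subset \mathfrak{g}(m+2)$), I immediately get $[\mathfrak{q},X]\subset \bigoplus_{m\geq 2}\mathfrak{g}(m)=\mathfrak{n}$.

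The substance lies in the reverse inclusion $\mathfrak{n}\subset [\mathfrak{q},X]$. For this I would regard $\mathfrak{g}$ as a finite-dimensional $\mathfrak{sl}(2,\mathbb{C})$-module via the adjoint action of the triple $\{H,X,Y\}$. Then the $\operatorname{ad}(H)$-eigenspace decomposition $\mathfrak{g}=\bigoplus_{m\in \mathbb{Z}}\mathfrak{g}(m)$ is precisely the weight-space decomposition, and $\operatorname{ad}(X)$ acts as the raising operator, shifting weights by $+2$.

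The key reduction is the claim that for every $m\geq 0$ the map
\[
\operatorname{ad}(X)\colon \mathfrak{g}(m)\longrightarrow \mathfrak{g}(m+2)
\]
is \emph{surjective}. Granting this for a moment, any $m\geq 2$ satisfies $m-2\geq 0$, so $\mathfrak{g}(m-2)\subset \mathfrak{q}$, and hence
\[
\mathfrak{g}(m)=\operatorname{ad}(X)(\mathfrak{g}(m-2))\subset [\mathfrak{q},X].
\]
Summing over $m\geq 2$ yields $\mathfrak{n}\subset [\mathfrak{q},X]$, as required.

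To establish the surjectivity claim, I decompose $\mathfrak{g}$ into irreducible $\mathfrak{sl}_2$-summands and work summand by summand. In an irreducible summand of highest weight $d$, the weight-$k$ space is one-dimensional for $k\in \{-d,-d+2,\dots ,d\}$ and zero otherwise, and the standard structure of irreducible $\mathfrak{sl}_2$-modules tells us that the raising operator $\operatorname{ad}(X)$ annihilates only the highest weight line. Consequently, restricted to a weight-$m$ space with $m\geq 0$, the map $\operatorname{ad}(X)$ either lands in the zero space (when $m\geq d$) or is a linear map between one-dimensional spaces which, being non-zero by the highest weight characterization, is an isomorphism. Either way it is surjective, and summing over all irreducible components gives the desired surjectivity in $\mathfrak{g}$. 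The only real content of the argument is this standard $\mathfrak{sl}_2$-representation-theoretic fact; beyond invoking it, there is no serious obstacle, and the lemma reduces to a clean bookkeeping of weights.
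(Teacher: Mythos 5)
Your proof is correct and follows the same route the paper takes: the inclusion $[\mathfrak{q},X]\subset\mathfrak{n}$ from the grading, and the reverse inclusion from the surjectivity of $\operatorname{ad}(X)\colon\mathfrak{g}(m)\to\mathfrak{g}(m+2)$ for $m\geq 0$, which the paper simply attributes to ``the representation theory of $\mathfrak{sl}(2,\mathbb{C})$'' without further detail. You have merely filled in the standard $\mathfrak{sl}_2$-module argument that the paper leaves implicit, and it is done correctly.
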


\subsection{Realization of $\mathcal{O}_X$ via momentum map}
\label{subsec:realization}

Let $Q_{\mathbb{C}}$ be a parabolic subgroup of $G_{\mathbb{C}}$ 
with Lie algebra $\mathfrak{q}$, 
which acts on $\mathfrak{n}$. 
We set 
\begin{align}
\mathfrak{n}^{\circ}:=Q_{\mathbb{C}}\cdot X. 
\label{eq:regular}
\end{align}
By Lemma \ref{lem:n}, 
$\mathfrak{n}^{\circ}$ is an open set in $\mathfrak{n}$, 
in particular, its closure $\overline{\mathfrak{n}^{\circ}}$ is equal to $\mathfrak{n}$. 

We define a $G_{\mathbb{C}}$-equivariant smooth map $\varphi$ 
from the holomorphic vector bundle 
$G_{\mathbb{C}}\times _{Q_{\mathbb{C}}}\mathfrak{n}$ on the flag manifold 
$G_{\mathbb{C}}/Q_{\mathbb{C}}$ to $\mathfrak{g}$ by 
\begin{align}
\varphi :G_{\mathbb{C}}\times _{Q_{\mathbb{C}}}\mathfrak{n}\to \mathfrak{g},~
(x,Z)\mapsto x\cdot Z. 
\label{eq:regular-map}
\end{align}

\begin{lemma}
\label{lem:another realization}
$\mathcal{O}_X=\varphi (G_{\mathbb{C}}\times _{Q_{\mathbb{C}}}\mathfrak{n}^{\circ})$. 
\end{lemma}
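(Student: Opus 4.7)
The plan is to prove the equality by a straightforward double inclusion, unpacking the definition of $\mathfrak{n}^{\circ}=Q_{\mathbb{C}}\cdot X$ from (\ref{eq:regular}) and the definition of $\varphi$ in (\ref{eq:regular-map}). The key observation is that $\mathfrak{n}^\circ$ is, by construction, a single $Q_{\mathbb{C}}$-orbit (so in particular $Q_{\mathbb{C}}$-stable, which legitimizes forming the associated bundle $G_{\mathbb{C}}\times_{Q_{\mathbb{C}}}\mathfrak{n}^{\circ}$), and that $X$ itself lies in $\mathfrak{n}^{\circ}$.

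For the inclusion $\varphi(G_{\mathbb{C}}\times_{Q_{\mathbb{C}}}\mathfrak{n}^{\circ})\subset \mathcal{O}_X$, I would take an arbitrary class $[g,Z]$ with $g\in G_{\mathbb{C}}$ and $Z\in \mathfrak{n}^{\circ}$, write $Z=q\cdot X$ for some $q\in Q_{\mathbb{C}}$ using (\ref{eq:regular}), and conclude $\varphi(g,Z)=g\cdot Z=(gq)\cdot X\in G_{\mathbb{C}}\cdot X=\mathcal{O}_X$.

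For the reverse inclusion $\mathcal{O}_X\subset \varphi(G_{\mathbb{C}}\times_{Q_{\mathbb{C}}}\mathfrak{n}^{\circ})$, I would note that $X=e\cdot X\in Q_{\mathbb{C}}\cdot X=\mathfrak{n}^{\circ}$, so for any $g\in G_{\mathbb{C}}$ the pair $(g,X)$ descends to a class in $G_{\mathbb{C}}\times_{Q_{\mathbb{C}}}\mathfrak{n}^{\circ}$, and $\varphi(g,X)=g\cdot X$ exhausts $\mathcal{O}_X$ as $g$ varies.

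There is really no obstacle here; the statement is essentially a tautological repackaging of the definitions. The only subtlety worth flagging is the well-definedness of the associated bundle $G_{\mathbb{C}}\times_{Q_{\mathbb{C}}}\mathfrak{n}^{\circ}$, which is immediate because $\mathfrak{n}^{\circ}$ is a $Q_{\mathbb{C}}$-orbit and hence $Q_{\mathbb{C}}$-invariant. This lemma is a preparatory identification that will be used later (combined with Lemma \ref{lem:n} giving $\overline{\mathfrak{n}^{\circ}}=\mathfrak{n}$) to realize $\overline{\mathcal{O}_X}$ as the image of the full bundle $G_{\mathbb{C}}\times_{Q_{\mathbb{C}}}\mathfrak{n}$, thereby setting up the induction picture (\ref{eq:induction}) on which Theorem \ref{thm:induction thm} relies.
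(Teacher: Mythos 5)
Your proof is correct and is essentially the paper's own argument: the paper simply computes $\varphi(G_{\mathbb{C}}\times_{Q_{\mathbb{C}}}\mathfrak{n}^{\circ})=\varphi(G_{\mathbb{C}}\times_{Q_{\mathbb{C}}}(Q_{\mathbb{C}}\cdot X))=G_{\mathbb{C}}\cdot X=\mathcal{O}_X$ in one line, and your two inclusions just unpack that same computation. The remark on well-definedness of the associated bundle is a harmless extra observation.
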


\begin{proof}
It follows from (\ref{eq:regular}) that 
\begin{align}
\varphi (G_{\mathbb{C}}\times _{Q_{\mathbb{C}}}\mathfrak{n}^{\circ})
=\varphi (G_{\mathbb{C}}\times _{Q_{\mathbb{C}}}(Q_{\mathbb{C}}\cdot X))
=G_{\mathbb{C}}\cdot X=\mathcal{O}_X. 
\label{eq:realization-cpx}
\end{align}
Hence, Lemma \ref{lem:another realization} has been proved. 
\end{proof}

Next, 
let $L_{\mathbb{C}}$ be a Levi subgroup of $Q_{\mathbb{C}}$ with Lie algebra $\mathfrak{l}$. 
Taking a conjugation if necessary, 
we may and do assume that 
$L_u:=L_{\mathbb{C}}\cap G_u$ is a compact real form of $L_{\mathbb{C}}$. 
Then, the inclusion map $G_u\hookrightarrow G_{\mathbb{C}}$ 
induces a biholomorphic diffeomorphism $G_u/L_u\simeq G_{\mathbb{C}}/Q_{\mathbb{C}}$. 
This gives rise to an isomorphism as a $G_u$-equivariant holomorphic 
vector bundle as follows: 
\begin{align}
G_u\times _{L_u}\mathfrak{n}
\simeq G_{\mathbb{C}}\times _{Q_{\mathbb{C}}}\mathfrak{n}. 
\label{eq:bundle-isom}
\end{align}
We use the same letter $\varphi$ to denote the $G_u$-equivariant map 
$G_u\times _{L_u}\mathfrak{n}\to \mathfrak{g}$ via the isomorphism (\ref{eq:bundle-isom}). 
Then, we have: 

\begin{proposition}
\label{prop:another realization}
$\mathcal{O}_X=G_u\cdot \mathfrak{n}^{\circ}$. 
\end{proposition}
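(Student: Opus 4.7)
The plan is to deduce this directly from Lemma \ref{lem:another realization} by transferring the realization of $\mathcal{O}_X$ from the complex bundle $G_{\mathbb{C}}\times_{Q_{\mathbb{C}}}\mathfrak{n}$ to the compact one $G_u\times_{L_u}\mathfrak{n}$ via the isomorphism (\ref{eq:bundle-isom}), and then restricting to the open set $\mathfrak{n}^{\circ}$.

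First I would observe that $\mathfrak{n}^{\circ}=Q_{\mathbb{C}}\cdot X$ is $Q_{\mathbb{C}}$-stable by construction, and hence in particular $L_u$-stable since $L_u\subset L_{\mathbb{C}}\subset Q_{\mathbb{C}}$. Therefore the restricted associated bundles $G_u\times_{L_u}\mathfrak{n}^{\circ}$ and $G_{\mathbb{C}}\times_{Q_{\mathbb{C}}}\mathfrak{n}^{\circ}$ are well-defined open subsets of $G_u\times_{L_u}\mathfrak{n}$ and $G_{\mathbb{C}}\times_{Q_{\mathbb{C}}}\mathfrak{n}$ respectively, and the biholomorphism (\ref{eq:bundle-isom}) restricts to a biholomorphism between them.

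Next, I would use that $\varphi$ is defined on both bundles by the same formula $[g,Z]\mapsto g\cdot Z\in\mathfrak{g}$, so under the identification of the two bundles the two incarnations of $\varphi$ compute the same subset of $\mathfrak{g}$. Explicitly, by the transitivity of the $G_u$-action on $G_{\mathbb{C}}/Q_{\mathbb{C}}$ (which is exactly what makes (\ref{eq:bundle-isom}) an isomorphism), every $x\in G_{\mathbb{C}}$ decomposes as $x=kq$ with $k\in G_u$, $q\in Q_{\mathbb{C}}$, and then $x\cdot Z=k\cdot(q\cdot Z)\in G_u\cdot\mathfrak{n}^{\circ}$ whenever $Z\in\mathfrak{n}^{\circ}$, since $q\cdot Z$ lies in the $Q_{\mathbb{C}}$-stable set $\mathfrak{n}^{\circ}$. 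This shows $\varphi(G_{\mathbb{C}}\times_{Q_{\mathbb{C}}}\mathfrak{n}^{\circ})\subset G_u\cdot\mathfrak{n}^{\circ}$, and the reverse inclusion is immediate because $G_u\subset G_{\mathbb{C}}$.

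Combining these observations with Lemma \ref{lem:another realization} yields
\[
G_u\cdot\mathfrak{n}^{\circ}=\varphi(G_u\times_{L_u}\mathfrak{n}^{\circ})=\varphi(G_{\mathbb{C}}\times_{Q_{\mathbb{C}}}\mathfrak{n}^{\circ})=\mathcal{O}_X,
\]
which is the desired equality. There is essentially no hard step here; the only thing to be careful about is noting that $\mathfrak{n}^{\circ}$ is $L_u$-invariant so that the compact associated bundle over the open subset actually makes sense and agrees with the complex one under (\ref{eq:bundle-isom}).
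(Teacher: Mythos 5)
Your proof is correct and follows essentially the same route as the paper: combine Lemma \ref{lem:another realization} with the bundle isomorphism (\ref{eq:bundle-isom}) and evaluate $\varphi$ on $G_u\times_{L_u}\mathfrak{n}^{\circ}$. The only difference is that you spell out the detail the paper leaves implicit, namely that $\mathfrak{n}^{\circ}$ is $Q_{\mathbb{C}}$-stable and that the decomposition $G_{\mathbb{C}}=G_uQ_{\mathbb{C}}$ lets one absorb the $Q_{\mathbb{C}}$-part into $\mathfrak{n}^{\circ}$, which is a welcome clarification rather than a new argument.
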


\begin{proof}
By Lemma \ref{lem:another realization} and (\ref{eq:bundle-isom}), 
we have 
\begin{align*}
\mathcal{O}_X=\varphi (G_u\times _{L_u}\mathfrak{n}^{\circ})
=G_u\cdot \mathfrak{n}^{\circ}, 
\end{align*}
from which Proposition \ref{prop:another realization} has been proved. 
\end{proof}

\begin{rem}[cf. {\cite[Theorem 20]{mftheorem}}]
We can regard $\varphi$ as the restriction of the momentum map 
of the Hamiltonian $G_u$-action on the flag manifold $G_u/L_u$. 

Let $\mathfrak{g}_u$ and $\mathfrak{l}_u$ be the Lie algebras of $G_u$ and $L_u$, 
respectively. 
Identifying $\mathfrak{g}_u/\mathfrak{l}_u$ with $\mathfrak{u}$, 
the nilpotent subalgebra $\mathfrak{n}$ seems to be an $L_u$-submodule of 
$\mathfrak{g}_u/\mathfrak{l}_u$. 
Then, the principal vector bundle $G_u\times _{L_u}(\mathfrak{g}_u/\mathfrak{l}_u)$ 
is isomorphic to the cotangent bundle $T^*(G_u/L_u)$. 
Via the identification of $\mathfrak{g}$ with the dual space $\mathfrak{g}^*$ 
by the Killing form on $\mathfrak{g}$, 
the map 
\begin{align*}
\psi :G_u\times _{L_u}\mathfrak{u}\to \mathfrak{g},\quad (g,Z)\mapsto g\cdot Z
\end{align*}
is essentially a momentum map of the Hamiltonian $G_u$-action on the flag manifold $G_u/L_u$. 
It turns out that 
$\varphi =\psi |_{G_u\times _{L_u}\mathfrak{n}}$. 
\end{rem}



\section{Visible actions on nilpotent subalgebras}
\label{sec:visible-n}

Let us retain the setting of Section \ref{sec:preliminaries}. 
A nilpotent orbit $\mathcal{O}_X$ is written as 
$\mathcal{O}_X=G_u\cdot \mathfrak{n}^{\circ}$. 
In view of this realization, 
it is crucial for the study on the $G_u$-action on $\mathcal{O}_X$ 
to understand the $L_u$-action on $\mathfrak{n}^{\circ}$. 
Here, the $Q_{\mathbb{C}}$-orbit $\mathfrak{n}^{\circ}$ through $X$ 
is equal to the closure of 
the nilpotent subalgebra $\mathfrak{n}$. 
In this section, 
we focus on the $L_u$-action on $\mathfrak{n}$.

\subsection{Normal real form of complex simple Lie algebra}

First, 
we define an anti-holomorphic involution of a complex simple Lie algebra. 

For a reductive Lie algebra $\mathfrak{g}'$, 
we denote by $\operatorname{rank}_{\mathbb{R}}\mathfrak{g}'$ 
the real rank of $\mathfrak{g}'$. 
A real form $\mathfrak{g}'_{\mathbb{R}}$ of a complex reductive Lie algebra $\mathfrak{g}'$ 
is called \textit{normal} if $\operatorname{rank}_{\mathbb{R}}\mathfrak{g}'_{\mathbb{R}}
=\operatorname{rank}\mathfrak{g}'$. 
Normal real forms of a complex simple Lie algebra exist 
and are unique up to isomorphism. 

Let $\mathfrak{g}$ be a complex simple Lie algebra. 
We take a normal real form $\mathfrak{g}_{\mathbb{R}}$ of $\mathfrak{g}$. 
We denote by $\sigma $ the complex conjugation of $\mathfrak{g}$ 
with respect to $\mathfrak{g}_{\mathbb{R}}$, namely, 
\begin{align}
\sigma (X+\sqrt{-1}Y)=X-\sqrt{-1}Y\quad (X,Y\in \mathfrak{g}_{\mathbb{R}}). 
\label{eq:sigma}
\end{align}

Let $\mathfrak{k}_{\mathbb{R}}$ be a maximal compact subalgebra of $\mathfrak{g}_{\mathbb{R}}$ 
and $\mathfrak{g}_{\mathbb{R}}=
\mathfrak{k}_{\mathbb{R}}+\mathfrak{p}_{\mathbb{R}}$ 
the corresponding Cartan decomposition. 
Then, the Lie algebra 
$\mathfrak{k}_{\mathbb{R}}+\sqrt{-1}\mathfrak{p}_{\mathbb{R}}$ 
is a $\sigma$-stable compact real form of $\mathfrak{g}$. 
Since compact real forms are unique up to isomorphism, 
we may and do assume that 
$\mathfrak{g}_u:=\mathfrak{k}_{\mathbb{R}}+\sqrt{-1}\mathfrak{p}_{\mathbb{R}}$ 
is the Lie algebra of $G_u$ 
by taking a conjugation of $\mathfrak{g}_{\mathbb{R}}$ if necessary. 

\subsection{$\sigma$-stability of subalgebra}
\label{subsec:stable}

Let $\mathfrak{a}_{\mathbb{R}}$ be a maximal abelian subspace 
in $\mathfrak{p}_{\mathbb{R}}$ and 
\begin{align}
\mathfrak{a}:=\mathfrak{a}_{\mathbb{R}}+\sqrt{-1}\mathfrak{a}_{\mathbb{R}}. 
\label{eq:cartan}
\end{align}
Then, 
$\mathfrak{a}$ is $\sigma $-stable, 
and a Cartan subalgebra of $\mathfrak{g}$
since $\operatorname{rank}_{\mathbb{R}}\mathfrak{g}_{\mathbb{R}}
=\operatorname{rank}\mathfrak{g}$. 
Then, $\mathfrak{a}_{\mathbb{R}}$ is 
a Cartan subalgebra of $\mathfrak{g}_{\mathbb{R}}$. 

Let $\Delta \equiv \Delta (\mathfrak{g},\mathfrak{a})$ 
be a root system of $\mathfrak{g}$ 
with respect to $\mathfrak{a}$. 
We denote by $\mathfrak{g}_{\alpha }$ the root space corresponding to $\alpha \in \Delta$. 
Then, we have: 

\begin{lemma}
\label{lem:rootspace}
The root space $\mathfrak{g}_{\alpha }$ is $\sigma$-stable 
for any $\alpha \in \Delta $. 
\end{lemma}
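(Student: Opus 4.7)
The plan is to show directly that $\sigma(\mathfrak{g}_\alpha)\subset \mathfrak{g}_\alpha$ by computing how $\operatorname{ad}(H)$ acts on $\sigma(X)$ for $H\in\mathfrak{a}$ and $X\in\mathfrak{g}_\alpha$, and exploiting the behavior of $\sigma$ on the two pieces of the decomposition $\mathfrak{a}=\mathfrak{a}_{\mathbb{R}}+\sqrt{-1}\mathfrak{a}_{\mathbb{R}}$.

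First I would record the preliminary fact that since $\mathfrak{g}_{\mathbb{R}}$ is a normal (split) real form and $\mathfrak{a}_{\mathbb{R}}\subset\mathfrak{p}_{\mathbb{R}}$, the operator $\operatorname{ad}(H)$ for $H\in\mathfrak{a}_{\mathbb{R}}$ is $\mathbb{R}$-diagonalizable on $\mathfrak{g}_{\mathbb{R}}$; consequently every root $\alpha\in\Delta$ takes \emph{real} values on $\mathfrak{a}_{\mathbb{R}}$, and hence purely imaginary values on $\sqrt{-1}\mathfrak{a}_{\mathbb{R}}$. Second, from the very definition (\ref{eq:sigma}) of $\sigma$, one has $\sigma|_{\mathfrak{a}_{\mathbb{R}}}=\mathrm{id}$ and $\sigma|_{\sqrt{-1}\mathfrak{a}_{\mathbb{R}}}=-\mathrm{id}$, and $\sigma$ is a conjugate-linear Lie algebra automorphism, i.e.\ $\sigma([Z,W])=[\sigma(Z),\sigma(W)]$ and $\sigma(cZ)=\bar{c}\,\sigma(Z)$ for $c\in\mathbb{C}$.

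Given $X\in\mathfrak{g}_\alpha$ and an arbitrary $H\in\mathfrak{a}$, I would apply $\sigma$ to the defining relation $[H,X]=\alpha(H)X$ to obtain
\begin{equation*}
[\sigma(H),\sigma(X)]=\sigma([H,X])=\overline{\alpha(H)}\,\sigma(X).
\end{equation*}
Splitting $H=H_1+\sqrt{-1}H_2$ with $H_1,H_2\in\mathfrak{a}_{\mathbb{R}}$, we have $\sigma(H)=H_1-\sqrt{-1}H_2$, while $\alpha(H)=\alpha(H_1)+\sqrt{-1}\alpha(H_2)$ with $\alpha(H_1),\alpha(H_2)\in\mathbb{R}$. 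Thus $\overline{\alpha(H)}=\alpha(H_1)-\sqrt{-1}\alpha(H_2)=\alpha(\sigma(H))$, and the displayed equation becomes
\begin{equation*}
[\sigma(H),\sigma(X)]=\alpha(\sigma(H))\,\sigma(X).
\end{equation*}
Since $H\mapsto\sigma(H)$ is a bijection of $\mathfrak{a}$ onto itself, this says exactly that $\sigma(X)\in\mathfrak{g}_\alpha$, which proves $\sigma$-stability.

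There is essentially no serious obstacle; the only point that needs care is the reality of $\alpha$ on $\mathfrak{a}_{\mathbb{R}}$, which is where the hypothesis that $\mathfrak{g}_{\mathbb{R}}$ is a \emph{normal} real form (so that $\mathfrak{a}_{\mathbb{R}}\subset\mathfrak{p}_{\mathbb{R}}$ is simultaneously a Cartan subalgebra of $\mathfrak{g}_{\mathbb{R}}$) genuinely enters. Once this is in place, the verification is a bookkeeping exercise balancing the conjugate-linearity of $\sigma$ against the complex conjugation of the eigenvalue.
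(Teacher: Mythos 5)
Your proof is correct and follows essentially the same route as the paper's: apply the conjugate-linear automorphism $\sigma$ to the eigenvalue relation $[H,X]=\alpha(H)X$, use the reality of roots on $\mathfrak{a}_{\mathbb{R}}$ to identify $\overline{\alpha(H)}$ with $\alpha(\sigma(H))$, and then substitute $H\mapsto\sigma(H)$ to conclude $\sigma(X)\in\mathfrak{g}_\alpha$. The only (harmless) difference is that you supply your own justification for the reality of the roots on $\mathfrak{a}_{\mathbb{R}}$ where the paper cites Knapp.
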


\begin{proof}
According to (\ref{eq:cartan}), 
we write $A\in \mathfrak{a}$ as $A=A_1+\sqrt{-1}A_2~(A_1,A_2\in \mathfrak{a}_{\mathbb{R}})$. 
As $\mathfrak{a}_{\mathbb{R}}\subset \mathfrak{g}_{\mathbb{R}}$, 
we have $\sigma (A)=A_1-\sqrt{-1}A_2$. Hence, 
\begin{align*}
\alpha (\sigma (A))
&=\alpha (A_1-\sqrt{-1}A_2)
=\alpha (A_1)-\sqrt{-1}\alpha (A_2). 
\end{align*}
We remark that 
all of the roots are real on $\mathfrak{a}_{\mathbb{R}}$ 
(cf. \cite[Section 4]{knapp}). 
This means that $\alpha (A_1),\alpha (A_2)\in \mathbb{R}$, and then, 
$\alpha (\sigma (A))=\alpha (A_1)-\sqrt{-1}\alpha (A_2)=\overline{\alpha (A)}$. 
Since $\sigma$ is anti-linear, we have 
\begin{align}
\sigma (\alpha (A)Z)
&=\overline{\alpha (A)}\sigma (Z) 
=\alpha (\sigma (A))\sigma (Z)\quad (Z\in \mathfrak{g}). 
\label{eq:root}
\end{align}

Let $Z_{\alpha }$ be a root vector of $\mathfrak{g}_{\alpha }$. 
It follows from (\ref{eq:root}) that 
\begin{align}
\label{eq:rootspace1}
\sigma ([A,Z_{\alpha }])&=\sigma (\alpha (A)Z_{\alpha }) 
=\alpha (\sigma (A))\sigma (Z_{\alpha }). 
\end{align}
On the other hand, 
we have $\sigma ([A,Z_{\alpha }])=[\sigma (A),\sigma (Z)]$ 
since $\sigma $ is an involution on $\mathfrak{g}$. 
Replacing $A$ with $\sigma (A)$ in the equality (\ref{eq:rootspace1}) shows 
\begin{align}
[A,\sigma (Z_{\alpha })]=\alpha (A)\sigma (Z_{\alpha }). 
\end{align}
This means that 
$\sigma (Z_{\alpha })$ lies in $\mathfrak{g}_{\alpha}$. 
Hence, Lemma \ref{lem:rootspace} has been proved. 
\end{proof}

We recall the well-known facts that 
all elements contained in a Cartan subalgebra are semisimple and 
that two Cartan subalgebras of $\mathfrak{g}$ are conjugate by $G_{\mathbb{C}}$. 
This means that $\Phi (\mathcal{O}_X)\cap \mathfrak{a}\neq \emptyset$. 
Hence, 
we take a semisimple element $H$ in $\Phi (\mathcal{O}_X)\cap \mathfrak{a}$. 
In this setting, we have: 

\begin{lemma}
\label{lem:eigenspace}
The $\operatorname{ad}(H)$-eigenspace $\mathfrak{g}(m)$ is $\sigma$-stable 
for any $m\in \mathbb{Z}$. 
\end{lemma}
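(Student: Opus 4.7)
The plan is to reduce the lemma to Lemma \ref{lem:rootspace} together with the obvious $\sigma$-stability of $\mathfrak{a}$, by showing that $\mathfrak{g}(m)$ decomposes as a direct sum of such subspaces. Since $\mathfrak{a}$ is abelian and contains $H$, one has $\mathfrak{a}\subset \mathfrak{g}(0)$; and for every root $\alpha \in \Delta$, the root space $\mathfrak{g}_{\alpha}$ is an $\operatorname{ad}(H)$-eigenspace with eigenvalue $\alpha(H)$. The representation theory of $\mathfrak{sl}(2,\mathbb{C})$ applied to the $\mathfrak{sl}_2$-triple $\{H,X,Y\}$ forces every $\operatorname{ad}(H)$-eigenvalue on $\mathfrak{g}$ to be an integer, so in particular $\alpha(H)\in \mathbb{Z}$ for each $\alpha$. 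Combining this with the root space decomposition $\mathfrak{g} = \mathfrak{a} \oplus \bigoplus_{\alpha\in \Delta} \mathfrak{g}_{\alpha}$, I would obtain
\[
\mathfrak{g}(m) \;=\; \delta_{m,0}\,\mathfrak{a} \;\oplus\; \bigoplus_{\alpha\in \Delta,\,\alpha(H)=m} \mathfrak{g}_{\alpha},
\]
where $\delta_{m,0}$ denotes the Kronecker symbol.

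Each summand on the right is $\sigma$-stable: the Cartan subalgebra $\mathfrak{a} = \mathfrak{a}_{\mathbb{R}} + \sqrt{-1}\,\mathfrak{a}_{\mathbb{R}}$ is preserved by the complex conjugation $\sigma$ from (\ref{eq:sigma}) by definition, while each root space $\mathfrak{g}_{\alpha}$ is $\sigma$-stable by Lemma \ref{lem:rootspace}. Since a direct sum of $\sigma$-stable subspaces is again $\sigma$-stable, this would yield the desired conclusion. There is no serious obstacle here; the argument is essentially a bookkeeping exercise on top of Lemma \ref{lem:rootspace}, once one knows that $H$ centralizes $\mathfrak{a}$ and that $\alpha(H)$ is an integer for every $\alpha$.

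A computational alternative, which I might mention as a sanity check, is to verify $\sigma(H)=H$ directly: writing $H = H_1 + \sqrt{-1}\,H_2$ with $H_j\in \mathfrak{a}_{\mathbb{R}}$, the integrality of $\alpha(H) = \alpha(H_1) + \sqrt{-1}\,\alpha(H_2)$ combined with $\alpha(H_j)\in \mathbb{R}$ forces $\alpha(H_2)=0$ for all $\alpha\in \Delta$, hence $H_2=0$ and $H\in \mathfrak{a}_{\mathbb{R}}\subset \mathfrak{g}_{\mathbb{R}}$. Anti-linearity of $\sigma$ then turns $[H,Z]=mZ$ for $Z\in \mathfrak{g}(m)$ into $[H,\sigma(Z)] = m\sigma(Z)$, giving $\sigma(\mathfrak{g}(m))\subset \mathfrak{g}(m)$ without any appeal to the root space decomposition.
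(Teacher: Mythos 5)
Your main argument is correct and is essentially the paper's own proof: both decompose $\mathfrak{g}(m)$ into root spaces (with $\mathfrak{a}$ absorbed into $\mathfrak{g}(0)$) and invoke Lemma \ref{lem:rootspace} together with the $\sigma$-stability of $\mathfrak{a}$. Your alternative check via $\sigma(H)=H$ is also valid (the integrality of $\alpha(H)$ does force $H\in\mathfrak{a}_{\mathbb{R}}$), but it is a bonus rather than a different route.
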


\begin{proof}
Since 
$[H,Z_{\alpha }]=\alpha (H)Z_{\alpha}$ holds for any $Z_{\alpha }\in \mathfrak{g}_{\alpha }$, 
we have $\mathfrak{g}_{\alpha }\subset \mathfrak{g}(\alpha (H))$. 
This implies that the eigenspace $\mathfrak{g}(m)$ is written as 
$\mathfrak{g}(m)=\bigoplus _{\alpha \in \Delta _m}\mathfrak{g}_{\alpha }$ 
for some subset $\Delta _m\subset \Delta \cup \{ 0\} $. 
Hence, the $\sigma$-stability of $\mathfrak{g}(m)$ 
follows from Lemma \ref{lem:rootspace}. 
\end{proof}

The parabolic subalgebra $\mathfrak{q}$, 
the Levi subalgebra $\mathfrak{l}$, 
and the nilpotent subalgebra $\mathfrak{n}$, respectively, 
consist of some $\operatorname{ad}(H)$-eigenspaces. 
Then, 
the following lemma is an immediate consequence of Lemma \ref{lem:eigenspace}. 

\begin{lemma}
\label{lem:sigma-n}
The subalgebras $\mathfrak{q}$, $\mathfrak{l}$, and $\mathfrak{n}$ of $\mathfrak{g}$ 
are $\sigma$-stable. 
\end{lemma}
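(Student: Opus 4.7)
The plan is to read off the conclusion directly from Lemma \ref{lem:eigenspace}, since the three subalgebras in question are each by definition a direct sum of $\operatorname{ad}(H)$-eigenspaces. Indeed, the defining formulas (\ref{eq:parabolic}), (\ref{eq:levi}), and (\ref{eq:nilpotent-subalgebra}) give
\[
\mathfrak{q}=\bigoplus_{m\geq 0}\mathfrak{g}(m),\qquad \mathfrak{l}=\mathfrak{g}(0),\qquad \mathfrak{n}=\bigoplus_{m\geq 2}\mathfrak{g}(m),
\]
so each is a sum of certain $\mathfrak{g}(m)$.

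Next I would invoke Lemma \ref{lem:eigenspace}, which guarantees $\sigma(\mathfrak{g}(m))\subset \mathfrak{g}(m)$ for every $m\in\mathbb{Z}$. Since $\sigma$ is $\mathbb{R}$-linear (in fact $\mathbb{C}$-antilinear), it preserves any finite direct sum of $\sigma$-stable subspaces; applying this to the three index sets $\{m\geq 0\}$, $\{0\}$, and $\{m\geq 2\}$ yields $\sigma(\mathfrak{q})\subset \mathfrak{q}$, $\sigma(\mathfrak{l})\subset \mathfrak{l}$, and $\sigma(\mathfrak{n})\subset \mathfrak{n}$, which is the statement of Lemma \ref{lem:sigma-n}.

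There is no genuine obstacle here: the only non-trivial input is Lemma \ref{lem:eigenspace}, which in turn rests on the $\sigma$-stability of the root spaces (Lemma \ref{lem:rootspace}) together with the fact that $H\in\mathfrak{a}$ was chosen in the $\sigma$-stable Cartan subalgebra, so that $\mathfrak{g}(m)$ decomposes as a sum of root spaces indexed by a subset of $\Delta\cup\{0\}$. Consequently this lemma deserves only a one-line proof, in keeping with the remark in the text that it is ``an immediate consequence of Lemma \ref{lem:eigenspace}.''
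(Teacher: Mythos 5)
Your proposal is correct and follows exactly the paper's argument: each of $\mathfrak{q}$, $\mathfrak{l}$, $\mathfrak{n}$ is a direct sum of the $\operatorname{ad}(H)$-eigenspaces $\mathfrak{g}(m)$, and Lemma \ref{lem:eigenspace} gives the $\sigma$-stability of each $\mathfrak{g}(m)$. Nothing is missing.
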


By Lemma \ref{lem:sigma-n}, 
the restriction of $\sigma \in \operatorname{Aut}\mathfrak{g}$ to $\mathfrak{l}$ 
becomes an anti-holomorphic involution of $\mathfrak{l}$. 
Here, we set a real form of $\mathfrak{l}$ by $\mathfrak{l}_{\mathbb{R}}
=\mathfrak{l}^{\sigma }=\mathfrak{l}\cap \mathfrak{g}_{\mathbb{R}}$. 
Then, we have: 

\begin{lemma}
\label{lem:normal-l}
The real form $\mathfrak{l}_{\mathbb{R}}$ of $\mathfrak{l}$ is normal. 
\end{lemma}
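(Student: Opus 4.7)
\bigskip

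\noindent\textbf{Proof proposal for Lemma \ref{lem:normal-l}.}

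The plan is to exhibit a Cartan decomposition of $\mathfrak{l}_{\mathbb{R}}$ that inherits $\mathfrak{a}_{\mathbb{R}}$ as a maximal split abelian subspace, and then compare dimensions. First I would observe that, since $H\in \mathfrak{a}$ and $\mathfrak{a}$ is abelian, the inclusion $\mathfrak{a}\subset \mathfrak{z}_{\mathfrak{g}}(H)=\mathfrak{l}$ holds. Moreover $\mathfrak{z}_{\mathfrak{g}}(\mathfrak{a})=\mathfrak{a}$ because $\mathfrak{a}$ is a Cartan subalgebra of $\mathfrak{g}$, and hence $\mathfrak{z}_{\mathfrak{l}}(\mathfrak{a})=\mathfrak{a}$, which shows that $\mathfrak{a}$ is a Cartan subalgebra of $\mathfrak{l}$. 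In particular $\operatorname{rank}\mathfrak{l}=\dim_{\mathbb{C}}\mathfrak{a}=\dim_{\mathbb{R}}\mathfrak{a}_{\mathbb{R}}$.

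Next, let $\theta$ be the Cartan involution of $\mathfrak{g}_{\mathbb{R}}$ corresponding to $\mathfrak{g}_{\mathbb{R}}=\mathfrak{k}_{\mathbb{R}}+\mathfrak{p}_{\mathbb{R}}$, and extend it $\mathbb{C}$-linearly to $\mathfrak{g}$; then $\theta$ commutes with $\sigma$. Since $H\in \mathfrak{a}_{\mathbb{R}}+\sqrt{-1}\mathfrak{a}_{\mathbb{R}}\subset \mathfrak{p}_{\mathbb{R}}+\sqrt{-1}\mathfrak{p}_{\mathbb{R}}$, we have $\theta(H)=-H$, and consequently, for $Z\in \mathfrak{g}$,
\begin{equation*}
[H,\theta(Z)]=-\theta([H,Z]),
\end{equation*}
which shows that $\mathfrak{l}=\mathfrak{z}_{\mathfrak{g}}(H)$ is $\theta$-stable. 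Combined with Lemma \ref{lem:sigma-n}, the real form $\mathfrak{l}_{\mathbb{R}}=\mathfrak{l}\cap \mathfrak{g}_{\mathbb{R}}$ is preserved by $\theta$, so it admits the Cartan decomposition
\begin{equation*}
\mathfrak{l}_{\mathbb{R}}=(\mathfrak{k}_{\mathbb{R}}\cap \mathfrak{l}_{\mathbb{R}})+(\mathfrak{p}_{\mathbb{R}}\cap \mathfrak{l}_{\mathbb{R}}).
\end{equation*}

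Finally, since $\mathfrak{a}_{\mathbb{R}}\subset \mathfrak{p}_{\mathbb{R}}$ and $\mathfrak{a}_{\mathbb{R}}\subset \mathfrak{a}\subset \mathfrak{l}$, we have $\mathfrak{a}_{\mathbb{R}}\subset \mathfrak{p}_{\mathbb{R}}\cap \mathfrak{l}_{\mathbb{R}}$. As $\mathfrak{a}_{\mathbb{R}}$ is already maximal abelian in $\mathfrak{p}_{\mathbb{R}}$, it is a fortiori maximal abelian in the smaller subspace $\mathfrak{p}_{\mathbb{R}}\cap \mathfrak{l}_{\mathbb{R}}$. Therefore $\operatorname{rank}_{\mathbb{R}}\mathfrak{l}_{\mathbb{R}}=\dim_{\mathbb{R}}\mathfrak{a}_{\mathbb{R}}=\operatorname{rank}\mathfrak{l}$, proving that $\mathfrak{l}_{\mathbb{R}}$ is normal.

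The only mild subtlety is the verification that $\mathfrak{l}$ is $\theta$-stable; once $\theta(H)=-H$ is noted (which uses crucially that the normality of $\mathfrak{g}_{\mathbb{R}}$ forces $\mathfrak{a}$ to lie in $\mathfrak{p}_{\mathbb{R}}+\sqrt{-1}\mathfrak{p}_{\mathbb{R}}$), the rest of the argument is a direct comparison of ranks.
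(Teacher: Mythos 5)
Your proof is correct and follows essentially the same route as the paper: both arguments hinge on the inclusion $\mathfrak{a}_{\mathbb{R}}\subset \mathfrak{l}_{\mathbb{R}}\cap \mathfrak{p}_{\mathbb{R}}$ inside the Cartan decomposition of $\mathfrak{l}_{\mathbb{R}}$. You add two details the paper leaves implicit --- the $\theta$-stability of $\mathfrak{l}$ (via $\theta(H)=-H$), which justifies that $(\mathfrak{l}_{\mathbb{R}}\cap\mathfrak{k}_{\mathbb{R}})+(\mathfrak{l}_{\mathbb{R}}\cap\mathfrak{p}_{\mathbb{R}})$ really is a Cartan decomposition, and the identification $\operatorname{rank}\mathfrak{l}=\dim_{\mathbb{R}}\mathfrak{a}_{\mathbb{R}}$ by showing $\mathfrak{a}$ is a Cartan subalgebra of $\mathfrak{l}$, where the paper instead chains the inequalities $\operatorname{rank}_{\mathbb{R}}\mathfrak{l}_{\mathbb{R}}\geq\operatorname{rank}\mathfrak{g}\geq\operatorname{rank}\mathfrak{l}$ against the general bound $\operatorname{rank}_{\mathbb{R}}\mathfrak{l}_{\mathbb{R}}\leq\operatorname{rank}\mathfrak{l}$.
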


\begin{proof}
By definition, 
let us show the equality $\operatorname{rank}_{\mathbb{R}}\mathfrak{l}_{\mathbb{R}}
=\operatorname{rank}\mathfrak{l}$. 
Here, the inequality $\operatorname{rank}_{\mathbb{R}}\mathfrak{l}_{\mathbb{R}}
\leq \operatorname{rank}\mathfrak{l}$ holds in general. 
Then, it is sufficient to see $\operatorname{rank}_{\mathbb{R}}\mathfrak{l}_{\mathbb{R}}
\geq \operatorname{rank}\mathfrak{l}$. 

For this, 
we consider the maximal abelian $\mathfrak{a}_{\mathbb{R}}\subset \mathfrak{p}_{\mathbb{R}}$. 
The semisimple element $H\in \mathfrak{a}$ satisfies $[H,\mathfrak{a}]=\{ 0\} $, 
from which $\mathfrak{a}_{\mathbb{R}}\subset \mathfrak{a}\subset \mathfrak{l}$. 
Then, we obtain $\mathfrak{a}_{\mathbb{R}}\subset \mathfrak{l}\cap \mathfrak{p}_{\mathbb{R}}
=\mathfrak{l}_{\mathbb{R}}\cap \mathfrak{p}_{\mathbb{R}}$. 
As $\mathfrak{l}_{\mathbb{R}}=(\mathfrak{l}_{\mathbb{R}}\cap \mathfrak{k}_{\mathbb{R}})
+(\mathfrak{l}_{\mathbb{R}}\cap \mathfrak{p}_{\mathbb{R}})$ is 
a Cartan decomposition of $\mathfrak{l}_{\mathbb{R}}$, 
this inclusion implies that $\operatorname{rank}_{\mathbb{R}}\mathfrak{l}_{\mathbb{R}}
\geq \dim \mathfrak{a}_{\mathbb{R}}$. 
Since $\operatorname{rank}\mathfrak{g}\geq \operatorname{rank}\mathfrak{l}$ holds in general 
and $\mathfrak{g}_{\mathbb{R}}$ is a normal real form of $\mathfrak{g}$, 
we conclude 
\begin{align*}
\operatorname{rank}_{\mathbb{R}}\mathfrak{l}_{\mathbb{R}}
\geq \operatorname{rank}\mathfrak{g}\geq \operatorname{rank}\mathfrak{l}. 
\end{align*}

Therefore, Lemma \ref{lem:normal-l} has been proved. 
\end{proof}

\subsection{Compatible automorphism}
\label{subsec:lift}

We lift the anti-linear involution $\sigma $ on $\mathfrak{g}$ 
to an anti-holomorphic involution $\widetilde{\sigma}$ on the complex simple Lie group 
$G_{\mathbb{C}}=\operatorname{Int}\mathfrak{g}$. 
More precisely, we write 
$\widetilde{\sigma }(g):=\sigma g\sigma \in G_{\mathbb{C}}~(g\in G_{\mathbb{C}})$. 
Then, we have: 
\begin{align}
\sigma (g\cdot Z)=\widetilde{\sigma }(g)\cdot \sigma (Z)\quad 
(g\in G_{\mathbb{C}},~Z\in \mathfrak{g}). 
\label{eq:compatibility}
\end{align}

\begin{define}[see \cite{faraut-thomas,propagation}]
\label{def:compatible}
We say that an anti-holomorphic involution $\widetilde{\sigma}$ on $G_{\mathbb{C}}$ 
is a \textit{compatible automorphism} 
with the anti-holomorphic diffeomorphism $\sigma$ 
for the $G_{\mathbb{C}}$-action on $\mathfrak{g}$ 
if the equality (\ref{eq:compatibility}) holds. 
\end{define}

In view of Lemma \ref{lem:sigma-n}, 
the Levi subgroup $L_{\mathbb{C}}$ is $\widetilde{\sigma }$-stable, 
and 
the restriction of $\sigma \in \operatorname{Aut}\mathfrak{g}$ to $\mathfrak{n}$ 
induces an anti-holomorphic diffeomorphism on $\mathfrak{n}$. 
Hence, $\widetilde{\sigma }$ also becomes a compatible automorphism 
with $\sigma $ 
for the $L_{\mathbb{C}}$-action on $\mathfrak{n}$, namely, 
\begin{align}
\sigma (k\cdot Z)=\widetilde{\sigma }(k)\cdot \sigma (Z)\quad 
(k\in L_{\mathbb{C}},~Z\in \mathfrak{n}). 
\label{eq:compatibility-n}
\end{align}

The compact real form $G_u$ of $G_{\mathbb{C}}$ is $\widetilde{\sigma}$-stable 
because its Lie algebra $\mathfrak{g}_u$ is $\sigma $-stable. 
It follows from Lemma \ref{lem:sigma-n} that $\mathfrak{l}_u:=\mathfrak{l}\cap \mathfrak{g}_u$ 
is $\sigma$-stable, from which 
$L_u=L_{\mathbb{C}}\cap G_u$ is $\widetilde{\sigma}$-stable. 

\subsection{Visible actions on nilpotent subalgebra}
\label{subsec:property-slice}

We are ready to state our theorem on the $L_u$-action on $\mathfrak{n}$ as follows. 

\begin{theorem}
\label{thm:slice-n}
If $\mathcal{O}_X$ is nilpotent and spherical, 
then one can find $S_0\subset \mathfrak{n}$ 
such that the following properties are satisfied: 
\begin{enumerate}
	\renewcommand{\theenumi}{\alph{enumi}}
	\item $S_0$ is a real vector space. 
	\label{cond:vectorspace}
	
	\item $\mathfrak{n}=L_u\cdot S_0$. 
	\label{cond:orbit-decomp-n}
	
	\item $\sigma |_{S_0}=\operatorname{id}_{S_0}$. 
	\label{cond:s1-n}
\end{enumerate}
\end{theorem}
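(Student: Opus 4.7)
The natural candidate for the slice is $S_0 := \mathfrak{n}^{\sigma} = \mathfrak{n} \cap \mathfrak{g}_{\mathbb{R}}$, the fixed-point set in $\mathfrak{n}$ of the anti-holomorphic involution $\sigma$ (this makes sense by Lemma \ref{lem:sigma-n}). Properties (\ref{cond:vectorspace}) and (\ref{cond:s1-n}) are immediate: $\mathfrak{n}^{\sigma}$ is a real vector space as the fixed-point set of an anti-linear involution on a complex vector space, and $\sigma|_{S_0} = \operatorname{id}_{S_0}$ by definition. Moreover, because $\mathfrak{g}_{\mathbb{R}}$ is a normal real form and $\mathfrak{n}$ is $\sigma$-stable, one has $\dim_{\mathbb{R}} S_0 = \dim_{\mathbb{C}} \mathfrak{n}$, so $S_0$ is the correct size to serve as a totally real slice.

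The content of the theorem is therefore condition (\ref{cond:orbit-decomp-n}): every $L_u$-orbit in $\mathfrak{n}$ meets the real form $S_0$. The sphericity hypothesis on $\mathcal{O}_X$ enters through Panyushev's classification \cite{pa1}, which forces the height of $\mathcal{O}_X$ to be $2$ or $3$, so $\mathfrak{n} = \mathfrak{g}(2)$ or $\mathfrak{n} = \mathfrak{g}(2) \oplus \mathfrak{g}(3)$. Since $\mathfrak{n}^{\circ} = Q_{\mathbb{C}} \cdot X$ is open in $\mathfrak{n}$, the $L_{\mathbb{C}}$-module $\mathfrak{n}$ is a prehomogeneous vector space, and combined with sphericity it is multiplicity-free. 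By Lemma \ref{lem:normal-l}, $\mathfrak{l}_{\mathbb{R}}$ is a normal real form of $\mathfrak{l}$, so $(L_u, \mathfrak{n}, S_0)$ falls squarely into the setting of the classification of strongly visible linear actions developed in \cite{irr, red}.

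I would then establish (\ref{cond:orbit-decomp-n}) by running through Panyushev's list of spherical nilpotent orbits case by case, organised by the height-$2$ versus height-$3$ dichotomy. In each case one reads off the decomposition of $\mathfrak{n}$ into irreducible $L_{\mathbb{C}}$-summands; for the classical algebras these summands are concretely realised as spaces of rectangular, symmetric, or skew-symmetric matrices, and for the exceptional algebras one uses the tables in \cite{pa1}. The verification that $\mathfrak{n} = L_u \cdot S_0$ then typically reduces to a classical polar/singular-value type decomposition asserting that every element of the complex matrix space is $L_u$-conjugate to one with real entries. When $L_u$ acts via, say, a product of unitary groups, this is well known; the same philosophy applies in each spherical case.

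The main obstacle is the case analysis itself, which is precisely the subject of Section \ref{sec:proof-n}. The height-$2$ cases, where $\mathfrak{n} = \mathfrak{g}(2)$ is a single $L_{\mathbb{C}}$-module, are the most transparent, and the required real polar decomposition is visible directly. The height-$3$ cases are harder: one must combine data on $\mathfrak{g}(2)$ and $\mathfrak{g}(3)$ and verify that a single real subspace $S_0$ simultaneously captures $L_u$-orbits across both graded components, compatibly with the bracket $[\mathfrak{g}(2),\mathfrak{g}(2)] \subset \mathfrak{g}(4) = \{0\}$ and the nontrivial bracket into $\mathfrak{g}(3)$. For the exceptional Lie algebras, where no transparent matrix model is available, the verification relies on the explicit structural information from Panyushev's tables together with the compatibility of the normal real structure with the chosen $\mathfrak{sl}_2$-triple.
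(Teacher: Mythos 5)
Your overall strategy coincides with the paper's: enumerate the spherical orbits via Panyushev's height criterion, identify the $L_{\mathbb{C}}$-module $\mathfrak{n}$ case by case with one of the multiplicity-free linear actions, and import a slice from the classification of visible linear actions of \cite{irr,red}. The one structural difference is your choice $S_0=\mathfrak{n}\cap\mathfrak{g}_{\mathbb{R}}$, the full normal real form, in place of the paper's much smaller slices. Since the paper's slices are real spans of root vectors $E_{\alpha}\in\mathfrak{g}_{\mathbb{R}}$ and hence contained in $\mathfrak{n}\cap\mathfrak{g}_{\mathbb{R}}$, your condition (\ref{cond:orbit-decomp-n}) is in fact true, and (\ref{cond:vectorspace}), (\ref{cond:s1-n}) are indeed immediate; your $S_0$ is far from minimal-dimensional, but Theorem \ref{thm:slice-n} does not require minimality and the induction argument of Section \ref{sec:induction} goes through with any such $S_0$.

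The gap is in your justification of (\ref{cond:orbit-decomp-n}). The reduction to ``a classical polar/singular-value type decomposition asserting that every element of the complex matrix space is $L_u$-conjugate to one with real entries'' is sound for the summands of types (2)--(5) of Table \ref{table:linear}, where $L_u$ acts through unitary groups and $\mathfrak{n}\cap\mathfrak{g}_{\mathbb{R}}$ is the standard real form $M(p,\mathbb{R})$, $\operatorname{Sym}(p,\mathbb{R})$, or $\operatorname{Alt}(2p,\mathbb{R})$. It fails as stated for the orthogonal-type summands, i.e.\ type (6) ($SO(m,\mathbb{C})\times\mathbb{C}^{\times}$ on $\mathbb{C}^m$, which occurs in the height-two cases for $\mathfrak{b}_n$, $\mathfrak{d}_n$, $\mathfrak{e}_7$, $\mathfrak{e}_8$, $\mathfrak{f}_4$ and inside several height-three cases) and type (7) (the Jordan algebra $\mathfrak{J}_{\mathbb{C}}$). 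If $\mathbb{R}^m$ denotes the real form on which the $SO(m)$-invariant quadratic form is definite, then every vector in $(SO(m)\times\mathbb{T})\cdot\mathbb{R}^m$ has parallel real and imaginary parts, so this set is a proper subset of $\mathbb{C}^m$: there is no polar decomposition onto that real form. What saves your statement is that $\mathfrak{n}\cap\mathfrak{g}_{\mathbb{R}}$ is the \emph{split} real form relative to $L_u$ (this is where Lemma \ref{lem:normal-l} enters), and the correct normal form is the slice $D_{1,1}=\mathbb{R}e_1\oplus\sqrt{-1}\mathbb{R}e_2$ of Table \ref{table:choice}, reached by first rotating by the central $\mathbb{T}$ to make the real and imaginary parts orthogonal and then applying $SO(m)$ --- an argument of a genuinely different flavour from the singular value decomposition. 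A further point you pass over is the indecomposable module $\operatorname{Alt}(2p,\mathbb{C})\oplus\mathbb{C}^{2p}$ of type (8), where one needs a \emph{simultaneous} normal form for the pair rather than two separate decompositions. These are precisely the facts that Lemma \ref{lem:linear} and Table \ref{table:choice} supply; without them your case-by-case verification does not close.
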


Our proof of Theorem \ref{thm:slice-n} uses a case-by-case analysis 
for each spherical $\mathcal{O}_X$, 
which will be given in Section \ref{sec:proof-n} separated from this section. 

\subsection{Proof of Theorem \ref{thm:linear}}

Let us see that Theorem \ref{thm:linear} follows from Theorem \ref{thm:slice-n}. 

\begin{proof}[Proof of Theorem \ref{thm:linear}]
Suppose that $\mathcal{O}_X$ is nilpotent and spherical. 
By Theorem \ref{thm:slice-n}, 
there exists a real vector subspace $S_0$ 
such that $\mathfrak{n}=L_u\cdot S_0$ and $\sigma |_{S_0}=\operatorname{id}_{S_0}$. 
We will verify that the data $(S_0,\sigma )$ 
satisfies the definition of strongly visible actions (see Section \ref{sec:intro}). 

The properties (\ref{cond:orbit-decomp-n}) and (\ref{cond:s1-n}) coincide with 
(\ref{visible:v1}) and (\ref{visible:s1}), respectively. 
To see (\ref{visible:s2}), 
we take $Z\in \mathfrak{n}$, 
and write $Z=k\cdot Z_0$ for some $k\in L_u$ and $Z_0\in S_0$ according to (\ref{visible:v1}). 
By the relation (\ref{eq:compatibility-n}) and the property (\ref{cond:s1-n}), 
we have 
\begin{align*}
\sigma (Z)&=\sigma (k\cdot Z_0)=\widetilde{\sigma }(k)\cdot \sigma (Z_0)
=\widetilde{\sigma }(k)\cdot Z_0=\widetilde{\sigma }(k)k^{-1}\cdot Z. 
\end{align*}
The element $\widetilde{\sigma }(g)g^{-1}$ lies in $L_u$, 
from which $\sigma (Z)\in L_u\cdot Z$. 
Hence, we have verified (\ref{visible:s2}). 

Consequently, Theorem \ref{thm:linear} has been proved. 
\end{proof}



\section{Induction theorem of strongly visible actions}
\label{sec:induction}

In this section, we show Theorem \ref{thm:induction thm}. 
We again reformulate Theorem \ref{thm:induction thm} as follows: 

\begin{theorem}
\label{thm:induction}
Let $\mathcal{O}_X$ be a nilpotent orbit, and $\mathfrak{l}=\mathfrak{g}(0)$ and 
$\mathfrak{n}=\bigoplus _{m\geq 2}\mathfrak{g}(m)$ are defined by the $\mathfrak{sl}_2$-triple 
as in (\ref{eq:levi}) and (\ref{eq:nilpotent-subalgebra}), respectively. 
If the $L_u$-action on $\mathfrak{n}$ is strongly visible, 
then the $G_u$-action on $\mathcal{O}_X$ is strongly visible. 
\end{theorem}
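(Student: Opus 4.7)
The plan is to construct a slice for the $G_u$-action on $\mathcal{O}_X$ directly from a slice for the strongly visible $L_u$-action on $\mathfrak{n}$, relying on the realization $\mathcal{O}_X = G_u \cdot \mathfrak{n}^{\circ}$ from Proposition \ref{prop:another realization} together with the compatibility relation (\ref{eq:compatibility}) between $\sigma$ and $\widetilde{\sigma}$ set up in Section \ref{subsec:lift}. The anti-holomorphic diffeomorphism of $\mathcal{O}_X$ will be the restriction of the normal-form conjugation $\sigma$, and the slice will be obtained by cutting the slice in $\mathfrak{n}$ down to the open subset $\mathfrak{n}^{\circ}$.

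To set this up, I would fix a slice $S \subset \mathfrak{n}$ for the $L_u$-action, so that $\mathfrak{n} = L_u \cdot S$, $\sigma|_{S} = \operatorname{id}_{S}$, and $\sigma$ preserves every $L_u$-orbit in $\mathfrak{n}$. I would then define
\[
\widetilde{S} := S \cap \mathfrak{n}^{\circ},
\]
which is an open subset of $S$ in its real-manifold topology, and hence a real submanifold of $\mathfrak{n}^{\circ} \subset \mathcal{O}_X$. The candidate data for the strongly visible $G_u$-action on $\mathcal{O}_X$ is then the pair $(\widetilde{S},\sigma|_{\mathcal{O}_X})$.

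The verification breaks into four steps. (i) First, $\sigma$ restricts to an anti-holomorphic diffeomorphism of $\mathcal{O}_X$: since $\sigma$ preserves each $L_u$-orbit in $\mathfrak{n}$ and $\mathfrak{n}^{\circ}$ is $L_u$-stable (as $L_u \subset Q_{\mathbb{C}}$), one deduces $\sigma(\mathfrak{n}^{\circ}) = \mathfrak{n}^{\circ}$, and combining this with $\widetilde{\sigma}(G_u) = G_u$ and (\ref{eq:compatibility}) yields $\sigma(\mathcal{O}_X) = \mathcal{O}_X$. (ii) For (\ref{visible:v1}), the decomposition $\mathfrak{n} = L_u \cdot S$ and the $L_u$-invariance of $\mathfrak{n}^{\circ}$ force every $L_u$-orbit meeting $\mathfrak{n}^{\circ}$ to meet $\widetilde{S}$, so $\mathfrak{n}^{\circ} = L_u \cdot \widetilde{S}$ and consequently $\mathcal{O}_X = G_u \cdot \mathfrak{n}^{\circ} = G_u \cdot \widetilde{S}$. (iii) Condition (\ref{visible:s1}) is immediate from $\widetilde{S} \subset S$. (iv) For (\ref{visible:s2}), given $W = g \cdot Z_0$ with $g \in G_u$ and $Z_0 \in \widetilde{S}$, the compatibility relation together with $\sigma(Z_0) = Z_0$ gives
\[
\sigma(W) = \widetilde{\sigma}(g) \cdot Z_0 = \widetilde{\sigma}(g) g^{-1} \cdot W,
\]
and $\widetilde{\sigma}(g) g^{-1} \in G_u$ because $G_u$ is $\widetilde{\sigma}$-stable.

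The conceptual content is modest once the machinery of Section \ref{subsec:lift} is in place; the main subtlety is passing from the linear involution $\sigma$ on the vector space $\mathfrak{n}$ to a well-defined anti-holomorphic diffeomorphism of the non-linear orbit $\mathcal{O}_X$. This is exactly what the normal-form construction (built up through Lemmas \ref{lem:rootspace}--\ref{lem:sigma-n}) together with (\ref{visible:s2}) for the $L_u$-action delivers: they ensure that $\mathfrak{n}^{\circ}$ is $\sigma$-stable, which is the crux of step (i) and thus of the whole induction.
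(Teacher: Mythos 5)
Your proposal is correct and follows essentially the same route as the paper: the slice is $S\cap\mathfrak{n}^{\circ}$, the anti-holomorphic diffeomorphism is the restriction of the normal-form conjugation $\sigma$, and conditions (V.1), (S.1), (S.2) are checked via the $L_u$-stability of $\mathfrak{n}^{\circ}$ and the compatibility relation (\ref{eq:compatibility}), exactly as in the paper's Lemma \ref{lem:slice-n0}, Proposition \ref{prop:v1}, and the ensuing computation. The only cosmetic difference is that you establish $\sigma(\mathcal{O}_X)=\mathcal{O}_X$ via $\sigma$-stability of $\mathfrak{n}^{\circ}$, whereas the paper verifies it directly on elements $g\cdot Z_0$ with $Z_0\in S$; both arguments are equivalent.
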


\begin{rem}
This theorem generalize \textit{induction theorem of strongly visible actions} \cite[Theorem 20]{mftheorem} 
for Type A case. 
It produces new strongly visible actions 
out of known strongly visible actions (linear visible actions \cite{irr,red}). 
\end{rem}

Suppose that the $L_u$-action on $\mathfrak{n}$ is strongly visible. 
Then, one can take a real submanifold $S_0$ in $\mathfrak{n}$ 
such that $\mathfrak{n}=L_u\cdot S_0$, $\sigma |_{S_0}=\operatorname{id}_{S_0}$. 
We set 
\begin{align}
S:=S_0\cap \mathfrak{n}^{\circ}. 
\label{eq:slice}
\end{align}
Then, $S$ is a real submanifold of $\mathfrak{n}^{\circ}$
since $\mathfrak{n}^{\circ}$ is open in $\mathfrak{n}$. 

\begin{lemma}
\label{lem:slice-n0}
$\mathfrak{n}^{\circ}=L_u\cdot S$. 
\end{lemma}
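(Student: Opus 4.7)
The plan is to verify the two inclusions $\mathfrak{n}^\circ \supset L_u \cdot S$ and $\mathfrak{n}^\circ \subset L_u \cdot S$ in turn, the key observation being that $\mathfrak{n}^\circ$ is $L_u$-invariant. Indeed, by construction (\ref{eq:regular}), $\mathfrak{n}^\circ = Q_{\mathbb{C}} \cdot X$ is a $Q_{\mathbb{C}}$-orbit, and since $L_u \subset L_{\mathbb{C}} \subset Q_{\mathbb{C}}$, the subset $\mathfrak{n}^\circ$ is stable under $L_u$.

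For the inclusion $L_u \cdot S \subset \mathfrak{n}^\circ$, note that $S = S_0 \cap \mathfrak{n}^\circ \subset \mathfrak{n}^\circ$, so the $L_u$-invariance of $\mathfrak{n}^\circ$ immediately yields $L_u \cdot S \subset \mathfrak{n}^\circ$.

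For the reverse inclusion, take an arbitrary element $Z \in \mathfrak{n}^\circ$. Since $\mathfrak{n}^\circ \subset \mathfrak{n}$ and the hypothesis supplies the decomposition $\mathfrak{n} = L_u \cdot S_0$, we may write $Z = k \cdot Z_0$ for some $k \in L_u$ and $Z_0 \in S_0$. Then $Z_0 = k^{-1} \cdot Z$ lies in $\mathfrak{n}^\circ$ by the $L_u$-invariance of $\mathfrak{n}^\circ$ applied to $k^{-1} \in L_u$. Hence $Z_0 \in S_0 \cap \mathfrak{n}^\circ = S$, and therefore $Z = k \cdot Z_0 \in L_u \cdot S$.

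There is no real obstacle here: once the $L_u$-invariance of $\mathfrak{n}^\circ$ is noted, the statement follows by a direct unpacking of the decomposition $\mathfrak{n} = L_u \cdot S_0$ and the definition (\ref{eq:slice}) of $S$. The only subtlety to keep in mind is that $S$ is defined as the intersection $S_0 \cap \mathfrak{n}^\circ$ rather than as an ambient slice in $\mathfrak{n}^\circ$, so one must confirm that every $Z_0 \in S_0$ appearing in a decomposition of a point of $\mathfrak{n}^\circ$ already belongs to $\mathfrak{n}^\circ$—which is exactly what the $L_u$-invariance guarantees.
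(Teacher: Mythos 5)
Your proof is correct and follows essentially the same route as the paper: both arguments rest on the $L_u$-invariance of $\mathfrak{n}^{\circ}$ (which the paper encodes as $\mathfrak{n}^{\circ}=L_u\cdot\mathfrak{n}^{\circ}$) combined with the decomposition $\mathfrak{n}=L_u\cdot S_0$. Your element-wise verification merely spells out the step the paper compresses into the identity $(L_u\cdot S_0)\cap(L_u\cdot\mathfrak{n}^{\circ})=L_u\cdot(S_0\cap\mathfrak{n}^{\circ})$, which is in fact the only point requiring the invariance.
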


\begin{proof}
In view of the equality $\mathfrak{n}^{\circ}=\mathfrak{n}\cap \mathfrak{n}^{\circ}$, 
we have 
\begin{align*}
\mathfrak{n}^{\circ}&=(L_u\cdot S_0)\cap \mathfrak{n}^{\circ} 
=(L_u\cdot S_0)\cap (L_u\cdot \mathfrak{n}^{\circ}) 
=L_u\cdot (S_0\cap \mathfrak{n}^{\circ})
=L_u\cdot S. 
\end{align*}
Hence, Lemma \ref{lem:slice-n0} has been proved. 
\end{proof}

Combining Proposition \ref{prop:another realization} with Lemma \ref{lem:slice-n0}, we have 
\begin{align*}
\mathcal{O}_X
=G_u\cdot \mathfrak{n}^{\circ}
=G_u\cdot (L_u\cdot S)
=(G_uL_u)\cdot S
=G_u\cdot S. 
\end{align*}

Therefore, we have proved: 

\begin{proposition}
\label{prop:v1}
The submanifold $S$ satisfies the condition (\ref{visible:v1}). 
\end{proposition}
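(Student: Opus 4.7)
The plan is to assemble Proposition \ref{prop:v1} directly from two ingredients already in hand: the realization $\mathcal{O}_X = G_u \cdot \mathfrak{n}^{\circ}$ from Proposition \ref{prop:another realization}, and the slice-refinement $\mathfrak{n}^{\circ} = L_u \cdot S$ from Lemma \ref{lem:slice-n0}. Since $L_u \subset G_u$, sandwiching $L_u$ into a $G_u$-orbit is absorbed, so no new geometric input should be required.

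Concretely, first I would note that $S = S_0 \cap \mathfrak{n}^{\circ}$ is a genuine real submanifold of $\mathfrak{n}^{\circ}$ because $\mathfrak{n}^{\circ}$ is open in $\mathfrak{n}$ (this openness was recorded in Section \ref{subsec:realization}), so intersecting a real submanifold of $\mathfrak{n}$ with an open subset keeps it a real submanifold. Second, I would apply Proposition \ref{prop:another realization} to rewrite $\mathcal{O}_X$ as $G_u \cdot \mathfrak{n}^{\circ}$. Third, using Lemma \ref{lem:slice-n0} I would replace $\mathfrak{n}^{\circ}$ by $L_u \cdot S$. Finally, the chain
\begin{equation*}
\mathcal{O}_X = G_u \cdot \mathfrak{n}^{\circ} = G_u \cdot (L_u \cdot S) = (G_u L_u) \cdot S = G_u \cdot S
\end{equation*}
closes the argument, where the last equality uses $L_u \subset G_u$. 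This is exactly the condition (\ref{visible:v1}) with $D = \mathcal{O}_X$ and $G = G_u$.

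There is no real obstacle: all the structural work has been done beforehand (the compatibility of $\sigma$ with the Levi decomposition, the choice of $S_0$ from Theorem \ref{thm:slice-n}, and the reduction of $\mathfrak{n}^{\circ}$ to an $L_u$-orbit of $S$ in Lemma \ref{lem:slice-n0}). The only point that deserves a brief verification is the openness-preservation in the first step, and conceptually the observation that we are allowed to restrict the slice from the linear object $\mathfrak{n}$ to its open $Q_{\mathbb{C}}$-orbit $\mathfrak{n}^{\circ}$ without losing surjectivity onto $\mathcal{O}_X$. After that the proof is a one-line substitution.
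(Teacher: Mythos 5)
Your proof is correct and is essentially identical to the paper's: the author likewise combines Proposition \ref{prop:another realization} with Lemma \ref{lem:slice-n0} to get the chain $\mathcal{O}_X=G_u\cdot\mathfrak{n}^{\circ}=G_u\cdot(L_u\cdot S)=(G_uL_u)\cdot S=G_u\cdot S$, and the remark that $S=S_0\cap\mathfrak{n}^{\circ}$ is a real submanifold because $\mathfrak{n}^{\circ}$ is open is also made in the paper just after (\ref{eq:slice}).
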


Next, we define an anti-holomorphic diffeomorphism of $\mathcal{O}_X$, 
which arises from $\sigma$ defined by (\ref{eq:sigma}) as follows. 

Let $Z$ be an element of $\mathcal{O}_X$, 
and write $Z=g\cdot Z_0$ for some $g\in G_u$ and $Z_0\in S$ due to Proposition \ref{prop:v1}. 
It is obvious from $S\subset S_0$ and $\sigma |_{S_0}=\operatorname{id}_{S_0}$ 
that 
\begin{align}
\sigma |_S=\operatorname{id}_S. 
\label{eq:s1}
\end{align}
Then, the relation (\ref{eq:compatibility-n}) shows 
\begin{align}
\sigma (Z)=\sigma (g\cdot Z_0)
=\widetilde{\sigma }(g)\cdot \sigma (Z_0)=\widetilde{\sigma }(g)\cdot Z_0. 
\label{eq:stable-nilpotent-orbit}
\end{align}
Since $G_u$ is $\widetilde{\sigma }$-stable (see Section \ref{subsec:lift}), 
the element $\widetilde{\sigma}(g)$ lies in $G_u$. 
Then, the equality (\ref{eq:stable-nilpotent-orbit}) means 
that $\sigma (Z)\in G_u\cdot S=\mathcal{O}_X$. 
Hence, $\mathcal{O}_X$ is $\sigma$-stable. 
This implies that 
the restriction of $\sigma $ to $\mathcal{O}_X$ 
becomes an anti-holomorphic diffeomorphism on $\mathcal{O}_X$, 
which we use the same letter to denote. 

Now, 
we give a proof of Theorem \ref{thm:induction}. 

\begin{proof}[Proof of Theorem \ref{thm:induction}]
It is clear that 
(\ref{visible:v1}) and (\ref{visible:s1}) hold by Proposition \ref{prop:v1} 
and (\ref{eq:s1}), respectively. 
Let $Z\in \mathcal{O}_X$ and write $Z=g\cdot Z_0\in G_u\cdot S$. 
Then, we have 
\begin{align*}
\sigma (Z)
=\widetilde{\sigma }(g)g^{-1}\cdot (g\cdot Z_0)
=\widetilde{\sigma }(g)g^{-1} \cdot Z\in G_u\cdot Z. 
\end{align*}
Hence, we have verified (\ref{visible:s2}). 

Therefore, Theorem \ref{thm:induction} has been proved. 
\end{proof}




\section{Proof of Theorem \ref{thm:slice-n}}
\label{sec:proof-n}

This section is devoted to the proof of Theorem \ref{thm:slice-n}. 

First of all, we give a short summary of our proof. 
The Dynkin--Kostant theory explains that 
a nilpotent orbit $\mathcal{O}_X$ in a complex simple Lie algebra $\mathfrak{g}$ 
is characterized by the weighted Dynkin diagram, denoted by $\Omega (\mathcal{O}_X)$, 
which is the Dynkin diagram of $\mathfrak{g}$ with numerical labels. 
A classification of nilpotent orbits is given in terms of 
the weighted Dynkin diagrams. 
Moreover, 
$\mathcal{O}_X$ defines the height, denoted by $\operatorname{ht}(\mathcal{O}_X)$. 
D. Panyushev provides a criterion for $\mathcal{O}_X$ to be spherical 
by its height. 
Since $\operatorname{ht}(\mathcal{O}_X)$ can be calculated from $\Omega (\mathcal{O}_X)$, 
the table of spherical nilpotent orbits is obtained. 

Under these theories, 
we apply case-by-case analysis on the $L_u$-action on $\mathfrak{n}$ 
for each spherical $\mathcal{O}_X$. 
Indeed, we clarify a semisimple element 
$H\in \Phi (\mathcal{O}_X)\cap \mathfrak{a}$ from $\Omega (\mathcal{O}_X)$ 
and express the $L_{\mathbb{C}}$-action on $\mathfrak{n}$. 
By using the classification of strongly visible linear actions, 
we verify the strong visibility for the $L_u$-action on $\mathfrak{n}$, and 
give an explicit description of $S_0$ satisfying (\ref{cond:vectorspace})--(\ref{cond:s1-n}). 

\subsection{Weighted Dynkin diagram of nilpotent orbit}
\label{subsec:dynkin}

In this subsection, 
we review the weighted Dynkin diagram corresponding to a nilpotent orbit $\mathcal{O}_X$ 
in a complex semisimple Lie algebra $\mathfrak{g}$. 
See \cite{nilpotent} for survey on weighted Dynkin diagrams in 
complex semisimple Lie algebras. 

Let us retain the setting of Sections \ref{sec:preliminaries} and 
\ref{sec:visible-n}. 
Let $\mathfrak{b}$ be a Borel subalgebra of $\mathfrak{g}$ 
such that $\mathfrak{b}$ contains the Cartan subalgebra $\mathfrak{a}$ 
and is contained in the parabolic subalgebra $\mathfrak{q}$. 
We fix a positive system $\Delta ^+\equiv \Delta ^+(\mathfrak{g},\mathfrak{a})$ 
satisfying $\mathfrak{b}=\mathfrak{a}\oplus \bigoplus _{\alpha \in \Delta ^+}\mathfrak{g}_{\alpha }$ 
and set a closed Weyl chamber by 
\begin{align*}
\mathfrak{a}_+=\{ A\in \mathfrak{a}:\alpha (A)\geq 0~(\forall \alpha \in \Delta ^+)\} . 
\end{align*}

As mentioned in Section \ref{subsec:stable}, the intersection of 
$\mathfrak{a}$ and the semisimple orbit $\Phi (\mathcal{O}_X)$ is non-empty. 
In particular, $\Phi (\mathcal{O}_X)\cap \mathfrak{a}_+$ is a singleton set. 
Then, we define an injective map by 
\begin{align}
\Psi :\mathcal{N}^*/G_{\mathbb{C}}\to \mathfrak{a}_+, \quad 
\mathcal{O}_X\mapsto H\in \Phi(\mathcal{O}_X)\cap \mathfrak{a}_+. 
\label{eq:characteristic}
\end{align}

\begin{define}[cf. {\cite{pa1}}]
\label{def:characteristic}
We say that $\Psi (\mathcal{O}_X)\in \Phi (\mathcal{O}_X)\cap \mathfrak{a}_+$ is the 
\textit{characteristic element} of $\mathcal{O}_X$. 
\end{define}

For the rest of this paper, 
we fix $H$ to be $H=\Psi (\mathcal{O}_X)\in \mathfrak{a}_+$. 

We write $ \alpha _1,\ldots ,\alpha _r \in \Delta ^+$ for the simple roots of $\Delta $ 
($r:=\operatorname{rank}\mathfrak{g}=\dim _{\mathbb{C}}\mathfrak{a}$). 
As $\mathfrak{b}\subset \mathfrak{q}$, 
the number $\alpha _j(H)$ is a non-negative integer. 
Moreover, 
it follows from the representation theory that 
$\alpha _j(H)\in \{ 0,1,2\}$ for $j=1,2,\ldots ,r$. 
Then, we define the injective map as follows: 
\begin{align}
\Omega :\mathcal{N}^*/G_{\mathbb{C}}\to \{ 0,1,2\} ^{r},\quad 
\Omega (\mathcal{O}_X):=
(\alpha _1(H),\ldots ,\alpha _r(H)). 
\label{eq:weighted}
\end{align}

We label the node of the Dynkin diagram of $\mathfrak{g}$ 
corresponding to each simple root $\alpha _j$ with $\alpha _j(H)$. 
The Dynkin diagram with such labels 
is called the \textit{weighted Dynkin diagram} of $\mathcal{O}_X$. 

We recall from (\ref{eq:kostant}) that 
there is a one-to-one correspondence between nilpotent orbits and conjugacy classes 
of $\mathfrak{sl}_2$-triples. 
Thanks to Dynkin's work on $\mathfrak{sl}_2$-triples \cite{dynkin}, 
the injective map (\ref{eq:weighted}) provides a characterization of 
nilpotent orbit in $\mathfrak{g}$ by corresponding weighted Dynkin diagrams. 

Next, the $\mathbb{Z}$-grading (\ref{eq:grading}) defined by $\mathcal{O}_X$ 
introduces a function on $\mathcal{N}^*/G_{\mathbb{C}}$ as follows: 
\begin{align}
\operatorname{ht}:\mathcal{N}^*/G_{\mathbb{C}}\to \mathbb{Z},\quad 
\mathcal{O}_X\mapsto \max \{ m\in \mathbb{Z}:\mathfrak{g}(m)\neq \{ 0\} \} .
\label{eq:height}
\end{align}
Since $X\in \mathfrak{g}(2)$, we obtain $\operatorname{ht}(\mathcal{O}_X)\geq 2$ 
for any $\mathcal{O}_X\in \mathcal{N}^*/G_{\mathbb{C}}$. 

\begin{define}[{\cite[Section 2]{pa1}}]
\label{def:height}
We say that 
the positive integer $\operatorname{ht}(\mathcal{O}_X)$ 
is the \textit{height} of a nilpotent orbit $\mathcal{O}_X$. 
\end{define}

D. Panyushev gives a necessary and sufficient condition 
for a nilpotent orbit to be spherical. 

\begin{fact}[{\cite[Theorem 3.1]{pa1}}]
\label{fact:panyushev}
For a nilpotent orbit $\mathcal{O}_X$, 
the following two conditions are equivalent: 
\begin{enumerate}
	\renewcommand{\theenumi}{\roman{enumi}}
	\item $\mathcal{O}_X$ is spherical. 
	\item $\operatorname{ht}(\mathcal{O}_X)\leq 3$. 
\end{enumerate}
\end{fact}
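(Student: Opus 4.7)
The plan is to mimic Panyushev's original strategy \cite{pa1}: first translate the sphericity of $\mathcal{O}_X$ into a property of the $L_\mathbb{C}$-action on the nilpotent subalgebra $\mathfrak{n}$, and then combine the $\mathfrak{sl}_2$-theory of the $\mathbb{Z}$-grading $\mathfrak{g}=\bigoplus_{m\in\mathbb{Z}}\mathfrak{g}(m)$ with the finite classification of nilpotent orbits via weighted Dynkin diagrams.

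\textbf{Reformulation.} First I would prove that $\mathcal{O}_X$ is spherical if and only if a Borel subgroup $B_L$ of $L_\mathbb{C}$ has an open orbit on $\mathfrak{n}$. By Lemma \ref{lem:another realization} we have $\mathcal{O}_X=\varphi(G_\mathbb{C}\times_{Q_\mathbb{C}}\mathfrak{n}^{\circ})$; any Borel $B_\mathbb{C}$ of $G_\mathbb{C}$ contained in $Q_\mathbb{C}$ factors as $B_\mathbb{C}=B_L U_\mathbb{C}$ with $U_\mathbb{C}$ the unipotent radical of $Q_\mathbb{C}$. A standard application of the local structure theorem for spherical varieties then reduces sphericity of $\mathcal{O}_X$ to existence of an open $B_L$-orbit in $\mathfrak{n}^{\circ}$, hence in $\mathfrak{n}$ since $\mathfrak{n}^{\circ}$ is open.

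\textbf{The two implications.} For $\mathrm{(ii)}\Rightarrow\mathrm{(i)}$: if $\mathrm{ht}(\mathcal{O}_X)=2$, then $\mathfrak{n}=\mathfrak{g}(2)$ is abelian and the pair $(\mathfrak{l},\mathfrak{g}(2))$ is of Hermitian symmetric type, so an open $B_L$-orbit is produced by the sum of root vectors for a maximal system of strongly orthogonal long roots (Kostant--Rallis). If $\mathrm{ht}(\mathcal{O}_X)=3$, then $[\mathfrak{g}(2),\mathfrak{g}(2)]\subset\mathfrak{g}(4)=0$ and $[\mathfrak{g}(2),\mathfrak{g}(3)]\subset\mathfrak{g}(5)=0$, so $\mathfrak{n}=\mathfrak{g}(2)\oplus\mathfrak{g}(3)$ is abelian, and a case inspection of the finitely many weighted Dynkin diagrams with $\mathrm{ht}=3$ exhibits an open $B_L$-orbit in each. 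For $\mathrm{(i)}\Rightarrow\mathrm{(ii)}$: if $\mathrm{ht}(\mathcal{O}_X)\geq 4$, then $\mathfrak{sl}_2$-representation theory forces both $\mathfrak{g}(2)$ and some $\mathfrak{g}(m)$ with $m\geq 4$ to be nonzero, and the iterated brackets yield nontrivial $L_\mathbb{C}$-equivariant polynomial maps $\mathfrak{g}(2)^{\otimes k}\to\mathfrak{g}(2k)$; pairing one such map with a highest-weight vector of $\mathfrak{g}(2k)$ and dividing by an appropriate $B_L$-eigenvector in $S^{\bullet}(\mathfrak{g}(2)^{*})$ produces a non-constant $B_L$-invariant rational function on $\mathfrak{n}$, which precludes an open $B_L$-orbit.

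\textbf{Main obstacle.} The delicate point is uniformly constructing a non-constant $B_L$-invariant when $\mathrm{ht}\geq 4$: a direct argument needs detailed information about the $L_\mathbb{C}$-module structure of $S^{\bullet}(\mathfrak{n}^{*})$. A safe fallback, and essentially what Panyushev carries out, is a case-by-case traversal of all weighted Dynkin diagrams (with labels in $\{0,1,2\}$) whose maximal value on positive roots is $\geq 4$: the list is finite in each Dynkin type $A_r,B_r,C_r,D_r,E_6,E_7,E_8,F_4,G_2$, and for every entry one explicitly produces either a non-trivial $B_L$-invariant on $\mathfrak{n}$ or a $B_L$-semi-invariant whose zero locus has codimension $\geq 1$ and whose existence is incompatible with an open $B_L$-orbit. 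This case-by-case inspection is where the bulk of the work resides.
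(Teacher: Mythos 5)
This statement is quoted verbatim as a Fact from Panyushev \cite{pa1}; the paper supplies no proof of it, so there is no internal argument to compare yours against. Judged on its own, your sketch has a genuine gap in the reduction step. The realization $\mathcal{O}_X=\varphi (G_{\mathbb{C}}\times _{Q_{\mathbb{C}}}\mathfrak{n}^{\circ})$ together with the complexity formula for homogeneous bundles (or the local structure theorem) reduces sphericity of $\mathcal{O}_X$ to the existence of a dense orbit of a Borel subgroup $B_{\mathbb{C}}=B_LU_{\mathbb{C}}$ of $G_{\mathbb{C}}$ contained in $Q_{\mathbb{C}}$, i.e.\ to $\mathfrak{n}$ being a spherical $Q_{\mathbb{C}}$-module --- not a spherical $L_{\mathbb{C}}$-module. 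Your reformulation silently drops the unipotent radical $U_{\mathbb{C}}$, which acts nontrivially on $\mathfrak{n}$ (for instance $[\mathfrak{g}(1),\mathfrak{g}(2)]\subset \mathfrak{g}(3)$). For the implication (ii)~$\Rightarrow$~(i) this is harmless, since an open $B_L$-orbit is a fortiori an open $B_LU_{\mathbb{C}}$-orbit; but for (i)~$\Rightarrow$~(ii) it is fatal: a non-constant $B_L$-invariant rational function on $\mathfrak{n}$ only precludes a dense $B_L$-orbit and says nothing about a dense $B_LU_{\mathbb{C}}$-orbit. The equivalence ``$\mathcal{O}_X$ spherical $\iff$ $\mathfrak{n}$ is a spherical $L_{\mathbb{C}}$-module'' is true, but it is precisely (\ref{item:spherical})$\Leftrightarrow$(\ref{item:visible-n}) of Corollary \ref{cor:equivalent}, whose proof in this paper uses Fact \ref{fact:panyushev} as an input (via Theorem \ref{thm:linear}); taking it as a starting point would be circular.

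Beyond that, both implications ultimately rest on unperformed classification work (``a case inspection \dots exhibits an open $B_L$-orbit in each'', ``a case-by-case traversal \dots produces a non-trivial $B_L$-invariant''), and the uniform construction of invariants for $\operatorname{ht}(\mathcal{O}_X)\geq 4$ is only gestured at: pairing the iterated bracket $\mathfrak{g}(2)^{\otimes k}\to \mathfrak{g}(2k)$ with a highest-weight covector yields a $B_L$-\emph{semi}-invariant, and to form a non-constant invariant quotient you must exhibit a second semi-invariant of the same weight, whose existence is exactly what needs proof. So the sketch identifies the right objects (the $\mathbb{Z}$-grading, the bundle $G_{\mathbb{C}}\times _{Q_{\mathbb{C}}}\mathfrak{n}$, the reduction to a linear action) but closes neither direction; to repair it you would need to carry out the whole argument for the $Q_{\mathbb{C}}$-action rather than the $L_{\mathbb{C}}$-action, which is how Panyushev's proof actually proceeds.
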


In view of Fact \ref{fact:panyushev}, 
we consider how to calculate $\operatorname{ht}(\mathcal{O}_X)$ 
by the weighted Dynkin diagram $\Omega (\mathcal{O}_X)$. 
Let $\beta \in \Delta ^+$ be the highest root. 
We write $\beta =k_1\alpha _1+\cdots +k_r\alpha _r$ 
for some positive integers $k_1,\ldots ,k_r$. 
Then, we have: 

\begin{lemma}
\label{lem:height-max}
$\operatorname{ht}(\mathcal{O}_X)=\beta (H)=k_1\alpha _1(H)+\cdots +k_r\alpha _r(H)$. 
\end{lemma}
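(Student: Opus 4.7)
The plan is to express $\operatorname{ht}(\mathcal{O}_X)$ as the maximum of $\alpha(H)$ over the root system $\Delta$, and then to invoke the maximality of the highest root in the dominance order together with the fact that $H$ lies in the closed Weyl chamber $\mathfrak{a}_+$.

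First I would refine the $\operatorname{ad}(H)$-eigenspace decomposition via the root space decomposition $\mathfrak{g}=\mathfrak{a}\oplus\bigoplus_{\alpha\in\Delta}\mathfrak{g}_\alpha$. Since $[H,Z_\alpha]=\alpha(H)Z_\alpha$ for every root vector, each $\mathfrak{g}_\alpha$ is contained in $\mathfrak{g}(\alpha(H))$, while $\mathfrak{a}\subset\mathfrak{g}(0)$. Regrouping by eigenvalue, for $m\neq 0$ one obtains
\begin{align*}
\mathfrak{g}(m)=\bigoplus_{\alpha\in\Delta,\ \alpha(H)=m}\mathfrak{g}_\alpha,
\end{align*}
so the condition $\mathfrak{g}(m)\neq\{0\}$ with $m>0$ amounts exactly to the existence of some $\alpha\in\Delta$ with $\alpha(H)=m$. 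It follows that $\operatorname{ht}(\mathcal{O}_X)=\max\{\alpha(H):\alpha\in\Delta\}$.

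Second, since $H=\Psi(\mathcal{O}_X)\in\mathfrak{a}_+$, one has $\alpha_j(H)\geq 0$ for each simple root $\alpha_j$ (in fact $\alpha_j(H)\in\{0,1,2\}$ by the characteristic interpretation). This immediately implies that the maximum above is attained on $\Delta^+$: any positive root evaluates non-negatively on $H$, whereas negative roots evaluate non-positively, and the maximum is strictly positive because $X\in\mathfrak{g}(2)$ forces $\mathfrak{g}(2)\neq\{0\}$. Finally, I would apply the standard fact that the highest root $\beta$ dominates every other positive root $\alpha$ in the partial order, i.e.\ $\beta-\alpha$ is a non-negative integer combination of the simple roots; pairing with $H$ and using $\alpha_j(H)\geq 0$ yields $\beta(H)\geq\alpha(H)$ for all $\alpha\in\Delta^+$. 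Hence $\beta(H)$ realizes the maximum, and substituting $\beta=k_1\alpha_1+\cdots+k_r\alpha_r$ gives the stated formula. I do not expect any genuine obstacle here: the argument is essentially bookkeeping, the only non-trivial input being the dominance of the highest root in the root partial order, which is standard.
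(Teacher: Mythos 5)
Your proposal is correct and follows essentially the same route as the paper: reduce $\operatorname{ht}(\mathcal{O}_X)$ to $\max\{\alpha(H):\alpha\in\Delta\}$ via the root-space refinement of the $\operatorname{ad}(H)$-eigenspace decomposition, then use $\alpha_j(H)\geq 0$ together with the coefficientwise maximality of the highest root to conclude that the maximum is $\beta(H)$. The only cosmetic difference is that the paper compares coefficients $l_j\leq k_j$ for all roots at once rather than first restricting to $\Delta^+$; the underlying input is identical.
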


\begin{proof}
By the proof of Lemma \ref{lem:eigenspace}, 
we formulate 
\begin{align}
\operatorname{ht}(\mathcal{O}_X)
=\max \{ m\in \mathbb{Z}:\mathfrak{g}(m)\neq \{ 0\} \} 
=\max \{ \alpha (H):\alpha \in \Delta \}. 
\label{eq:height-root}
\end{align}

Let $\alpha \in \Delta$ and 
write $\alpha =l_1\alpha _1+\cdots +l_r\alpha _r$ ($l_1,\ldots ,l_r\in \mathbb{Z}$). 
Since $\beta $ is the highest root, 
the inequality $l_j\leq k_j$ holds for any $j=1,2,\ldots ,r$. 
As $\alpha _j(H)\geq 0$ (see (\ref{eq:weighted})), 
we estimate 
\begin{align*}
\alpha (H)
&=l_1\alpha _1(H)+\cdots +l_r\alpha _r(H)\\
&\leq k_1\alpha _1(H)+\cdots +k_r\alpha _r(H)\\
&=\beta (H). 
\end{align*}
This means that 
\begin{align}
\max \{ \alpha (H):\alpha \in \Delta \} =\beta (H). 
\label{eq:height-highest}
\end{align}

Combining (\ref{eq:height-root}) and (\ref{eq:height-highest}), 
we get $\operatorname{ht}(\mathcal{O}_X)=\beta (H)$. 
\end{proof}

Using Lemma \ref{lem:height-max}, 
we list all weighted Dynkin diagrams $\Omega (\mathcal{O}_X)$ 
with $\operatorname{ht}(\mathcal{O}_X)=2,3$ 
in the first and second columns of Tables \ref{table:classical} and \ref{table:exceptional}. 

\subsection{Visible linear actions}
\label{subsec:linear}

In this subsection, 
we recall the recent works on strongly visible linear actions, 
see \cite{irr,red} for details. 

Let $K_{\mathbb{C}}$ be a connected complex reductive Lie group 
and $V$ a vector space over $\mathbb{C}$. 
Suppose we are given a holomorphic representation of $K_{\mathbb{C}}$ on $V$. 
Then, we have naturally the representation of $K_{\mathbb{C}}$ on the polynomial ring 
$\mathbb{C}[V]$ defined by $f(v)\mapsto f(g^{-1}\cdot v)$. 
We say that the $K_{\mathbb{C}}$-action on $V$ is a \textit{multiplicity-free action}, 
or, $V$ is a \textit{multiplicity-free $K_{\mathbb{C}}$-space} 
if $\mathbb{C}[V]$ is multiplicity-free as a representation of $K_{\mathbb{C}}$. 

Multiplicity-free actions are classified 
by Kac, Benson--Ratcliff, and Leahy \cite{br,kac,leahy} up to geometrically equivalences. 
Here, 
two holomorphic representations $(\pi ,V)$ and $(\pi ',V')$ 
of connected complex reductive Lie groups 
$K_{\mathbb{C}}$ and $K_{\mathbb{C}}'$, respectively, 
are \textit{geometrically equivalent} 
if the image of $\pi$ coincides with that of $\pi '$ 
under some linear isomorphism from $V$ to $V'$. 

Let $K_u$ be a compact real form of $K_{\mathbb{C}}$. 
Then, we have: 

\begin{fact}[{\cite{irr,red}}]
\label{fact:visible-linear}
For a holomorphic representation of $K_{\mathbb{C}}$ on $V$, 
the followings are equivalent: 
\begin{enumerate}
	\renewcommand{\theenumi}{\alph{enumi}}
	\item The $K_{\mathbb{C}}$-action on $V$ is a multiplicity-free action. 
	\item The $K_u$-action on $V$ is strongly visible. 
\end{enumerate}
\end{fact}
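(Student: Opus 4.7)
The plan is to establish the two implications separately. The implication (b) $\Rightarrow$ (a) follows from general theory, while the converse (a) $\Rightarrow$ (b) requires a case-by-case construction on the existing classification of multiplicity-free actions.

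For (b) $\Rightarrow$ (a), I would invoke Kobayashi's propagation theorem of multiplicity-freeness \cite{triunity, mftheorem, propagation}. Viewing $V$ as a $K_u$-equivariant complex manifold on which the $K_u$-action is strongly visible, I take the trivial $K_u$-equivariant Hermitian line bundle $V \times \mathbb{C} \to V$ with trivial fiber action; the one-dimensional fiber is trivially multiplicity-free as a representation of any isotropy subgroup. The propagation theorem then yields that the space $\mathcal{O}(V)$ of holomorphic functions is multiplicity-free as a representation of $K_u$. Since the polynomial ring $\mathbb{C}[V] = \bigoplus_{d \geq 0} S^{d}(V^{*})$ is a $K_u$-stable subspace of $\mathcal{O}(V)$, it is multiplicity-free as a $K_u$-module, and therefore as a $K_{\mathbb{C}}$-module by Weyl's unitary trick (the algebraic $K_{\mathbb{C}}$-isotypic components are finite-dimensional and agree with the $K_u$-isotypic ones).

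For (a) $\Rightarrow$ (b), I would proceed by a case-by-case analysis using the classification of multiplicity-free linear actions due to Kac, Benson--Ratcliff, and Leahy \cite{br, kac, leahy}. Up to geometric equivalence, the pair $(K_{\mathbb{C}}, V)$ falls into a finite list consisting of irreducible entries (Kac's list) and reducible indecomposable ones (the Benson--Ratcliff and Leahy extensions). For each entry, I would first select a real form $V_{\mathbb{R}}$ of $V$ and a compact real form $K_u$ of $K_{\mathbb{C}}$ in a compatible fashion: there should exist an anti-holomorphic involution $\widetilde{\sigma}$ of $K_{\mathbb{C}}$ stabilizing $K_u$ and satisfying the intertwining relation $\sigma(k \cdot v) = \widetilde{\sigma}(k) \cdot \sigma(v)$, where $\sigma$ is the complex conjugation of $V$ with respect to $V_{\mathbb{R}}$. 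Next, I would exhibit a real linear subspace $S \subset V_{\mathbb{R}}$ such that $V = K_u \cdot S$, preferably of minimal dimension equal to the generic orbit codimension. Once (V.1) is secured, conditions (S.1) and (S.2) follow formally: (S.1) because $S \subset V_{\mathbb{R}}$ lies in the $\sigma$-fixed locus, and (S.2) because any $v = k \cdot s$ satisfies $\sigma(v) = \widetilde{\sigma}(k) k^{-1} \cdot v \in K_u \cdot v$.

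The main obstacle is the construction of $S$ and the verification of (V.1), for which no uniform argument is known. For irreducible entries arising from symmetric pairs, I would adapt the Kostant--Rallis decomposition: take $S$ to be a maximal abelian subspace in the $(-1)$-eigenspace of a Cartan-like involution, so that a polar-type decomposition yields $V = K_u \cdot S$. For reducible entries, one must coordinate local slices of each irreducible summand into a single real subspace that still meets every $K_u$-orbit, which is genuinely delicate because the $K_{\mathbb{C}}$-factors may couple the summands diagonally. The exceptional entries on Kac's list, notably those involving spin or symplectic factors, require ad hoc choices of $S$ in explicit coordinates. Thus the existence of a real slice is not a formal consequence of multiplicity-freeness but must be verified representation by representation, and each case depends on its peculiar combinatorial and geometric structure.
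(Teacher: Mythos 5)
The paper does not prove this statement itself: it is quoted as a Fact from the author's earlier papers \cite{irr,red}, and your outline (propagation of multiplicity-freeness for the implication from strong visibility, and a case-by-case construction of compatible conjugations and real slices over the Kac--Benson--Ratcliff--Leahy classification for the converse) is exactly the strategy of those references, echoed in this paper's Lemmas \ref{lem:linear} and \ref{lem:linear2} and Tables \ref{table:linear} and \ref{table:choice}. So your proposal is correct and takes essentially the same approach as the source the paper relies on.
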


Fact \ref{fact:visible-linear} gives a classification of strongly visible linear actions. 
During them, as we will see in the proof of Theorem \ref{thm:slice-n}, 
we need only eight series of multiplicity-free actions, which are listed in Table \ref{table:linear}. 

\begin{table}[htbp]
$
\begin{array}{ccc}
\hline 
 & K_{\mathbb{C}} & V \\
\hline 
(1) & \mathbb{C}^{\times} & \mathbb{C} \\
(2) & SL(p,\mathbb{C}) & \mathbb{C}^p \\
(3) & SL(p,\mathbb{C})\times \mathbb{C}^{\times} & \operatorname{Sym}(p,\mathbb{C}) \\
(4) & SL(2p,\mathbb{C})\times \mathbb{C}^{\times} & \operatorname{Alt}(2p,\mathbb{C}) \\
(5) & SL(p,\mathbb{C})\times SL(p,\mathbb{C})\times \mathbb{C}^{\times} & M(p,\mathbb{C}) \\
(6) & SO(p,\mathbb{C})\times \mathbb{C}^{\times} & \mathbb{C}^p \\
(7) & E_6(\mathbb{C})\times \mathbb{C}^{\times} 
	& \mathfrak{J}_{\mathbb{C}} \\
(8) & SL(2p,\mathbb{C})\times \mathbb{C}^{\times} 
	& \operatorname{Alt}(2p,\mathbb{C})\oplus \mathbb{C}^{2p} \\
\hline 
\end{array}
$
\caption{Multiplicity-free $K_{\mathbb{C}}$-action on $V$}
\label{table:linear}
\end{table}

We pin down some restrictions on integer $p$ in Table \ref{table:linear} as follows: 
In (2), (3), (5), and (6), $p\geq 2$; In (4) and (8), $p\geq 1$. 

In (1), the multiplicative group $\mathbb{C}^{\times }=GL(1,\mathbb{C})$ 
acts on $\mathbb{C}$ as the standard complex multiplication. 
The special linear group $SL(p,\mathbb{C})=\{ g\in M(p,\mathbb{C}):\det g=1\} $ 
and the special complex orthogonal group $SO(p,\mathbb{C})
=\{ g\in SL(p,\mathbb{C}):{}^tgg=I_p\} $ act 
linearly on $\mathbb{C}^p$, respectively, 
where ${}^tg$ denotes the transposed matrix of $g$. 
In (3), 
$SL(p,\mathbb{C})$ acts on the space $\operatorname{Sym}(p,\mathbb{C})$ of complex symmetric 
matrices by $g\cdot A=gA\,{}^tg$. 
In (4), 
$SL(2p,\mathbb{C})$ acts on the space $\operatorname{Alt}(2p,\mathbb{C})$ of 
complex alternating matrices by $g\cdot A=gA\,{}^tg$. 
In (5), $SL(p,\mathbb{C})\times SL(p,\mathbb{C})$ acts on $M(p,\mathbb{C})$ 
by $(g,h)\cdot A=gAh^{-1}$. 
In (7), 
$\mathfrak{J}_{\mathbb{C}}=\mathfrak{J}\otimes _{\mathbb{R}}\mathbb{C}
=\operatorname{Herm}(3,\mathfrak{C})\otimes _{\mathbb{R}}\mathbb{C}
=\operatorname{Herm}(3,\mathfrak{C}_{\mathbb{C}})$ 
is the complexified exceptional Jordan algebra, 
namely, 
it consists of Hermitian matrices of degree three 
whose entries are the complexified Cayley algebra $\mathfrak{C}_{\mathbb{C}}
=\mathfrak{C}\otimes _{\mathbb{R}}\mathbb{C}$. 
Then, $\mathfrak{J}_{\mathbb{C}}$ is a vector space over $\mathbb{C}$ 
with dimension $27$. 
We denote by $E_6(\mathbb{C})$ the connected and simply connected complex simple Lie group 
of exceptional type. 
Then, $E_6(\mathbb{C})$ acts on $\mathfrak{J}_{\mathbb{C}}$ as automorphisms. 

In (3)--(7), the center $\mathbb{C}^{\times}$ of $K_u$ acts on $V$ as the scalar multiplication. 
In (8), 
the semisimple part $SL(2p,\mathbb{C})$ of $K_u$ 
acts on $\mathbb{C}^{2p}\oplus \operatorname{Alt}(2p,\mathbb{C})$ 
by $g\cdot (v,A)=(gv,gA\,{}^tg)$, and the center $\mathbb{C}^{\times}$ acts by 
$s\cdot (v,A)=(s^3v,s^2A)$. 

For $(K_{\mathbb{C}},V)$ in Table \ref{table:linear}, we present: 

\begin{lemma}[{\cite{irr,red}}]
\label{lem:linear}
Let a multiplicity-free $K_{\mathbb{C}}$-action on $V$ be one of cases in Table \ref{table:linear}. 
Then, one can take a real vector subspace $T$ and an anti-holomorphic diffeomorphism $\sigma $ 
for the strongly visible $K_u$-action on $V$ satisfying the following conditions: 
\begin{enumerate}
	\renewcommand{\theenumi}{\alph{enumi}}
	\item $V=K_u\cdot T$. 
	\label{item:orbit}
	
	\item $\sigma |_T=\operatorname{id}_T$. 
	\label{item:sigma}
	
	\item The dimension of the vector space $T$ over $\mathbb{R}$ is equal to 
	the support of the semigroup of highest weights occurring in $\mathbb{C}[V]$. 
\end{enumerate}
\end{lemma}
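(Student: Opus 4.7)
The plan is to verify the lemma by a case-by-case construction matching each of the eight entries of Table~\ref{table:linear} with a classical ``real normal form'' theorem. In each case I would exhibit a real vector subspace $T\subset V$ coming from that normal form, define $\sigma$ to be complex conjugation with respect to the underlying real structure of $V$, and verify the three conditions. Condition~(b) is then automatic from the choice of $\sigma$; condition~(a) amounts to the statement that the normal form is attainable by an element of $K_u$; and condition~(c) follows by comparing $\dim_\mathbb{R}T$ with the known list of fundamental highest weights of $\mathbb{C}[V]$ tabulated in the classification of multiplicity-free spaces \cite{br,kac,leahy}.

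For the classical entries (1)--(6) I would take: in (1)~$T=\mathbb{R}\subset\mathbb{C}$; in (2)~$T=\mathbb{R}e_1$, with (a) following from transitivity of $SU(p)$ on spheres; in (3)~$T$ the space of real diagonal matrices, with (a) being Takagi's factorization $A=U\Sigma\,{}^tU$ for complex symmetric matrices; in (4)~$T$ the space of real block-diagonal alternating matrices with $2\times 2$ blocks, via the normal form of complex alternating matrices under the congruence action; in (5)~$T$ the space of real diagonal matrices, with (a) being the singular value decomposition; and in (6) $T=\mathbb{R}e_1+\sqrt{-1}\,\mathbb{R}e_2$, using the two $SO(p,\mathbb{R})\times S^1$-invariants $|v^Tv|$ and $\|v\|^2$ on $\mathbb{C}^p$. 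For the exceptional entry~(7), I would use the spectral decomposition of Hermitian matrices over the (complexified) Cayley algebra, which brings any element of $\mathfrak{J}_\mathbb{C}$ into a real diagonal element of $\operatorname{Herm}(3,\mathfrak{C})$ modulo the compact form of $E_6(\mathbb{C})\times\mathbb{C}^\times$. Finally, entry~(8) combines (2) and (4): first normalize the $\mathbb{C}^{2p}$-component to a real multiple of $e_1$ using $SU(2p)$ together with the central $\mathbb{C}^\times$, then exploit the residual stabilizer in $SL(2p,\mathbb{C})$ to reduce the alternating matrix to its real block-diagonal normal form.

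The main obstacle will be that in several cases the standard normal form is achieved only by the full complex group (or by a larger real group such as $O(p)$ or $U(p)$) and not directly by the compact form $K_u$ of the precise complex group appearing in Table~\ref{table:linear}. In each such case one must verify that the residual obstruction---typically a global phase or a sign coming from the determinant constraint cutting $SL$ out of $GL$---can be absorbed by the central $\mathbb{C}^\times$-factor. This is exactly the role of the $\mathbb{C}^\times$ in entries (3)--(8), and matching the power of the central character to the right normalization (for example the asymmetric weights $s^3$ versus $s^2$ in entry~(8)) is the delicate bookkeeping point. Once (a) and (b) are in place, the dimension count in~(c) reduces to a routine comparison against the tables of multiplicity-free spaces in \cite{br,kac,leahy}, and in fact the details have already been assembled in \cite{irr,red} so that the proof amounts to extracting from those sources the explicit slice $T$ together with its defining real structure.
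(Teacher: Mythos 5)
Your proposal is correct and follows essentially the same route as the paper: the slices you list case by case (real line, $\mathbb{R}e_1$, real diagonal matrices via Takagi/SVD, real block-diagonal alternating matrices, $\mathbb{R}e_1\oplus\sqrt{-1}\mathbb{R}e_2$, the real diagonal in $\mathfrak{J}_{\mathbb{C}}$, and the combination for entry (8)) are exactly the subspaces $\mathbb{R}$, $T_1$, $D_p$, $A_p$, $D_{1,1}$, $D_3$, $A_p\oplus T_p$ of Table \ref{table:choice}, with condition (a) checked by the same classical normal-form theorems and the determinant/phase obstruction absorbed into the central $\mathbb{C}^{\times}$ just as in \cite{irr,red}. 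The only point to make explicit is that in case (6) the conjugation must be taken with respect to the twisted real form $V_2$ (the map $\sigma_2$ of the paper) rather than the standard one, since $D_{1,1}$ is not contained in $\mathbb{R}^p$; your choice of $T$ already forces this, so nothing is lost.
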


Indeed, 
we choose $T$ and $\sigma $ as in Table \ref{table:choice}. 
Then, we can verify that 
$(T,\sigma )$ satisfies Lemma \ref{lem:linear}. 

\begin{table}[htbp]
$
\begin{array}{cccc}
\hline 
(K_{\mathbb{C}},V) & T & \sigma & \widetilde{\sigma} \\
\hline 
(1) & \mathbb{R} & \sigma _1 & \widetilde{\sigma }_0 \\
(2) & T_1 & \sigma _1 & \widetilde{\sigma}_1 \\
(3) & D_p & \sigma _1 & \widetilde{\sigma}_1\boxtimes \widetilde{\sigma}_0\\
(4) & A_p & \sigma _1 & \widetilde{\sigma}_1\boxtimes \widetilde{\sigma}_0\\
(5) & D_p & \sigma _1 
	& \widetilde{\sigma}_1\boxtimes \widetilde{\sigma}_1\boxtimes \widetilde{\sigma}_0\\
(6) & D_{1,1} & \sigma _2 & \widetilde{\sigma}_2\boxtimes \widetilde{\sigma}_0\\
(7) & D_3 & \sigma _1 & \widetilde{\sigma}_1\boxtimes \widetilde{\sigma}_0\\
(8) & A_p\oplus T_p & \sigma _1\oplus \sigma _1 
	& \widetilde{\sigma}_1\boxtimes \widetilde{\sigma}_0\\
\hline 
\end{array}
$
\caption{Our choice of $T$, $\sigma$, $\widetilde{\sigma}$ for the $K_u$-action on $V$}
\label{table:choice}
\end{table}

Here, let us explain the notation used in Table \ref{table:choice} as follows. 

First, 
let $\{ e_1,\ldots ,e_N\} $ be the standard basis of $\mathbb{C}^N$. 
We define two real subspaces $T_p,D_{1,1}$ in $\mathbb{C}^{N}$ by 
\begin{align*}
T_p&:=\mathbb{R}e_1\oplus \mathbb{R}e_3\oplus \mathbb{R}e_5\oplus \cdots \oplus \mathbb{R}e_{2N'-1}, \\
D_{1,1}&:=\mathbb{R}e_1\oplus \sqrt{-1}\mathbb{R}e_2, 
\end{align*}
where $N':=\lfloor \frac{N+1}{2}\rfloor $ denotes the maximam of integers which are not greater than $\frac{N+1}{2}$. 
Second, 
we denote by $D_N$ the real subspace of $M(N,\mathbb{C})$ 
consisting of diagonal matrices whose entries are all real, namely, 
\begin{align*}
D_N&:=\{ \operatorname{diag}(r_1,\ldots ,r_N)\in M(N,\mathbb{C}):
	r_1,\ldots ,r_N\in \mathbb{R}\} . 
\end{align*}
Third, 
we set a real subspace $A_{p}$ in $\operatorname{Alt}(2p,\mathbb{C})$ by 
\begin{align*}
A_p&:=\{ J(r_1,\ldots ,r_p)\in \operatorname{Alt}(2p,\mathbb{C}):
	r_1,\ldots ,r_p\in \mathbb{R}\} . 
\end{align*}
where $J(r_1,\ldots ,r_p)\in \operatorname{Alt}(2p,\mathbb{C})$ stands for the following 
block diagonal matrix 
\begin{align*}
J(r_1,\ldots ,r_p):=
\operatorname{diag}(r_1J_1,\ldots ,r_pJ_1),\quad 
J_1=\left( 
	\begin{array}{cc}
	0 & -1 \\
	1 & 0
	\end{array}
\right) . 
\end{align*}

On the other hand, 
we denote by $\sigma _1$ and $\sigma _2$ the complex conjugations of $\mathbb{C}^N$ 
with respect to real forms $V_1$ and $V_2$, respectively, where 
$N'=\lfloor \frac{N+1}{2}\rfloor $, $N''=\lfloor \frac{N}{2}\rfloor $ and 
\begin{align*}
V_1&:=\bigoplus _{i=1}^N\mathbb{R}e_i ,\quad 
V_2:=\bigoplus _{i=1}^{N'} \mathbb{R}e_{2i-1}\oplus 
\bigoplus _{i=1}^{N''} \sqrt{-1}\mathbb{R}e_{2i}. 
\end{align*}
By using the coordinate with respect to the standard basis $\{ e_1,\ldots ,e_N\} $, 
we write $\sigma _1$ and $\sigma _2$, respectively, as 
\begin{align*}
\sigma _1(v)&=\overline{v},\quad \sigma _2(v)=I_{a}\overline{v}\quad 
(v\in \mathbb{C}^N). 
\end{align*}
where $\varepsilon _i:=(-1)^{i+1}~(i=1,2,\ldots ,N)$ and 
\begin{align}
I_{a}:=\operatorname{diag}(\varepsilon _1,\ldots ,\varepsilon _N)\in M(N,\mathbb{C}). 
\label{eq:anti-diag}
\end{align}

For the standard basis $\{ E_{ij}:1\leq i,j\leq N\} $ of $M(N,\mathbb{C})$, 
we also define the standard real form 
$\bigoplus _{1\leq i,j\leq N}\mathbb{R}E_{ij}=M(N,\mathbb{R})$. 
With respect to $M(N,\mathbb{R})$, 
we define the complex conjugation of $M(N,\mathbb{C})$, 
which we use the same notation $\sigma _1 $ to denote. 

Suppose we take $T$ and $\sigma$ as in Table \ref{table:choice} 
for a multiplicity-free $K_{\mathbb{C}}$-action on $V$ 
listed in Table \ref{table:linear}. 
Then, we have: 

\begin{lemma}
\label{lem:linear2}
There exists a compatible automorphism $\widetilde{\sigma }\in \operatorname{Aut}K_{\mathbb{C}}$ 
with respect to $\sigma $ for the $K_{\mathbb{C}}$-action on $V$ 
(see Definition \ref{def:compatible}) 
such that $\widetilde{\sigma}$ stabilizes $K_u$ and 
$\operatorname{rank}_{\mathbb{R}}\operatorname{Lie}(K_{\mathbb{C}}^{\widetilde{\sigma}})
=\operatorname{rank}\operatorname{Lie}(K_{\mathbb{C}})$. 
\end{lemma}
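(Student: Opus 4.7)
The plan is to define $\widetilde{\sigma}$ explicitly case by case according to the last column of Table \ref{table:choice}, and then to verify three properties: (i) the compatibility $\sigma(g \cdot v) = \widetilde{\sigma}(g) \cdot \sigma(v)$ for all $g \in K_{\mathbb{C}}$ and $v \in V$; (ii) the $K_u$-stability $\widetilde{\sigma}(K_u) = K_u$; and (iii) the rank equality $\operatorname{rank}_{\mathbb{R}} \operatorname{Lie}(K_{\mathbb{C}}^{\widetilde{\sigma}}) = \operatorname{rank} \operatorname{Lie}(K_{\mathbb{C}})$, i.e.\ that the fixed subalgebra is a normal real form.

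The building blocks are three natural anti-holomorphic involutions: $\widetilde{\sigma}_0$ is ordinary complex conjugation on the factor $\mathbb{C}^{\times}$; $\widetilde{\sigma}_1(g) = \overline{g}$ is entry-wise complex conjugation on a classical matrix group; and $\widetilde{\sigma}_2(g) = I_a \overline{g}\, I_a$, using the matrix $I_a$ from (\ref{eq:anti-diag}) with $I_a^2 = I_N$. The symbol $\boxtimes$ in Table \ref{table:choice} denotes the corresponding product automorphism on a direct product of groups. Compatibility is then a direct computation: for $\sigma_1$ and $\widetilde{\sigma}_1$ one has $\sigma_1(g v) = \overline{g v} = \overline{g}\,\overline{v}$; for the twisted pair $(\sigma_2, \widetilde{\sigma}_2)$ the identity $I_a^2 = I_N$ yields $\sigma_2(g v) = I_a \overline{g}\,\overline{v} = (I_a \overline{g} I_a)(I_a \overline{v})$. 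For the quadratic actions $g \cdot A = g A\,{}^t g$ in cases (3), (4), (8), the identity $\overline{g A\,{}^t g} = \overline{g}\,\overline{A}\,{}^t \overline{g}$ handles the computation simultaneously on the symmetric and alternating subspaces, and for the two-sided action in (5) the same rule applied to both factors suffices. Stability of $K_u$ follows by inspection, since in every case $K_u$ is (up to a circle factor) a compact unitary/orthogonal group in the defining representation, and both entry-wise conjugation and conjugation by the real orthogonal matrix $I_a$ preserve unitarity.

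For the rank condition (iii), the fixed subgroup $K_{\mathbb{C}}^{\widetilde{\sigma}}$ is identified with a known normal real form in each classical case: $\widetilde{\sigma}_1$ cuts $SL(p, \mathbb{C})$ down to $SL(p, \mathbb{R})$ (and similarly for the direct-product cases (5) and (8)), while $\widetilde{\sigma}_2$ cuts $SO(p, \mathbb{C})$ down to $SO(\lceil p/2 \rceil, \lfloor p/2 \rfloor)$; both of these are the split real forms. Combined with $(\mathbb{C}^{\times})^{\widetilde{\sigma}_0} = \mathbb{R}^{\times}$, the real rank of the fixed subgroup then equals the complex rank of $K_{\mathbb{C}}$ in every classical case. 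The main obstacle is the exceptional case (7), where $K_{\mathbb{C}} = E_6(\mathbb{C}) \times \mathbb{C}^{\times}$ has complex rank $7$: one must check that, in a realization of the $E_6(\mathbb{C})$-action on $\mathfrak{J}_{\mathbb{C}} = \operatorname{Herm}(3, \mathfrak{C}_{\mathbb{C}})$ compatible with $\sigma_1$, the conjugation $\widetilde{\sigma}_1$ produces the split form $E_{6(6)}$ of real rank $6$ rather than a non-split form such as $E_{6(-26)}$. This identification is the only step that does not follow from a routine matrix calculation, and will require a separate argument, either via an explicit Jordan-algebraic presentation (for instance using the split octonions) exhibiting the split real structure on $\mathfrak{J}_{\mathbb{C}}$, or via the classification of real forms of $E_6$ combined with a rank count on the fixed Lie algebra.
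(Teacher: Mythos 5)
Your proposal follows the paper's proof essentially verbatim: the paper likewise defines $\widetilde{\sigma}_1(k)=\overline{k}$ on $SL(p,\mathbb{C})$, $\widetilde{\sigma}_2(k)=I_a^{-1}\overline{k}I_a$ on $SO(p,\mathbb{C})$ and $\widetilde{\sigma}_0(s)=\overline{s}$ on $\mathbb{C}^{\times}$, identifies the fixed subgroups as the split forms $SL(p,\mathbb{R})$, $SO(\lfloor p/2\rfloor,\lfloor\frac{p+1}{2}\rfloor)$ and $\mathbb{R}^{\times}$, and leaves the remaining verifications to ``direct computation.'' The one step you flag as needing a separate argument --- that in case (7) the fixed subgroup of $E_6(\mathbb{C})$ is the split form $E_{6(6)}$ of real rank $6$ rather than $E_{6(-26)}$ of real rank $2$ --- is a genuine subtlety that the printed proof does not address either; your instinct is correct that the real form of $\mathfrak{J}_{\mathbb{C}}$ must be the \emph{split} exceptional Jordan algebra (Hermitian matrices over the split octonions), and in the only places where case (7) is invoked (the $\mathfrak{e}_7$ and $\mathfrak{e}_8$ orbits with $\mathfrak{g}(2)\simeq\mathfrak{J}_{\mathbb{C}}$) this comes for free, since there the real structure on $\mathfrak{g}(2)$ is the one induced from the normal real form $\mathfrak{g}_{\mathbb{R}}$ and the corresponding Levi factor is split by Lemma \ref{lem:normal-l}. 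So your write-up is, if anything, more careful than the paper's on the only non-routine point.
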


\begin{proof}
For convenience, 
we let put $K_1:=SL(p,\mathbb{C})$ and $K_2:=SO(p,\mathbb{C})$. 
By using our matrix realization of $SL(p,\mathbb{C})$ 
and $SO(p,\mathbb{C})$, 
we define anti-holomorphic involutions $\widetilde{\sigma }_{i}$ 
on $K_i$ $(i=1,2)$ by 
\begin{align*}
\widetilde{\sigma }_{1}(k)&:=\overline{k}\quad (k\in K_1),\quad 
\widetilde{\sigma }_{2}(k):=I_{a}^{-1}\overline{k}I_{a}\quad (k\in K_2). 
\end{align*}
Then, the fixed point set $K_1^{\widetilde{\sigma }_{1}}$ coincides with 
$SL(p,\mathbb{R})$, and $K_2^{\widetilde{\sigma }_{2}}$ 
is isomorphic to the indefinite special orthogonal group 
$SO(\lfloor \frac{p}{2}\rfloor ,\lfloor \frac{p+1}{2}\rfloor )$. 
Clearly, 
$\operatorname{rank}_{\mathbb{R}}\operatorname{Lie}(K_1^{\widetilde{\sigma }_{1}})
=p=\operatorname{rank}\operatorname{Lie}(K_1)$ and 
$\operatorname{rank}_{\mathbb{R}}\operatorname{Lie}(K_2^{\widetilde{\sigma }_{2}})
=\lfloor \frac{p}{2}\rfloor =\operatorname{rank}\operatorname{Lie}(K_2)$. 
Similarly, we define an anti-holomorphic involution $\widetilde{\sigma }_{0}$ 
on $\mathbb{C}^{\times}$ 
by $\widetilde{\sigma }_{0}(s)=\overline{s}$ $(s\in \mathbb{C}^{\times})$. 
Then, $(\mathbb{C}^{\times})^{\widetilde{\sigma }_{0}}=\mathbb{R}^{\times}$. 

The right column of Table \ref{table:choice} gives our choice of 
anti-holomorphic involution $\widetilde{\sigma }$ on $K_{\mathbb{C}}$ 
for each strongly visible $K_u$-action on $V$. 
The direct computation shows that $\widetilde{\sigma }$ 
satisfies Lemma \ref{lem:linear2}. 
\end{proof}

Concerning to our choice of $\widetilde{\sigma }$ as in Table \ref{table:choice}, 
we denote by $\mu \boxtimes \mu '$ 
for involutions $\mu $ and $ \mu '$ on complex Lie groups $H_{\mathbb{C}}$ and $H_{\mathbb{C}}'$, 
respectively, 
the involution on 
$H_{\mathbb{C}}\times H_{\mathbb{C}}'$ defined by 
$(\mu \boxtimes \mu ')(h,h')=(\mu (h),\mu '(h'))$. 

\begin{rem}
Lemma \ref{lem:linear2} holds 
for all multiplicity-free actions, 
on which we will discuss in forthcoming paper \cite{compatible}. 
\end{rem}

\subsection{Procedure}
\label{subsec:notation}

In this subsection, 
we explain the procedure of our proof of Theorem \ref{thm:slice-n}. 

The standard basis $\{ e_1,\ldots ,e_N\} $ of $\mathbb{C}^N$ 
defines the standard real form 
$V_{\mathbb{R}}=V_1=\mathbb{R}^N$ 
and the standard Hermitian inner product $\langle \cdot ,\cdot \rangle $ 
satisfying $\langle e_i,e_j\rangle =\delta _{ij}$ $(1\leq i,j\leq N)$. 
The dual $\mathfrak{a}_{\mathbb{R}}^*$ of the Cartan subalgebra 
$\mathfrak{a}_{\mathbb{R}}\subset \mathfrak{g}_{\mathbb{R}}$ is realized 
as a subspace of $V_{\mathbb{R}}$, denoted by $V$. 
Let $\{ E_1,\ldots ,E_N\} $ be the dual basis of $\{ e_1,\ldots ,e_N\}$ 
with $\langle e_i,E_j\rangle =\delta _{ij}$ $(1\leq i,j\leq N)$. 
Then, $\mathfrak{a}_{\mathbb{R}}$ is isomorphic to 
$V^*\subset V_{\mathbb{R}}^*=\mathbb{R}E_1\oplus \cdots \oplus \mathbb{R}E_N$. 
Hence, $\mathfrak{a}=\mathfrak{a}_{\mathbb{R}}+\sqrt{-1}\mathfrak{a}_{\mathbb{R}}
\simeq V^*+\sqrt{-1}V^*$. 

We have seen in Lemma \ref{lem:rootspace} that 
the complex conjugation $\sigma$ with respect to $\mathfrak{g}_{\mathbb{R}}$ 
stabilizes the root space $\mathfrak{g}_{\alpha }$ $(\alpha \in \Delta )$. 
Then, we decompose $\mathfrak{g}_{\alpha}$ into the $\sigma$-eigenspaces as 
$\mathfrak{g}_{\alpha }=(\mathfrak{g}_{\alpha }\cap \mathfrak{g}_{\mathbb{R}})
+(\mathfrak{g}_{\alpha }\cap \sqrt{-1}\mathfrak{g}_{\mathbb{R}})$, 
equivalently, $\mathfrak{g}_{\alpha }$ is the complexification of the Lie algebra 
$\mathfrak{g}_{\alpha }\cap \mathfrak{g}_{\mathbb{R}}$. 
As $\dim _{\mathbb{C}}\mathfrak{g}_{\alpha }=1$, 
we have $\dim (\mathfrak{g}_{\alpha }\cap \mathfrak{g}_{\mathbb{R}})=1$. 
Thus, we express $\mathfrak{g}_{\alpha }\cap \mathfrak{g}_{\mathbb{R}}=\mathbb{R}E_{\alpha }$ 
for some root vector $E_{\alpha }$. 

Under this setting, we carry out for each spherical nilpotent orbit $\mathcal{O}_X$ 
in $\mathfrak{g}$ as follows: 
\begin{enumerate}
	\renewcommand{\labelenumi}{\theenumi. }
	\item Specify the characteristic element $H\in \mathfrak{a}_+$ 
	from the corresponding weighted Dynkin diagram $\Omega (\mathcal{O}_X)$ 
	(see (\ref{eq:weighted})). 
	\item Write $\mathfrak{l}=\mathfrak{g}(0)$, 
	$\mathfrak{g}(2)$, and $\mathfrak{g}(3)$, respectively, 
	as a direct sum of root spaces 
	(see Lemma \ref{lem:eigenspace}). 
	\item Verify that the $L_{\mathbb{C}}$-action on $\mathfrak{n}$ 
	is a multiplicity-free action comparing with Table \ref{table:linear}. 
	Then, 
	the $L_u$-action on $\mathfrak{n}$ is strongly visible (Fact \ref{fact:visible-linear}). 
	\item Give a slice $S_0$ for the $L_u$-action on $\mathfrak{n}$ explicitly 
	by using Table \ref{table:choice}. 
	In particular, 
	describe $S_0$ as 
	$S_0=\bigoplus _{\alpha \in \Delta ^+(\mathcal{O}_X)}\mathbb{R}E_{\alpha }$ 
	for some subset $\Delta ^+(\mathcal{O}_X)$ in $\Delta ^+$. 
\end{enumerate}
Owing to Lemmas \ref{lem:linear} and \ref{lem:linear2}, 
Theorem \ref{thm:slice-n} holds for the subspace 
$S_0$ which is constructed according to the above procedure. 



\subsection{Type A$_{n-1}$}
\label{subsec:a}

We begin with the case $\mathfrak{g}=\mathfrak{sl}(n,\mathbb{C})$ 
for integer $n\geq 2$. 
In this case, $\mathfrak{g}_{\mathbb{R}}=\mathfrak{sl}(n,\mathbb{R})$. 
Then, $\mathfrak{a}_{\mathbb{R}}^*=\{ a_1e_1+\cdots +a_ne_n:a_1,\ldots ,a_n\in \mathbb{R},~
a_1+\cdots +a_n=0\} $. 
A root system $\Delta \equiv \Delta (\mathfrak{g},\mathfrak{a})$ 
is $\Delta =\{ \pm (e_i-e_j):1\leq i<j\leq n\} $. 
We fix a positive system as $\Delta ^+=\{ e_i-e_j:1\leq i<j\leq n\} $. 
The simple roots 
$\alpha _1,\ldots ,\alpha _{n-1}$ are given by $\alpha _i=e_i-e_{i+1}$ $(1\leq i\leq n-1)$. 
The highest root $\beta $ is written as $\beta =e_1-e_n=\alpha _1+\cdots +\alpha _{n-1}$. 

Let $\mathcal{O}_X$ be a nilpotent orbit with 
characteristic element $H=h_1E_1+\cdots +h_nE_n\in \mathfrak{a}_+$ 
where 
$h_1+\cdots +h_n=0$ and $h_1\geq h_2\geq \cdots \geq h_n$. 
Then, $\alpha _i(H)=h_i-h_{i+1}$ $(1\leq i\leq n-1)$. 
Hence, the weighted Dynkin diagram 
$\Omega (\mathcal{O}_X)=(m_1,m_2,\ldots ,m_{n-1})$ is given 
by $(h_1-h_2,h_2-h_3,\ldots ,h_{n-1}-h_n)$. 

\begin{figure}[htbp]
\begin{align*}
\SelectTips{cm}{12}
	\objectmargin={1pt}
	\xygraph{
		\circ ([]!{+(0,-.3)} {\alpha_1},[]!{+(0,+.3)} {m_1}) - [r] 
		\circ ([]!{+(0,-.3)} {\alpha_2},[]!{+(0,+.3)} {m_2}) - [r] 
		\cdots - [r] 
		\circ ([]!{+(0,-.3)} {\alpha_{n-2}},[]!{+(0,+.3)} {m_{n-2}}) - [r] 
		\circ ([]!{+(0,-.3)} {\alpha_{n-1}},[]!{+(0,+.3)} {m_{n-1}}) 
	}
\end{align*}
\caption{Weighted Dynkin diagram for $\mathfrak{sl}(n,\mathbb{C})$}
\label{fig:A}
\end{figure}

A nilpotent orbit $\mathcal{O}_X$ in $\mathfrak{sl}(n,\mathbb{C})$ is spherical 
if and only if $\Omega (\mathcal{O}_X)$ coincides 
with either (A) or (A$'$): 
\begin{enumerate}
	\item[(A)] 
	$\Omega (\mathcal{O}_X)=(\underbrace{0,\ldots ,0}_{p-1},1,0,\ldots 0,1,
	\underbrace{0,\ldots ,0}_{p-1})$
	for $1\leq p<\frac{n}{2}$, namely, 
	$m_p=m_{n-p}=1$ and $m_i=0$ for $i\neq p,n-p$. 
	\item[(A$'$)] 
	$n=2p$ and 
	$\Omega (\mathcal{O}_X)=(\underbrace{0,\ldots ,0}_{p-1},2,
	\underbrace{0,\ldots ,0}_{p-1})$, namely, 
	$m_p=2$ and $m_i=0$ for $i\neq p$. 
\end{enumerate}

For each case, it follows from Lemma \ref{lem:height-max} 
that its height $\operatorname{ht}(\mathcal{O}_X)$ equals two. 
Then, the nilpotent subalgebra $\mathfrak{n}$ coincides with $\mathfrak{g}(2)$. 

\subsubsection{Case (A)}
\label{subsubsec:a}

Let $\Omega (\mathcal{O}_X)$ satisfy Case (A) for $1\leq p<\frac{n}{2}$. 
Since $h_p-h_{p+1}=h_{n-p}-h_{n-p+1}=1$ and $h_i-h_{i+1}=0$ $(i\neq p,n-p)$, 
$H$ forms 
\begin{align*}
H=(E_1+\cdots +E_p)-(E_{n-p+1}+\cdots +E_n). 
\end{align*}
The nilpotent orbit $\mathcal{O}_X$ with the above $\Omega (\mathcal{O}_X)$ 
consists of complex matrices of degree $n$ with Jordan type $(2^p,1^{n-2p})$. 

The Levi subalgebra $\mathfrak{l}=\mathfrak{g}(0)$ is given as follows: 
\begin{multline*}
\mathfrak{l}=\mathfrak{a}\oplus \bigoplus _{1\leq i<j\leq p}\mathfrak{g}_{\pm (e_i-e_j)}
	\oplus \bigoplus _{p+1\leq i<j\leq n-p}\mathfrak{g}_{\pm (e_{i}-e_{j})}
	\oplus \bigoplus _{n-p+1\leq i<j\leq n}\mathfrak{g}_{\pm (e_{i}-e_{j})}. 
\end{multline*}
This means that 
\begin{align*}
\mathfrak{l}&\simeq \mathfrak{sl}(p,\mathbb{C})\oplus \mathfrak{sl}(n-2p,\mathbb{C})
	\oplus \mathfrak{sl}(p,\mathbb{C})\oplus \mathbb{C}^2. 
\end{align*}

The $\operatorname{ad}(H)$-eigenspace $\mathfrak{g}(2)$ is written as 
\begin{align*}
\mathfrak{g}(2)&=\bigoplus _{1\leq i,j\leq p}\mathfrak{g}_{e_i-e_{n-p+j}}. 
\end{align*}
Then, $\mathfrak{g}(2)$ is isomorphic to $M(p,\mathbb{C})$, 
from which 
\begin{align}
\mathfrak{n}&\simeq M(p,\mathbb{C}). 
\label{eq:n-a}
\end{align}

The semisimple part 
$SL(p,\mathbb{C})\times SL(n-2p,\mathbb{C})\times SL(p,\mathbb{C})$ 
of the Levi subgroup $L_{\mathbb{C}}$ acts on $M(p,\mathbb{C})$ by 
\begin{align*}
(g_1,h,g_2)\cdot A
=g_1Ag_2^{-1}, 
\end{align*}
and the center $(\mathbb{C}^{\times})^2$ of $L_{\mathbb{C}}$ as the scalar multiplication 
as follows: 
\begin{align*}
(s,t)\cdot A=stA. 
\end{align*}
Then, 
the $L_{\mathbb{C}}$-action on $\mathfrak{n}$ is geometrically equivalent to 
the irreducible action 
of $SL(p,\mathbb{C})\times SL(p,\mathbb{C})\times \mathbb{C}^{\times}$ on $M(p,\mathbb{C})$. 
It follows from (5) of Table \ref{table:linear} that 
this action is a multiplicity-free action. 

We take the subset $S_0$ in $\mathfrak{n}$ as 
\begin{align}
S_0:&=\bigoplus _{1\leq i\leq p} \mathbb{R}E_{e_i-e_{n-p+i}}. 
\label{eq:slice-a}
\end{align}
Then, $S_0$ is isomorphic to the slice $D_p$ of Table \ref{table:choice} 
for the strongly visible $(SU(p)\times SU(p)\times \mathbb{T})$-action 
on $M(p,\mathbb{C})$. 
By Lemma \ref{lem:linear}, 
the vector space $S_0$ satisfies $\mathfrak{n}=L_u\cdot S_0$. 
Therefore, we have verified Theorem \ref{thm:slice-n} for Case (A). 

\subsubsection{Case (A$'$)}

In case of $n=2p$ and 
$\Omega (\mathcal{O}_X)$ satisfies Case (A$'$), 
the characteristic element $H\in \mathfrak{a}$ is of the form 
\begin{align*}
H=(E_1+\cdots +E_p)-(E_{p+1}+\cdots +E_{2p}). 
\end{align*}
Then, $\mathfrak{l}\simeq \mathfrak{sl}(p,\mathbb{C})\oplus \mathfrak{sl}(p,\mathbb{C})\oplus \mathbb{C}^2$ 
and $\mathfrak{n}\simeq M(p,\mathbb{C})$. 
Hence, the $L_{\mathbb{C}}$-action on $\mathfrak{n}$ is a multiplicity-free action. 
Therefore, 
the subset $S_0$ defined by (\ref{eq:slice-a}) satisfies Theorem \ref{thm:slice-n}, 
from which we have verified for Case (A$'$). 


\subsection{Type B$_n$}
\label{subsec:b}

In this subsection, 
we give a proof of Theorem \ref{thm:slice-n} for $\mathfrak{g}=\mathfrak{so}(2n+1,\mathbb{C})$ 
for positive integer $n\geq 2$. 
In this case, 
$\mathfrak{g}_{\mathbb{R}}$ is isomorphic to $\mathfrak{so}(n+1,n)$. 
Then, we have 
$\mathfrak{a}^*_{\mathbb{R}}=V_{\mathbb{R}}$. 
A root system $\Delta \equiv \Delta (\mathfrak{g},\mathfrak{a})$ is 
$\Delta =\{ \pm e_i\pm e_j:1\leq i<j\leq n\} \sqcup \{ \pm e_i:1\leq i\leq n\} $. 
We fix a positive system as $\Delta ^+=\{ e_i\pm e_j:1\leq i<j\leq n\} \sqcup \{ e_i:
1\leq i\leq n\} $. 
The simple roots $\alpha _1,\ldots ,\alpha _n$ are 
given by 
$\alpha _i=e_i-e_{i+1}$ $(1\leq i\leq n-1)$, and $\alpha _n=e_n$. 
The highest root $\beta $ is written as 
$\beta =e_1+e_2=\alpha _1+2\alpha _2+\cdots +2\alpha _n$. 

Let $\mathcal{O}_X$ be a nilpotent orbit with characteristic element $H=h_1E_1+\cdots +h_nE_n\in \mathfrak{a}_+$ 
with $h_1\geq \cdots \geq h_n\geq 0$. 
Then, we have 
$\alpha _i(H)=h_i-h_{i+1}\ (1\leq i\leq n-1)$, and 
$\alpha _n(H)=h_n$. 
Thus, the weighted Dynkin diagram $\Omega (\mathcal{O}_X)
=(m_1,\ldots ,m_{n-1},m_n)$ is given by 
$(h_1-h_2,\ldots ,h_{n-1}-h_n,h_n)$. 

\begin{figure}[htbp]
\begin{align*}
\SelectTips{cm}{12}
	\objectmargin={1pt}
	\xygraph{!~:{@{=>}}
		\circ ([]!{+(0,-.3)} {\alpha_1},[]!{+(0,+.3)} {m_1}) - [r] 
		\circ ([]!{+(0,-.3)} {\alpha_2},[]!{+(0,+.3)} {m_2}) - [r] \cdots - [r]
		\circ ([]!{+(0,-.3)} {\alpha_{n - 1}},[]!{+(0,+.3)} {m_{n-1}}) : [r] 
		\circ ([]!{+(0,-.3)} {\alpha_n},[]!{+(0,+.3)} {m_n})
	}
\end{align*}
\caption{Weighted Dynkin diagram of $\mathcal{O}_X$ in $\mathfrak{so}(2n+1,\mathbb{C})$}
\end{figure}

A nilpotent orbit $\mathcal{O}_X$ in $\mathfrak{so}(2n+1,\mathbb{C})$ 
is spherical if and only if $\Omega (\mathcal{O}_X)$ forms one of the following cases: 
\begin{enumerate}
	\renewcommand{\labelenumi}{(B\theenumi) }
	\item $\Omega (\mathcal{O}_X)=(2,0,\ldots ,0)$, 
	namely, $m_1=2$ and $m_i=0~(i\neq 1)$. 
	\label{case:b1}
	
	\item $\Omega (\mathcal{O}_X)=(\underbrace{0,\ldots ,0}_{2p-1},1,0,\ldots ,0)$ 
	for $1\leq p\leq \frac{n}{2}$, 
	namely, $m_{2p}=1$ and $m_i=0~(i\neq 2p)$. 
	\label{case:b2}
	
	\item $\Omega (\mathcal{O}_X)=(1,\underbrace{0,\ldots ,0}_{2p-1},1,0,\ldots ,0)$ 
	for $1\leq p\leq \frac{n-1}{2}$, 
	namely, 
	$m_1=m_{2p+1}=1$ and $m_i=0~(i\neq 1,2p+1)$. 
	\label{case:b3}
\end{enumerate}

By Lemma \ref{lem:height-max}, 
its height $\operatorname{ht}(\mathcal{O}_X)$ equals two for Cases (B\ref{case:b1}), (B\ref{case:b2}), 
and three for Case (B\ref{case:b3}).

\subsubsection{Case (B\ref{case:b1})}

Let us consider the case $\Omega (\mathcal{O}_X)=(2,0,\ldots ,0)$. 
Then, $H$ is given by 
\begin{align*}
H=2E_1. 
\end{align*}
This $\mathcal{O}_X$ consists of complex matrices with Jordan type $(3,1^{2n-2})$. 

The Levi subalgebra $\mathfrak{l}=\mathfrak{g}(0)$ is given by 
\begin{align*}
\mathfrak{l}
=\mathfrak{a}\oplus \bigoplus _{2\leq i<j\leq n}\mathfrak{g}_{\pm e_{i}\pm e_{j}}
	\oplus \bigoplus _{2\leq i\leq n}\mathfrak{g}_{\pm e_{i}}. 
\end{align*}
Then, 
\begin{align*}
\mathfrak{l}\simeq \mathfrak{so}(2n-1,\mathbb{C})\oplus \mathbb{C}. 
\end{align*}

The $\operatorname{ad}(H)$-eigenspace $\mathfrak{g}(2)$ is written as 
\begin{align*}
\mathfrak{g}(2)=\bigoplus _{2\leq j\leq n}\mathfrak{g}_{e_1\pm e_{j}}
\oplus \mathfrak{g}_{e_1}. 
\end{align*}
Then, $\mathfrak{g}(2)$ is isomorphic to $\mathbb{C}^{2n-1}$. 
As $\operatorname{ht}(\mathcal{O}_X)=2$, 
the nilpotent subalgebra $\mathfrak{n}$ coincides with $\mathfrak{g}(2)$, namely, 
\begin{align*}
\mathfrak{n}\simeq \mathbb{C}^{2n-1}. 
\end{align*}

The semisimple part $SO(2n-1,\mathbb{C})$ of the Levi subgroup $L_{\mathbb{C}}$ acts 
on $\mathbb{C}^{2n-1}$ as the standard action, namely, 
\begin{align*}
g\cdot v=gv, 
\end{align*}
and its center $\mathbb{C}^{\times}$ acts as the scalar multiplication. 
This implies that the $L_{\mathbb{C}}$-action on $\mathfrak{n}$ is geometrically equivalent 
to the $(SO(2n-1,\mathbb{C})\times \mathbb{C}^{\times})$-action on $\mathbb{C}^{2n-1}$. 
It follows from (6) of Table \ref{table:linear} that 
this action is a multiplicity-free action. 

We take the subset $S_0$ in $\mathfrak{n}$ as 
\begin{align*}
S_0:=\mathbb{R}E_{e_1+e_2}\oplus \mathbb{R}E_{e_1-e_2}. 
\end{align*}
Then, $S_0$ is isomorphic to the slice $D_{1,1}$ of Table \ref{table:choice} 
for the strongly visible $(SO(2n-1)\times \mathbb{T})$-action on $\mathbb{C}^{2p-1}$. 
By Lemma \ref{lem:linear}, 
$S_0$ satisfies $\mathfrak{n}=L_u\cdot S_0$. 
Therefore, we have verified Theorem \ref{thm:slice-n} for Case (B\ref{case:b1}). 

\subsubsection{Case (B\ref{case:b2})}
\label{subsubsec:b}

Let $\Omega (\mathcal{O}_X)$ satisfy $m_{2p}=1$ and $m_i=0~(i\neq 2p)$ for $1\leq p\leq \frac{n}{2}$. 
Then, 
\begin{align*}
H=E_1+E_2+\cdots +E_{2p}. 
\end{align*}
This $\mathcal{O}_X$ consists of complex matrices with Jordan type $(2^{2p},1^{2n-4p+1})$. 
In particular, $\mathcal{O}_X$ with 
$\Omega (\mathcal{O}_X)=(0,1,0,0,\ldots ,0)$ ($p=1$) 
is the minimal nilpotent orbit. 

The Levi subalgebra $\mathfrak{l}=\mathfrak{g}(0)$ is given by 
\begin{align*}
\mathfrak{l}=\mathfrak{a}\oplus \bigoplus _{1\leq i<j\leq 2p}\mathfrak{g}_{\pm (e_i-e_j)} 
	\oplus \bigoplus _{2p+1\leq i<j\leq n}\mathfrak{g}_{\pm e_{i}\pm e_{j}}
	\oplus \bigoplus _{2p+1\leq i\leq n}\mathfrak{g}_{\pm e_{i}}
\end{align*}
This means that 
\begin{align*}
\mathfrak{l}\simeq \mathfrak{sl}(2p,\mathbb{C})\oplus \mathfrak{so}(2n-4p+1,\mathbb{C})
	\oplus \mathbb{C}, 
\end{align*}

The $\operatorname{ad}(H)$-eigenspace $\mathfrak{g}(2)$ is written as 
\begin{align*}
\mathfrak{g}(2)=\bigoplus _{1\leq i<j\leq 2p}\mathfrak{g}_{e_i+e_j}
\end{align*}
Then, $\mathfrak{g}(2)$ is isomorphic to $\operatorname{Alt}(2p,\mathbb{C})$ 
As $\operatorname{ht}(\mathcal{O}_X)=2$, 
\begin{align*}
\mathfrak{n}=\mathfrak{g}(2)\simeq \operatorname{Alt}(2p,\mathbb{C}). 
\end{align*}

The semisimple part of the Levi subgroup $L_{\mathbb{C}}$ is isomorphic to 
$SL(2p,\mathbb{C})\times SO(2n-4p+1,\mathbb{C})$. 
Then, $SL(2p,\mathbb{C})$ acts on $\operatorname{Alt}(2p,\mathbb{C})$ by 
\begin{align*}
g\cdot A=gA\,{}^t\!g,
\end{align*}
and $SO(2n-4p+1,\mathbb{C})$ acts trivially. 
Further, its center $\mathbb{C}^{\times}$ acts as the scalar multiplication. 
This implies that the $L_{\mathbb{C}}$-action on $\mathfrak{n}$ 
is geometrically equivalent to 
the action of $SL(2p,\mathbb{C})\times \mathbb{C}^{\times}$ on $\operatorname{Alt}(2p,\mathbb{C})$. 
It follows from (4) of Table \ref{table:linear} that 
this action is a multiplicity-free action. 

We take $S_0$ as 
\begin{align*}
S_0:=\bigoplus _{1\leq i\leq p} \mathbb{R}E_{e_{2i-1}+e_{2i}}. 
\end{align*}
Then, $S_0$ is isomorphic to the slice $A_p$ of Table \ref{table:choice} 
for the $(SU(2p)\times \mathbb{T})$-action on $\operatorname{Alt}(2p,\mathbb{C})$. 
By Lemma \ref{lem:linear}, 
Theorem \ref{thm:slice-n} holds for Case (B\ref{case:b2}). 

\subsubsection{Case (B\ref{case:b3})}

Let $\Omega (\mathcal{O}_X)$ satisfy $m_1=m_{2p+1}=1$ and 
$m_i=0$ for $1\leq p\leq \frac{n-1}{2}$. 
Then, $\operatorname{ht}(\mathcal{O}_X)=3$ and 
\begin{align*}
H=2E_1+E_2+E_3+\cdots +E_{2p+1}. 
\end{align*}
This $\mathcal{O}_X$ consists of complex matrices with Jordan type $(3,2^{2p},1^{2n-4p-2})$. 

We divide Case (B\ref{case:b3}) into two cases: $n\neq 2p-1$; and $n=2p-1$. 

First, let us consider the general case $n\neq 2p-1$. 
Then, the Levi subalgebra $\mathfrak{l}=\mathfrak{g}(0)$ is given by 
\begin{multline*}
\mathfrak{l}=\mathfrak{a}\oplus 
	\bigoplus _{2\leq i<j\leq 2p+1}\mathfrak{g}_{\pm (e_{i}-e_{j})}
	\oplus 
	\bigoplus _{2p+2\leq i<j\leq n}\mathfrak{g}_{\pm e_{i}\pm e_{j}}
	\oplus \bigoplus _{2p+2\leq i\leq n}\mathfrak{g}_{\pm e_{i}}. 
\end{multline*}
This means that 
\begin{align*}
\mathfrak{l}\simeq \mathfrak{sl}(2p,\mathbb{C})\oplus \mathfrak{so}(2n-4p-1,\mathbb{C})\oplus 
	\mathbb{C}^2. 
\end{align*}

The $\operatorname{ad}(H)$-eigenspace $\mathfrak{g}(2)$ is written as 
\begin{align*}
\mathfrak{g}(2)&=\mathfrak{g}_{e_1}
	\oplus \bigoplus _{2p+2\leq j\leq n}\mathfrak{g}_{e_1\pm e_{j}}
	\oplus \bigoplus _{2\leq i<j\leq 2p+1}\mathfrak{g}_{e_{i}+e_{j}}. 
\end{align*}
Then, $\mathfrak{g}(2)$ is isomorphic to $\mathbb{C}^{2n-4p-1}\oplus \operatorname{Alt}(2p,\mathbb{C})$. 
Further, $\mathfrak{g}(3)$ is of the form 
\begin{align*}
\mathfrak{g}(3)&=\bigoplus _{2\leq j\leq 2p+1}\mathfrak{g}_{e_1+e_{j}}
\simeq \mathbb{C}^{2p}. 
\end{align*}
Hence, $\mathfrak{n}$ is isomorphic to 
\begin{align}
\mathfrak{n}=\mathfrak{g}(2)\oplus \mathfrak{g}(3)\simeq 
\mathbb{C}^{2n-4p-1}\oplus \operatorname{Alt}(2p,\mathbb{C})\oplus \mathbb{C}^{2p}. 
\label{eq:n-b3}
\end{align}

The semisimple part $SL(2p,\mathbb{C})\times SO(2n-4p-1,\mathbb{C})$ of $L_{\mathbb{C}}$ 
acts on $\mathbb{C}^{2n-4p-1}\oplus \operatorname{Alt}(2p,\mathbb{C})\oplus \mathbb{C}^{2p}$ 
by 
\begin{align*}
(g,h)\cdot (v,A,w)=(hv,gA\,{}^t\!g,gw), 
\end{align*}
and its center $(\mathbb{C}^{\times})^2$ acts by 
\begin{align*}
(s,t)\cdot (v,A,w)=(sv,t^2A,stw). 
\end{align*}
Then, 
the $L_{\mathbb{C}}$-action on $\mathfrak{n}$ is geometrically equivalent to 
the decomposable action 
consisting of the indecomposable $(SL(2p,\mathbb{C})\times \mathbb{C}^{\times})$-action on 
$\operatorname{Alt}(2p,\mathbb{C})\oplus \mathbb{C}^{2p}$ 
((8) of Table \ref{table:linear}) 
and the irreducible $(SO(2n-4p-1,\mathbb{C})\times \mathbb{C}^{\times})$-action 
on $\mathbb{C}^{2n-4p-1}$ ((6) of Table \ref{table:linear}). 
Hence, this action is a multiplicity-free action. 

Our slice for this action is the direct sum the slices $S_0'$ and $S_0''$ 
are isomorphic to the slice $A_p\oplus T_p$ of Table \ref{table:choice} 
for the action of $SU(2p)\times \mathbb{T}$ on 
$\operatorname{Alt}(2p,\mathbb{C})\oplus \mathbb{C}^{2p}$ 
and $D_{1,1}$ for the action of $SO(2n-4p-1)\times \mathbb{T}$ on $\mathbb{C}^{2n-4p-1}$, respectively. 
In fact, we define 
\begin{align}
S_0'&:=\bigoplus _{1\leq i\leq p}\mathbb{R}E_{e_{2i}+e_{2i+1}}\oplus 
	\bigoplus _{1\leq j\leq p} \mathbb{R}E_{e_1+e_{2j}}, 
\label{eq:s0'-b3}
\end{align}
and
\begin{align*}
S_0''&:=\mathbb{R}E_{e_1+e_{2p+2}}\oplus \mathbb{R}E_{e_1-e_{2p+2}}. 
\end{align*}
By Lemma \ref{lem:linear}, the subspace 
\begin{align}
S_0:=S_0'\oplus S_0''\simeq (D_{1,1}\oplus A_p)\oplus T_p.
\label{eq:slice-b3}
\end{align}
satisfies $\mathfrak{n}=L_u\cdot S_0$. 
Therefore, Theorem \ref{thm:slice-n} has been proved for Case (B\ref{case:b3}) with $n\neq 2p-1$. 

In the special case where $\mathfrak{g}=\mathfrak{so}(4p+3,\mathbb{C})$ 
and $\Omega (\mathcal{O}_X)=(1,0,\ldots ,0,1)$, 
The Levi subalgebra $\mathfrak{l}$ is given by 
\begin{align*}
\mathfrak{l}=\mathfrak{a}\oplus 
	\bigoplus _{2\leq i<j\leq 2p+1}\mathfrak{g}_{\pm (e_{i}-e_{j})}
\simeq \mathfrak{sl}(2p,\mathbb{C})\oplus \mathbb{C}^2. 
\end{align*}
The $\operatorname{ad}(H)$-eigenspaces $\mathfrak{g}(2),\mathfrak{g}(3)$ are written as 
\begin{align*}
\mathfrak{g}(2)&=\mathfrak{g}_{e_1}
	\oplus \bigoplus _{2\leq i<j\leq 2p+1}\mathfrak{g}_{e_{i}+e_{j}}
	\simeq \mathbb{C}\oplus \operatorname{Alt}(2p,\mathbb{C}), \\
\mathfrak{g}(3)&=\bigoplus _{2\leq j\leq 2p+1}\mathfrak{g}_{e_1+e_{j}}
\simeq \mathbb{C}^{2p}. 
\end{align*}

We take $S_0'\simeq T_3^p\oplus T_p$ as in (\ref{eq:s0'-b3}) and 
$S_0''$ as 
\begin{align*}
S_0'':=\mathbb{R}E_{e_1}\simeq \mathbb{R}. 
\end{align*}
By Lemma \ref{lem:linear}, the vector space 
\begin{align*}
S_0:=S_0'\oplus S_0''\simeq (\mathbb{R}\oplus A_p)\oplus T_p
\end{align*}
satisfies $\mathfrak{n}=L_u\cdot S_0$. 
Hence, Theorem \ref{thm:slice-n} has been proved for Case (B\ref{case:b3}). 


\subsection{Type C$_n$}
\label{subsec:c}

In this subsection, 
we give a proof of Theorem \ref{thm:slice-n} for $\mathfrak{g}=\mathfrak{sp}(n,\mathbb{C})$. 
In this case, $\mathfrak{g}_{\mathbb{R}}=\mathfrak{sp}(n,\mathbb{R})$. 
Then, $\mathfrak{a}^*_{\mathbb{R}}\simeq V_{\mathbb{R}}$. 
A root system $\Delta \equiv \Delta (\mathfrak{g},\mathfrak{a})$ is 
$\Delta =\{ \pm e_i\pm e_j:1\leq i<j\leq n\} \sqcup \{ \pm 2e_i:1\leq i\leq n\}$. 
We fix a positive system $\Delta ^+$ as 
$\Delta ^+=\{ e_i\pm e_j:1\leq i<j\leq n\} 
\sqcup \{ 2e_i:1\leq i\leq n\}$. 
The simple roots $\alpha _1,\ldots ,\alpha _n$ is 
given by $\alpha _i=e_i-e_{i+1}$ $(1\leq i\leq n-1)$ and $\alpha _n=2e_n$. 
The highest root $\beta $ is written as $\beta =2e_1=2\alpha _1+2\alpha _2+
\cdots +2\alpha _{n-1}+\alpha _n$. 

Let $\mathcal{O}_X$ be a nilpotent orbit with characteristic element $H=h_1E_1+\cdots +h_nE_n\in \mathfrak{a}_+$ 
with $h_1\geq \cdots \geq h_n\geq 0$. 
Then, $m_i=h_i-h_{i+1}$ $(i=1,2,\ldots ,n-1)$, and $m_n=2h_n$. 
Hence, the weighted Dynkin diagram $\Omega (\mathcal{O}_X)=(m_1,\ldots ,m_n)$ is given by 
$\Omega (\mathcal{O}_X)=(h_1-h_2,\ldots ,h_{n-1}-h_n,2h_n)$. 

\begin{figure}[htbp]
\begin{align*}
\SelectTips{cm}{12}
	\objectmargin={1pt}
	\xygraph{!~:{@{<=}}
		\circ ([]!{+(0,-.3)} {\alpha_1},[]!{+(0,+.3)} {m_1}) - [r]
		\circ ([]!{+(0,-.3)} {\alpha_{2}},[]!{+(0,+.3)} {m_2}) - [r] 
		\cdots - [r] 
		\circ ([]!{+(0,-.3)} {\alpha_{n - 1}},[]!{+(0,+.3)} {m_{n-1}}) : [r]
		\circ ([]!{+(0,-.3)} {\alpha_n},[]!{+(0,+.3)} {m_n})
	} 
\end{align*}
\caption{Weighted Dynkin diagram of $\mathcal{O}_X$ in $\mathfrak{sp}(n,\mathbb{C})$}
\end{figure}

A nilpotent orbit $\mathcal{O}_X$ in $\mathfrak{sp}(n,\mathbb{C})$ is spherical 
if and only if $\Omega (\mathcal{O}_X)$ satisfies either Case (C) or Case (C$'$): 

\begin{enumerate}
	\item[(C)] $\Omega (\mathcal{O}_X)=(\underbrace{0,\ldots ,0}_{p-1},1,0,\ldots ,0)$ 
	for $1\leq p<n$, namely, 
	$m_p=1$ and $m_i=0$ $(i\neq p)$. 
	
	\item[(C$'$)] $\Omega (\mathcal{O}_X)=(0,\ldots ,0,2)$, namely, 
	$m_n=2$ and $m_i=0$ $(i\neq n)$. 
\end{enumerate}
Then, the characteristic element $H\in \mathfrak{a}$ is written as 
\begin{align}
H=E_1+\cdots +E_p 
\label{eq:H-typec}
\end{align}
for each $\Omega (\mathcal{O}_X)$. 
It follows from Lemma \ref{lem:height-max} that 
the height of $\mathcal{O}_X$ equals two. 
Further, 
Such $\mathcal{O}_X$ consists of complex matrices 
with Jordan type $(2^p,1^{2n-2p})$ $(1\leq p\leq n)$. 
In particular, 
$\mathcal{O}_X$ with weighted Dynkin diagram $\Omega (\mathcal{O}_X)=(1,0,\ldots ,0)$ 
is the minimal nilpotent orbit. 

First, let us consider the general $p\neq n$. 
The Levi subalgebra $\mathfrak{l}=\mathfrak{g}(0)$ is given by 
\begin{align*}
\mathfrak{l}=\mathfrak{a}\oplus \bigoplus _{1\leq i<j\leq p}\mathfrak{g}_{\pm (e_i-e_j)}
	\oplus \bigoplus _{p+1\leq i<j\leq n}\mathfrak{g}_{\pm e_{i}\pm e_{j}}
	\oplus \bigoplus _{p+1\leq i\leq n}\mathfrak{g}_{\pm 2e_{i}}. 
\end{align*}
This means that 
\begin{align*}
\mathfrak{l}\simeq \mathfrak{sl}(p,\mathbb{C})\oplus \mathfrak{sp}(n-p,\mathbb{C})
	\oplus \mathbb{C}. 
\end{align*}

The $\operatorname{ad}(H)$-eigenspace $\mathfrak{g}(2)$ is written as 
\begin{align*}
\mathfrak{g}(2)&=\bigoplus _{1\leq i<j\leq p}\mathfrak{g}_{e_i+e_j}
	\oplus \bigoplus _{1\leq i\leq p}\mathfrak{g}_{2e_i}. 
\end{align*}
Then, $\mathfrak{g}(2)$ is isomorphic to $\operatorname{Sym}(p,\mathbb{C})$, from which 
\begin{align*}
\mathfrak{n}\simeq \operatorname{Sym}(p,\mathbb{C}). 
\end{align*}

The action of 
the semisimple part $SL(p,\mathbb{C})\times Sp(n-p,\mathbb{C})$ on 
$\operatorname{Sym}(p,\mathbb{C})$ is written as follows: 
$SL(p,\mathbb{C})$ acts by 
\begin{align*}
g\cdot A=gA\,{}^tg, 
\end{align*}
and $Sp(n-p,\mathbb{C})$ acts trivially. 
Its center $\mathbb{C}^{\times}$ acts as the scalar multiplication. 
Then, the $L_{\mathbb{C}}$-action  on $\mathfrak{n}$ is geometrically equivalent 
to the $(SL(p,\mathbb{C})\times \mathbb{C}^{\times})$-action 
on $\operatorname{Sym}(p,\mathbb{C})$. 
It follows from (3) of Table \ref{table:linear} that 
this action is a multiplicity-free action. 

We take the subset $S_0$ as 
\begin{align}
S_0=\bigoplus _{1\leq i\leq p} \mathbb{R}E_{2e_i}. 
\label{eq:slice-c}
\end{align}
Then, $S_0$ is isomorphic to the slice $D_p$ of Table \ref{table:choice} 
for the strongly visible $(SU(p)\times \mathbb{T})$-action $\operatorname{Sym}(p,\mathbb{C})$. 
By Lemma \ref{lem:linear}, 
this $S_0$ satisfies $\mathfrak{n}=L_u\cdot S_0$. 

In case of $p=n$, 
the Levi subalgebra $\mathfrak{l}$ is 
\begin{align*}
\mathfrak{l}=\mathfrak{a}\oplus \bigoplus _{1\leq i<j\leq n}\mathfrak{g}_{\pm (e_i-e_j)}
\simeq \mathfrak{sl}(n,\mathbb{C})\oplus \mathbb{C}, 
\end{align*}
and $\mathfrak{g}(2)$ is 
\begin{align*}
\mathfrak{g}(2)&=\bigoplus _{1\leq i<j\leq n}\mathfrak{g}_{e_i+e_j}
	\oplus \bigoplus _{1\leq i\leq n}\mathfrak{g}_{2e_i}\simeq 
	\operatorname{Sym}(n,\mathbb{C}). 
\end{align*}
Then, the $L_{\mathbb{C}}$-action on $\mathfrak{n}$ is 
geometrically equivalent to the $(SL(n,\mathbb{C})\times \mathbb{C}^{\times})$-action 
on $\operatorname{Sym}(n,\mathbb{C})$. 
Hence, the equation $\mathfrak{n}=L_u\cdot S_0$ holds for 
the subset $S_0$ defined by (\ref{eq:slice-c}) 

Therefore, Theorem \ref{thm:slice-n} has been verified 
for $\mathfrak{g}=\mathfrak{sp}(n,\mathbb{C})$


\subsection{Type D$_n$}
\label{subsec:d}

In this subsection, we give a proof of Theorem \ref{thm:slice-n} for 
$\mathfrak{g}=\mathfrak{so}(2n,\mathbb{C})$ for integer $n\geq 4$. 
In this case, 
$\mathfrak{g}_{\mathbb{R}}$ is isomorphic to $\mathfrak{so}(n,n)$. 
Then, $\mathfrak{a}^*_{\mathbb{R}}=V_{\mathbb{R}}$. 
A root system $\Delta \equiv \Delta (\mathfrak{g},\mathfrak{a})$ is 
$\Delta =\{ \pm e_i\pm e_j:1\leq i<j\leq n\} $. 
We fix a positive system $\Delta ^+$ as 
$\Delta ^+=\{ e_i\pm e_j:1\leq i<j\leq n\} $. 
The simple roots $\alpha _1,\ldots ,\alpha _n$ is given by 
$\alpha _i=e_i-e_{i+1}$ $(1\leq i\leq n-1)$ and $\alpha _n=e_{n-1}+e_n$. 
The highest root $\beta $ is written as $\beta =e_1+e_2
=\alpha _1+2\alpha _2+2\alpha _3+\cdots +2\alpha _{n-2}+\alpha _{n-1}+\alpha _n$. 

Let $\mathcal{O}_X$ be a nilpotent orbit in $\mathfrak{so}(2n,\mathbb{C})$ 
with characteristic element $H=h_1E_1+\cdots +h_nE_n\in \mathfrak{a}_+$ 
with $h_1\geq \cdots \geq h_{n-1}\geq |h_n|$. 
Then, the weighted Dynkin diagram $\Omega (\mathcal{O}_X)=(m_1,\ldots ,m_n)$ 
is given by $(h_1-h_2,\ldots ,h_{n-1}-h_n,h_{n-1}+h_n)$. 

\begin{figure}[htbp]
\begin{align*}
\SelectTips{cm}{12}
	\objectmargin={1pt}
	\xygraph{
		\circ ([]!{+(0,-.3)} {\alpha_1},[]!{+(0,+.3)} {m_1}) - [r]
		\circ ([]!{+(0,-.3)} {\alpha_2},[]!{+(0,+.3)} {m_2}) - [r] 
		\cdots - [r]
		\circ ([]!{+(0,-.3)} {\alpha_{n - 2}},[]!{+(0,+.3)} {m_{n-2}}) 
			(
			- []!{+(1,0.7)} \circ ([]!{+(0.1,-.3)} {\alpha_{n-1}},[]!{+(0,+.3)} {m_{n-1}}),
			- []!{+(1,-0.7)} \circ ([]!{+(0.1,-.3)} {\alpha_{n}},[]!{+(0,+.3)} {m_n})
			)
	}
\end{align*}
\caption{Weighted Dynkin diagram of $\mathcal{O}_X$ in $\mathfrak{so}(2n,\mathbb{C})$}
\end{figure}

A nilpotent orbit $\mathcal{O}_X$ in $\mathfrak{so}(2n,\mathbb{C})$ is spherical 
if and only if $\Omega (\mathcal{O}_X)$ satisfies one of the following cases: 
\begin{enumerate}
	\renewcommand{\labelenumi}{(D\theenumi) }
	\item $\Omega (\mathcal{O}_X)=(2,0,\ldots ,0)$, 
	namely, $m_1=1$ and $m_i=0~(i\neq 1)$. 
	\label{case:d1}
	
	\item $\Omega (\mathcal{O}_X)=(\underbrace{0,\ldots ,0}_{2p-1},1,0,\ldots ,0)$ 
	for $1\leq p\leq \frac{n}{2}-1$, namely, 
	$m_{2p}=1$ and $m_i=0~(i\neq 2p)$. 
	\label{case:d2}
	
	\item[(D\ref{case:d2}$'$)]%
	$n=2p+1$ and $\Omega (\mathcal{O}_X)=(0,\ldots ,0,1,1)$, 
	namely, $m_{2p}=m_{2p+1}=1$ and $m_i=0~(i\neq 2p,2p+1)$. 
	
	\item[(D\ref{case:d2}$''$)]%
	$n=2p$ and $\Omega (\mathcal{O}_X)=(0,\ldots ,0,2)$, 
	namely, $m_{2p}=2$ and $m_i=0~(i\neq 2p)$. 
	
	\item[(D\ref{case:d2}$'''$)]%
	$n=2p$ and $\Omega (\mathcal{O}_X)=(0,\ldots ,0,2,0)$, 
	namely, $m_{2p-1}=2$ and $m_i=0~(i\neq 2p-1)$. 
	
	\item $\Omega (\mathcal{O}_X)=(1,\underbrace{0,\ldots ,0}_{2p-1},1,0,\ldots ,0)$ 
	for $1\leq p<\frac{n}{2}-1$, 
	namely, $m_1=m_{2p+1}=1$ and $m_i=0~(i\neq 1,2p+1)$. 
	\label{case:d3}
	
	\item[(D\ref{case:d3}$'$)] $n=2p+2$ and 
	$\Omega (\mathcal{O}_X)=(1,0,0,\ldots ,0,1,1)$, 
	namely, $m_1=m_{2p+1}=m_{2p+2}=1$ and $m_i=0~(i\neq 1,2p+1,2p+2)$. 
\end{enumerate}
By Lemma \ref{lem:height-max}, 
the height of $\mathcal{O}_X$ equals two if for Cases (D\ref{case:d1})--(D\ref{case:d2}$''$), 
and three for Cases (D\ref{case:d3}), (D\ref{case:d3}$'$). 

\subsubsection{Case (D\ref{case:d1})}

Let us consider the case $\Omega (\mathcal{O}_X)=(2,0,\ldots ,0)$. 
Then, the characteristic element $H$ is of the form 
\begin{align*}
H=2E_1. 
\end{align*}
This $\mathcal{O}_X$ consists of all complex matrices with Jordan type $(3,1^{2n-3})$. 

The Levi subalgebra $\mathfrak{l}$ is given by 
\begin{align*}
\mathfrak{l}=\mathfrak{a}\oplus \bigoplus _{2\leq i<j\leq n}\mathfrak{g}_{\pm e_{i}\pm e_{j}}. 
\end{align*}
This means that 
\begin{align*}
\mathfrak{l}\simeq \mathfrak{so}(2n-2,\mathbb{C})\oplus \mathbb{C}. 
\end{align*}

The $\operatorname{ad}(H)$-eigenspace $\mathfrak{g}(2)$ is written as 
\begin{align*}
\mathfrak{g}(2)=\bigoplus _{2\leq j\leq n}\mathfrak{g}_{e_1\pm e_{j}}. 
\end{align*}
Then, $\mathfrak{g}(2)$ is isomorphic to $\mathbb{C}^{2n-2}$, from which 
\begin{align*}
\mathfrak{n}\simeq \mathbb{C}^{2n-2}. 
\end{align*}

Similarly to Case (B\ref{case:b1}), 
it turns out that 
the $L_{\mathbb{C}}$-action on $\mathfrak{n}$ is geometrically equivalent to 
the $(SO(2n-2,\mathbb{C})\times \mathbb{C}^{\times })$-action on $\mathbb{C}^{2n-2}$. 
It follows from (6) of Table \ref{table:linear} that 
this action is a multiplicity-free action. 

We take the subset $S_0$ in $\mathfrak{n}$ as 
\begin{align*}
S_0=\mathbb{R}E_{e_1+e_2}\oplus \mathbb{R}E_{e_1-e_2}. 
\end{align*}
Then, $S_0$ is isomorphic to the slice $D_{1,1}$ of Table \ref{table:choice} 
for the $(SO(2n-2)\times \mathbb{T})$-action on $\mathbb{C}^{2n-2}$. 
By Lemma \ref{lem:linear}, $S_0$ satisfies $\mathfrak{n}=L_u\cdot S_0$. 
Therefore, Theorem \ref{thm:slice-n} has been verified for Case (D\ref{case:d1}). 

\subsubsection{Case (D\ref{case:d2})}
\label{subsubsec:d}

Let $\Omega (\mathcal{O}_X)$ satisfy $m_{2p}=1$ and $m_i=0$ $(i\neq 2p)$ for 
$1\leq p\leq \frac{n}{2}-1$. 
Then, 
\begin{align}
H=E_1+E_2+\cdots +E_{2p}. 
\label{eq:H-d2}
\end{align}
This $\mathcal{O}_X$ consists of all complex matrices with Jordan type $(2^{2p},1^{2n-4p})$. 
In particular, $\mathcal{O}_X$ with Jordan type $(2^2,1^{2n-4})$ $(p=1)$ 
is the minimal nilpotent orbit in $\mathfrak{so}(2n,\mathbb{C})$. 

The Levi subalgebra $\mathfrak{l}$ is given by 
\begin{align*}
\mathfrak{l}=\mathfrak{a}\oplus \bigoplus _{1\leq i<j\leq 2p}\mathfrak{g}_{\pm (e_i-e_j)}
	\oplus \bigoplus _{2p+1\leq i<j\leq n}\mathfrak{g}_{\pm e_{i}\pm e_{j}}. 
\end{align*}
This means that 
\begin{align*}
\mathfrak{l}\simeq \mathfrak{sl}(2p,\mathbb{C})\oplus \mathfrak{so}(2n-4p-2,\mathbb{C})
	\oplus \mathbb{C}. 
\end{align*}

The $\operatorname{ad}(H)$-eigenspace $\mathfrak{g}(2)$ is written as 
\begin{align*}
\mathfrak{g}(2)=\bigoplus _{1\leq i<j\leq 2p}\mathfrak{g}_{e_i+e_j}. 
\end{align*}
Then, $\mathfrak{g}(2)$ is isomorphic to $\operatorname{Alt}(2p,\mathbb{C})$, from which 
\begin{align*}
\mathfrak{n}&\simeq \operatorname{Alt}(2p,\mathbb{C}). 
\end{align*}

The semisimple part $SL(2p,\mathbb{C})\times SO(2n-4p-2,\mathbb{C})$ 
of the Levi subgroup $L_{\mathbb{C}}$ acts on $\operatorname{Alt}(2p,\mathbb{C})$ 
as follows: $SL(2p,\mathbb{C})$ by 
\begin{align*}
g\cdot A=gA\,{}^tg, 
\end{align*}
and $SO(2n-4p-2,\mathbb{C})$ trivially. 
Its center $\mathbb{C}^{\times}$ acts as scalar multiplications. 
Then, the $L_{\mathbb{C}}$-action on $\mathfrak{n}$ is geometrically equivalent to 
the $(SL(2p,\mathbb{C})\times \mathbb{C}^{\times })$-action 
on $\operatorname{Alt}(2p,\mathbb{C})$. 
It follows from (4) of Table \ref{table:linear} that 
this action is a multiplicity-free action. 

We take the subset $S_0$ in $\mathfrak{n}$ as 
\begin{align}
S_0:=\bigoplus _{j=1}^p \mathbb{R}E_{e_{2j-1}+e_{2j}}. 
\label{eq:slice-d2}
\end{align}
Then, $S_0$ is isomorphic to our slice $A_p$ of Table \ref{table:choice} 
for the $(SU(2p)\times \mathbb{T})$-action on $\operatorname{Alt}(2p,\mathbb{C})$. 
By Lemma \ref{lem:linear}, we have $\mathfrak{n}=L_u\cdot S_0$. 
Therefore, Theorem \ref{thm:slice-n} has been verified for Case (D\ref{case:d2}). 

\subsubsection{Case (D\ref{case:d2}\,$'$)}

Let us consider the case where $\mathfrak{g}=\mathfrak{so}(4p+2,\mathbb{C})$ 
$(n=2p+1)$ and 
$\Omega (\mathcal{O}_X)=(0,\ldots ,0,1,1)$. 
This nilpotent orbit is the set of all complex matrices 
with Jordan type $(2^{2p},1^2)$. 
Then, the proof for Case (D\ref{case:d2}\,$'$) can be given similarly to Case (D\ref{case:d2}). 
In fact, the characteristic element $H$ forms 
\begin{align*}
H=E_1+\cdots +E_{2p} 
\end{align*}
which is the same as in (\ref{eq:H-d2}). 

The Levi subalgebra $\mathfrak{l}$ is given by 
\begin{align*}
\mathfrak{l}=\mathfrak{a}\oplus \bigoplus _{1\leq i<j\leq 2p}\mathfrak{g}_{\pm (e_i-e_j)}
	\simeq \mathfrak{sl}(2p,\mathbb{C})\oplus \mathbb{C}^2. 
\end{align*}
The $\operatorname{ad}(H)$-eigenspace $\mathfrak{g}(2)$ is written as 
\begin{align*}
\mathfrak{g}(2)=\bigoplus _{1\leq i<j\leq 2p}\mathfrak{g}_{e_i+e_j}
	\simeq \operatorname{Alt}(2p,\mathbb{C}). 
\end{align*}
Then, the $L_{\mathbb{C}}$-action on $\mathfrak{n}$ is geometrically equivalent to 
the $(SL(2p,\mathbb{C})\times \mathbb{C}^{\times })$-action 
on $\operatorname{Alt}(2p,\mathbb{C})$. 
Similarly to the previous case, 
$\mathfrak{n}=L_u\cdot S_0$ holds for 
the subset $S_0$ defined by (\ref{eq:slice-d2}). 
Hence, Theorem \ref{thm:slice-n} for Case (D\ref{case:d2}\,$'$) has been verified. 

\subsubsection{Case (D\ref{case:d2}\,$''$)}

Let us treat the case where $\mathfrak{g}=\mathfrak{so}(4p,\mathbb{C})$ and 
$\Omega (\mathcal{O}_X)=(0,\ldots ,0,2)$. 
This nilpotent orbit $\mathcal{O}_X$ is very even, namely, 
$\mathcal{O}_X$ is the set of all complex matrices 
with Jordan type $(2^{2p})$. 
The characteristic element $H$ forms the same as in (\ref{eq:H-d2}). 
Then, 
\begin{align*}
\mathfrak{l}&=\mathfrak{a}\oplus \bigoplus _{1\leq i<j\leq 2p}\mathfrak{g}_{\pm (e_i-e_j)}
	\simeq \mathfrak{sl}(2p,\mathbb{C})\oplus \mathbb{C}, \\
\mathfrak{n}&=\mathfrak{g}(2)=\bigoplus _{1\leq i<j\leq 2p}\mathfrak{g}_{e_i+e_j}
	\simeq \operatorname{Alt}(2p,\mathbb{C}). 
\end{align*}

Similarly to Cases (D\ref{case:d2}) and (D\ref{case:d2}$'$), 
the equality $\mathfrak{n}=L_u\cdot S_0$ holds for $S_0$ defined by (\ref{eq:slice-d2}). 
Hence, Theorem \ref{thm:slice-n} for Case (D\ref{case:d2}\,$''$) has been checked. 

\subsubsection{Case (D\ref{case:d2}\,$'''$)}

There are two weighted Dynkin diagrams corresponding to nilpotent orbits $\mathcal{O}_X$ 
in $\mathfrak{so}(4p,\mathbb{C})$ with Jordan type $(2^{2p})$. 
One is Case (D\ref{case:d2}\,$''$), 
the other is $\Omega (\mathcal{O}_X)=(0,\ldots ,0,2,0)$, namely, Case (D\ref{case:d2}\,$'''$). 
For the latter case, $H$ is of the form 
\begin{align*}
H=E_1+\cdots +E_{2p-1}-E_{2p} 
\end{align*}
which is slightly different from the form in (\ref{eq:H-d2}). 

The Levi subalgebra $\mathfrak{l}$ is given by 
\begin{align*}
\mathfrak{l}=\mathfrak{a}\oplus \bigoplus _{1\leq i<j\leq 2p-1}\mathfrak{g}_{\pm (e_i-e_j)}
	\oplus \bigoplus _{1\leq i\leq 2p-1}\mathfrak{g}_{\pm (e_i+e_{2p})}. 
\end{align*}
This means that 
\begin{align*}
\mathfrak{l}\simeq \mathfrak{sl}(2p,\mathbb{C})\oplus \mathbb{C}. 
\end{align*}

The $\operatorname{ad}(H)$-eigenspace $\mathfrak{g}(2)$ is written as 
\begin{align*}
\mathfrak{g}(2)=\bigoplus _{1\leq i<j\leq 2p-1}\mathfrak{g}_{e_i+e_j}
	\oplus \bigoplus_{1\leq i\leq 2p-1}\mathfrak{g}_{e_i-e_{2p}}. 
\end{align*}
Then, 
\begin{align*}
\mathfrak{n}\simeq \operatorname{Alt}(2p,\mathbb{C}). 
\end{align*}
Hence, the $L_{\mathbb{C}}$-action on $\mathfrak{n}$ is geometrically equivalent to 
the $(SL(2p,\mathbb{C})\times \mathbb{C}^{\times})$-action on $\operatorname{Alt}(2p,\mathbb{C})$, 
which is a multiplicity-free action. 

We take $S_0$ as 
\begin{align*}
S_0=\bigoplus _{1\leq i\leq p-1}\mathbb{R}E_{e_{2j-1}+e_{2j}}
	\oplus \mathbb{R}E_{e_{2p-1}-e_{2p}}. 
\end{align*}
Then, $S_0$ is isomorphic to our slice $A_{p}$ of Table \ref{table:choice} 
for the $(SU(2p)\times \mathbb{T})$-action 
via $\mathfrak{n}\simeq \operatorname{Alt}(2p,\mathbb{C})$. 
By Lemma \ref{lem:linear}, 
$\mathfrak{n}=L_u\cdot S_0$. 
Hence, Theorem \ref{thm:slice-n} has been verified. 

\subsubsection{Case (D\ref{case:d3})}

Let $\Omega (\mathcal{O}_X)$ satisfy 
$m_1=m_{2p+1}=1$ and $m_i=0$ $(i\neq 1,2p+1)$ for $1\leq p\leq \frac{n}{2}-1$. 
Then, $H$ is of the form 
\begin{align}
H=2E_1+E_2+\cdots +E_{2p+1}. 
\label{eq:H-d3}
\end{align}
This $\mathcal{O}_X$ is the set of all complex matrices 
with Jordan type $(3,2^{2p},1^{2n-4p-3})$. 

The Levi subalgebra $\mathfrak{l}$ is given by 
\begin{align*}
\mathfrak{l}&=\mathfrak{a}\oplus \bigoplus _{2\leq i<j\leq 2p+1}
	\mathfrak{g}_{\pm (e_{i}-e_{j})}\oplus 
	\bigoplus _{2p+2\leq i<j\leq n}\mathfrak{g}_{\pm e_{i}\pm e_{j}}, 
\end{align*}
from which $\mathfrak{l}$ is isomorphic to 
\begin{align*}
\mathfrak{l}\simeq \mathfrak{sl}(2p,\mathbb{C})\oplus \mathfrak{so}(2n-4p-2,\mathbb{C})
	\oplus \mathbb{C}^2. 
\end{align*}

Next, the $\operatorname{ad}(H)$-eigenspace $\mathfrak{g}(2)$ is written as 
\begin{align*}
\mathfrak{g}(2)&=\bigoplus _{2\leq i<j\leq 2p+1}\mathfrak{g}_{e_{i}+e_{j}}
	\oplus \bigoplus _{2p+2\leq j\leq n}\mathfrak{g}_{e_1\pm e_{j}}. 
\end{align*}
Then, $\mathfrak{g}(2)$ is isomorphic to 
the direct sum $\operatorname{Alt}(2p,\mathbb{C})\oplus 
\mathbb{C}^{2n-4p-2}$. 
The eigenspace $\mathfrak{g}(3)$ forms 
\begin{align*}
\mathfrak{g}(3)&=\bigoplus _{2\leq j\leq 2p+1}\mathfrak{g}_{e_1+e_{j}}
	\simeq \mathbb{C}^{2p}. 
\end{align*}
This implies that 
\begin{align*}
\mathfrak{n}
\simeq (\operatorname{Alt}(2p,\mathbb{C})\oplus \mathbb{C}^{2n-4p-2})\oplus \mathbb{C}^{2p}. 
\end{align*}

The semisimple part $SL(2p,\mathbb{C})\times SO(2n-4p-2,\mathbb{C})$ 
of the Levi subgroup $L_{\mathbb{C}}$ acts on $\operatorname{Alt}(2p,\mathbb{C})\oplus 
\mathbb{C}^{2n-4p-2}\oplus \mathbb{C}^{2p}$ by 
\begin{align*}
(g,h)\cdot (A,v,w)=(gA\,{}^tg,hv,gw), 
\end{align*}
and its center $(\mathbb{C}^{\times })^2$ acts by 
\begin{align*}
(s,t)\cdot (A,v,w)=(t^2A,st^2v,st^3w). 
\end{align*}
Then, the $L_{\mathbb{C}}$-action on $\mathfrak{n}$ 
is geometrically equivalent to 
the decomposable action consisting 
of the indecomposable $(SL(2p,\mathbb{C})\times \mathbb{C}^{\times})$-action 
on $\operatorname{Alt}(2p,\mathbb{C})\oplus \mathbb{C}^{2p}$ 
((8) of Table \ref{table:linear}) 
and the irreducible $(SO(2n-4p-2,\mathbb{C})\times \mathbb{C}^{\times})$-action 
on $\mathbb{C}^{2n-4p-2}$ ((6) of Table \ref{table:linear}). 
Hence, this action is a multiplicity-free action. 

Here, we define 
\begin{align*}
S_0'&:=\bigoplus _{1\leq i\leq p}\mathbb{R}E_{e_{2i}+e_{2i+1}}
	\oplus \bigoplus _{1\leq j\leq p}\mathbb{R}E_{e_1+e_{2j}}, \\
S_0''&:=\mathbb{R}E_{e_1+e_{2p+2}}\oplus \mathbb{R}E_{e_1-e_{2p+2}}
\end{align*}
Then, $S_0'$ is isomorphic to the slice $A_p\oplus T_p$ of Table \ref{table:choice} 
for the $(SU(2p)\times \mathbb{T})$-action 
on $\operatorname{Alt}(2p,\mathbb{C})\oplus \mathbb{C}^{2p}$, 
and $S_0''$ to the slice $D_{1,1}$ 
for the $(SO(2n-4p-2)\times \mathbb{T})$-action on $\mathbb{C}^{2n-4p-2}$. 
We set 
\begin{align}
S_0:=S_0'\oplus S_0''\simeq 
(A_p\oplus D_{1,1})\oplus T_p. 
\label{eq:slice-d3}
\end{align}
Then, it follows from Lemma \ref{lem:linear} that $\mathfrak{n}=L_u\cdot S_0$. 
Therefore, Theorem \ref{thm:slice-n} has been verified for Case (D\ref{case:b3}). 

\subsubsection{Case (D\ref{case:d3}\,$'$)}

Let us consider the case where $\mathfrak{g}=\mathfrak{so}(4p+4,\mathbb{C})$ and 
$\Omega (\mathcal{O}_X)=(1,0,\ldots ,0,1,1)$. 
Then, the characteristic element $H$ forms the same as in (\ref{eq:H-d3}). 
This $\mathcal{O}_X$ is the set of all nilpotent elements with Jordan type $(3,2^{2p},1)$. 

The Levi subalgebra $\mathfrak{l}$ is given by 
\begin{align*}
\mathfrak{l}=\mathfrak{a}\oplus \bigoplus _{2\leq i<j\leq 2p+1}\mathfrak{g}_{\pm (e_{i}-e_{j})}
	\simeq \mathfrak{sl}(2p,\mathbb{C})\oplus \mathbb{C}^3. 
\end{align*}

The $\operatorname{ad}(H)$-eigenspace $\mathfrak{g}(2)$ is written as 
\begin{align*}
\mathfrak{g}(2)=\bigoplus _{2\leq i<j\leq 2p+1}\mathfrak{g}_{e_{i}+e_{j}}
	\oplus \mathfrak{g}_{e_1\pm e_{2p+2}}
	\simeq \operatorname{Alt}(2p,\mathbb{C})\oplus \mathbb{C}^2. 
\end{align*}
Further, 
$\mathfrak{g}(3)$ is of the form 
\begin{align*}
\mathfrak{g}(3)=\bigoplus _{2\leq j\leq 2p+1}\mathfrak{g}_{e_1+e_{j}}
\simeq \mathbb{C}^{2p}. 
\end{align*}
This implies that the nilpotent subalgebra $\mathfrak{n}$ is isomorphic to 
\begin{align*}
\mathfrak{n}\simeq (\operatorname{Alt}(2p,\mathbb{C})\oplus \mathbb{C}^2)
	\oplus \mathbb{C}^{2p}. 
\end{align*}

The subset $S_0$ in $\mathfrak{n}$ defined by (\ref{eq:slice-d3}) 
satisfies $\mathfrak{n}=L_u\cdot S_0$. 
Therefore, Theorem \ref{thm:slice-n} has been verified. 


\subsection{Type E$_6$}
\label{subsec:e6}

In Sections \ref{subsec:e6}--\ref{subsec:g2}, 
we deal with $\mathfrak{g}$ of exceptional type. 
In this subsection, 
we give a proof of Theorem \ref{thm:slice-n} for 
$\mathfrak{g}=\mathfrak{e}_6(\mathbb{C})$. 
In this case, $\mathfrak{g}_{\mathbb{R}}\simeq \mathfrak{e}_{6(6)}$. 
Then, $\mathfrak{a}^*_{\mathbb{R}}=\{ v\in \mathbb{R}e_1\oplus \cdots +\mathbb{R}e_8
:\langle v,e_8+e_7\rangle =\langle v,e_7-e_6\rangle =0\} 
=\mathbb{R}(e_8-e_7-e_6)\oplus \mathbb{R}e_5\oplus \cdots \oplus \mathbb{R}e_1
$. 
A root system $\Delta \equiv \Delta (\mathfrak{g},\mathfrak{a})$ is given by 
$\Delta =\{ \pm (e_i-e_j):1\leq j<i\leq 5\} 
\sqcup \{ \pm \frac{1}{2}(e_8-e_7-e_6+\sum _{j=1}^5 (-1)^{n(j)}e_j): 
\sum _{j=1}^5 n(j)=0,2,4\} $, 
where $n(1),\ldots n(5)\in \{ 0,1\} $. 
We fix a positive system $\Delta ^+$ of $\mathfrak{g}$ as follows: 
$\Delta ^+=\{ e_i-e_j:1\leq j<i\leq 5\} 
\sqcup \{ \frac{1}{2}(e_8-e_7-e_6+\sum _{j=1}^5 (-1)^{n(j)}e_j): 
	\sum _{j=1}^5 n(j)=0,2,4\} $. 
The simple roots $\alpha _1,\ldots ,\alpha _6$ are given by 
$\alpha _1=\frac{1}{2}(e_8-e_7-e_6-e_5-e_4-e_3-e_2+e_1)$, 
$\alpha _2=e_2+e_1$, and $\alpha _i=e_{i-1}-e_{i-2}$ $(3\leq i\leq 6)$. 
The highest root $\beta $ is written as 
$\beta =\frac{1}{2}(e_8-e_7-e_6+e_5+e_4+e_3+e_2+e_1)
=\alpha _1+2\alpha _2+2\alpha _3+3\alpha _4+2\alpha _5+\alpha _6$. 

Let $\mathcal{O}_X$ be a nilpotent orbit in $\mathfrak{e}_6(\mathbb{C})$ 
with characteristic element $H=h_6(E_8-E_7-E_6)+h_5E_5+\cdots +h_1E_1\in \mathfrak{a}_+$. 
Then, 
$\alpha _1(H)=\frac{1}{2}(3h_6-h_5-h_4-h_3-h_2+h_1),~
\alpha _2(H)=h_2+h_1$, and 
$\alpha _i(H)=h_{i-1}-h_{i-2}\ (i=3,4,5,6)$. 
Hence, the weighted Dynkin diagram $\Omega (\mathcal{O}_X)
=(m_1,m_2,m_3,m_4,m_5,m_6)$ is given by 
$(\frac{1}{2}(3h_6-h_5-h_4-h_3-h_2+h_1),h_2+h_1,
h_2-h_1,h_3-h_2,h_4-h_3,h_5-h_4)$. 

\begin{figure}[htbp]
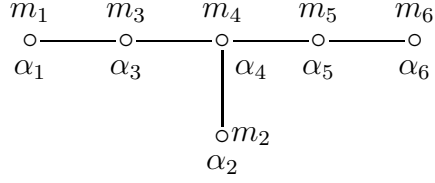

\begin{align*}
\SelectTips{cm}{12}
	\objectmargin={1pt}
	\xygraph{
		\circ ([]!{+(0,-.3)} {\alpha_1},[]!{+(0,+.3)} {m_1}) - [r]
		\circ ([]!{+(0,-.3)} {\alpha_3},[]!{+(0,+.3)} {m_3}) - [r]
		\circ ([]!{+(.3,-.3)} {\alpha_4},[]!{+(0,+.3)} {m_4}) (
			- [d] \circ ([]!{+(0,-0.3)} {\alpha_2},[]!{+(0.3,0)} {m_2}),
			- [r] \circ ([]!{+(0,-.3)} {\alpha_5},[]!{+(0,+.3)} {m_5})
			- [r] \circ ([]!{+(0,-.3)} {\alpha_6},[]!{+(0,+.3)} {m_6}))
	}
\end{align*}
\caption{Weighted Dynkin diagram of $\mathcal{O}_X$ in $\mathfrak{e}_6(\mathbb{C})$}
\end{figure}

A nilpotent orbit $\mathcal{O}_X$ is spherical if and only if $\Omega (\mathcal{O}_X)$ 
satisfies one of the following cases: 
\begin{enumerate}
	\renewcommand{\labelenumi}{(E$_6$\theenumi) }
	\item $\Omega (\mathcal{O}_X)=(0,1,0,0,0,0)$. 
	\label{case:e61}
	\item $\Omega (\mathcal{O}_X)=(1,0,0,0,0,1)$. 
	\label{case:e62}
	\item $\Omega (\mathcal{O}_X)=(0,0,0,1,0,0)$. 
	\label{case:e63}
\end{enumerate}

By Lemma \ref{lem:height-max}, 
the height of $\mathcal{O}_X$ equals two for Cases (E$_6$\ref{case:e61}), (E$_6$\ref{case:e62}), 
and three for Case (E$_6$\ref{case:e63}). 

\subsubsection{Case (E$_6$\ref{case:e61})}
\label{subsubsec:e6}

We consider the case $\Omega (\mathcal{O}_X)=(0,1,0,0,0,0)$. 
Then, $H\in \mathfrak{a}$ is written by 
\begin{align*}
H=\frac{1}{2}(E_8-E_7-E_6+E_5+E_4+E_3+E_2+E_1). 
\end{align*}
This $\mathcal{O}_X$ is the minimal nilpotent orbit with dimension 22. 

The Levi subalgebra $\mathfrak{l}$ is given by 
\begin{align*}
\mathfrak{l}&=\mathfrak{a}\oplus 
	\bigoplus _{1\leq j<i\leq 5}\mathfrak{g}_{\pm (e_i-e_j)}
	\oplus \bigoplus _{\sum _{j=1}^5 n(j)=4}
		\mathfrak{g}_{\pm \frac{1}{2}(e_8-e_7-e_6+\sum _{j=1}^5(-1)^{n(j)}e_j)}. 
\end{align*}
This means that 
\begin{align*}
\mathfrak{l}\simeq \mathfrak{sl}(6,\mathbb{C})\oplus \mathbb{C}. 
\end{align*}

The $\operatorname{ad}(H)$-eigenspace $\mathfrak{g}(2)$ is written as 
\begin{align*}
\mathfrak{g}(2)=\mathfrak{g}_{\frac{1}{2}(e_8-e_7-e_6+e_5+e_4+e_3+e_2+e_1)}
\simeq \mathbb{C}. 
\end{align*}
Hence, the semisimple part $SL(6,\mathbb{C})$ of 
the Levi subgroup $L_{\mathbb{C}}$ acts trivially 
on $\mathbb{C}$, and the center $\mathbb{C}^{\times }$ by 
\begin{align*}
s\cdot z=s^2z. 
\end{align*}
Then, the $L_{\mathbb{C}}$-action on $\mathfrak{n}$ is geometrically equivalent 
to the $\mathbb{C}^{\times}$-action on $\mathbb{C}$. 
It follows from (1) of Table \ref{table:linear} 
that this action is a multiplicity-free action. 

We take the subspace $S_0$ as 
\begin{align*}
S_0:=\mathbb{R}E_{\frac{1}{2}(e_8-e_7-e_6+e_5+e_4+e_3+e_2+e_1)}. 
\end{align*}
Then, $S_0$ is isomorphic to the slice $\mathbb{R}$ of Table \ref{table:linear} 
for the $\mathbb{T}$-action on $\mathbb{C}$. 
By Lemma \ref{lem:linear}, $\mathfrak{n}=L_u\cdot S_0$. 
Hence, Theorem \ref{thm:slice-n} has been verified for Case (E$_6$\ref{case:e61}). 

\subsubsection{Case (E$_6$\ref{case:e62})}

Let $\Omega (\mathcal{O}_X)$ be $(1,0,0,0,0,1)$. 
Then, $H\in \mathfrak{a}$ is 
\begin{align*}
H=E_8-E_7-E_6+E_5. 
\end{align*}

The Levi subalgebra $\mathfrak{l}$ is given by 
\begin{align*}
\mathfrak{l}&=\mathfrak{a}\oplus \bigoplus _{1\leq j<i\leq 4}\mathfrak{g}_{\pm e_i\pm e_j}. 
\end{align*}
This means that 
\begin{align*}
\mathfrak{l}\simeq \mathfrak{so}(8,\mathbb{C})\oplus \mathbb{C}^2. 
\end{align*}

The $\operatorname{ad}(H)$-eigenspace $\mathfrak{g}(2)$ is written as 
\begin{align*}
\mathfrak{g}(2)=\bigoplus _{\sum _{j=1}^4 n(j)=0,2,4}
	\mathfrak{g}_{\frac{1}{2}(e_8-e_7-e_6+e_5)+\frac{1}{2}\sum _{j=1}^4 (-1)^{n(j)}e_j}. 
\end{align*}
Then, the semisimple part $Spin(8,\mathbb{C})$ of $L_{\mathbb{C}}$ acts on 
\begin{align*}
\mathfrak{n}\simeq \mathbb{C}^8 
\end{align*}
as the half-spin representation. 
We know that this action is geometrically equivalent to the $SO(8,\mathbb{C})$-action 
on $\mathbb{C}^8$. 
On the other hand, the center $(\mathbb{C}^{\times})^2$ of $L_{\mathbb{C}}$ acts on $\mathfrak{n}$ by 
\begin{align*}
(s,t)\cdot v=a^2t^3v. 
\end{align*}
Hence, the $L_{\mathbb{C}}$-action on $\mathfrak{n}$ 
is geometrically equivalent to the $(SO(8,\mathbb{C})\times \mathbb{C}^{\times})$-action 
on $\mathbb{C}^8$. 
It follows from (6) of Table \ref{table:linear} that 
this action is a multiplicity-free action. 

We take the subspace $S_0$ in $\mathfrak{n}$ as 
\begin{align*}
S_0&=\mathbb{R}E_{\frac{1}{2}(e_8-e_7-e_6+e_5+e_4+e_3+e_2+e_1)}
	\oplus \mathbb{R}E_{\frac{1}{2}(e_8-e_7-e_6+e_5-e_4-e_3-e_2-e_1)}. 
\end{align*}
Then, $S_0$ is isomorphic to the slice $D_{1,1}$ of Table \ref{table:choice} 
for the $(SO(8)\times \mathbb{T})$-action on $\mathbb{C}^8$. 
By Lemma \ref{lem:linear}, 
$\mathfrak{n}=L_u\cdot S_0$. 
Hence, Theorem \ref{thm:slice-n} for Case (E$_6$\ref{case:e62}) has been verified . 

\subsubsection{Case (E$_6$\ref{case:e63})}

Let $\Omega (\mathcal{O}_X)=(0,0,0,1,0,0)$. 
Then, $H$ is written by 
\begin{align*}
H=E_8-E_7-E_6+E_5+E_4+E_3. 
\end{align*}

The Levi subalgebra $\mathfrak{l}$ is given by 
\begin{multline*}
\mathfrak{l}=\mathfrak{a}\oplus 
	\mathfrak{g}_{\pm \frac{1}{2}(e_8-e_7-e_6-e_5-e_4-e_3)\pm \frac{1}{2}(e_2-e_1)}
	\oplus 
	\mathfrak{g}_{\pm e_2\pm e_1}\oplus 
	\bigoplus _{3\leq j<i\leq 5}\mathfrak{g}_{\pm (e_{i}-e_{j})}. 
\end{multline*}
This means that 
\begin{align*}
\mathfrak{l}\simeq \mathfrak{sl}(3,\mathbb{C})\oplus \mathfrak{sl}(2,\mathbb{C})\oplus 
	\mathfrak{sl}(3,\mathbb{C})\oplus \mathbb{C}. 
\end{align*}

The $\operatorname{ad}(H)$-eigenspace $\mathfrak{g}(2)$ is written as 
\begin{align*}
\mathfrak{g}(2)&=\bigoplus _{3\leq j<i\leq 5}\mathfrak{g}_{e_{i}+e_{j}}
	\oplus \bigoplus _{\sum _{j=3}^5 n(j)=1}
	\mathfrak{g}_{\frac{1}{2}(e_8-e_7-e_6+\sum _{j=3}^5 (-1)^{n(j)}e_j)
		\pm \frac{1}{2}(e_2-e_1)}. 
\end{align*}
Then, $\mathfrak{g}(2)$ is isomorphic to the vector space $M(3,\mathbb{C})$. 
Further, the eigenspace $\mathfrak{g}(3)$ forms 
\begin{align*}
\mathfrak{g}(3)&=\mathfrak{g}_{\frac{1}{2}(e_8-e_7-e_6+e_5+e_4+e_3)
	\pm \frac{1}{2}(e_2+e_1)}, 
\end{align*}
which is isomorphic to $\mathbb{C}^2$. 
Hence, 
\begin{align*}
\mathfrak{n}=\mathfrak{g}(2)\oplus \mathfrak{g}(3)\simeq M(3,\mathbb{C})\oplus \mathbb{C}^2. 
\end{align*}

The semisimple part $SL(3,\mathbb{C})\times SL(2,\mathbb{C})\times SL(3,\mathbb{C})$ 
of the Levi subgroup $L_{\mathbb{C}}$ acts on $M(3,\mathbb{C})\oplus \mathbb{C}^2$ as follows: 
$SL(3,\mathbb{C})\times SL(3,\mathbb{C})$ acts by 
\begin{align*}
(g_1,g_2)\cdot (A,v)=(g_1Ag_2^{-1},v), 
\end{align*}
and $SL(2,\mathbb{C})$ acts by 
\begin{align*}
h\cdot (A,v)=(A,hv). 
\end{align*}
The center $\mathbb{C}^{\times }$ acts on $M(3,\mathbb{C})\oplus \mathbb{C}^2$ by 
\begin{align*}
s\cdot (A,v)=(s^2A,s^3v). 
\end{align*}
Then, the $L_{\mathbb{C}}$-action on $\mathfrak{n}$ is geometrically equivalent to 
the decomposable action consisting of two irreducible actions; 
the $(SL(3,\mathbb{C})\times SL(3,\mathbb{C})\times \mathbb{C}^{\times})$-action 
on $M(3,\mathbb{C})$ ((5) of Table \ref{table:linear}); 
the $SL(2,\mathbb{C})$-action on $\mathbb{C}^2$ ((2) of Table \ref{table:linear}). 
Thus, 
this action is a multiplicity-free action. 

We take the subspace $S_0'$ in $\mathfrak{g}(2)$ as 
\begin{multline*}
S_0':=\mathbb{R}E_{\frac{1}{2}(e_8-e_7-e_6+e_5+e_4-e_3+e_2-e_1)}\\
	\oplus \mathbb{R}E_{\frac{1}{2}(e_8-e_7-e_6+e_5-e_4+e_3-e_2+e_1)}
	\oplus \mathbb{R}E_{e_4+e_3}. 
\end{multline*}
Then, $S_0'$ is isomorphic to the slice $D_3$ of Table \ref{table:choice} 
for the $(SU(3)\times SU(3)\times \mathbb{T})$-action on $M(3,\mathbb{C})$. 
It follows from Lemma \ref{lem:linear} that 
$\mathfrak{g}(2)=(SU(3)\times SU(3)\times \mathbb{T})\cdot S_0'$. 
Similarly, the subspace $S_0''$ in $\mathfrak{g}(3)$ given by 
\begin{align*}
S_0'':=\mathbb{R}E_{\frac{1}{2}(e_8-e_7-e_6+e_5+e_4+e_3+e_2+e_1)} 
\end{align*}
is isomorphic to $T_1$ of Table \ref{table:linear}, 
and then $\mathfrak{g}(3)=SU(2)\cdot S_0''$. 
Thus, 
\begin{align*}
S_0:=S_0'\oplus S_0''\simeq D_3\oplus T_1. 
\end{align*}
satisfies $\mathfrak{n}=L_u\cdot S_0$. 
Therefore, Theorem \ref{thm:slice-n} for Case (E$_6$\ref{case:e63}) has been verified. 


\subsection{Type E$_7$}
\label{subsec:e7}

In this subsection, 
we give a proof of Theorem \ref{thm:slice-n} for $\mathfrak{g}=\mathfrak{e}_7(\mathbb{C})$. 
In this case, $\mathfrak{g}_{\mathbb{R}}\simeq \mathfrak{e}_{7(7)}$. 
Then, 
$\mathfrak{a}_{\mathbb{R}}^*=\{ v\in \mathbb{R}e_1\oplus \cdots \oplus \mathbb{R}e_8
:\langle v,e_8+e_7\rangle =0\} =\mathbb{R}(e_8-e_7)\oplus \mathbb{R}e_6\oplus \cdots \oplus 
\mathbb{R}e_1$. 
A root system $\Delta \equiv \Delta (\mathfrak{g},\mathfrak{a})$ is 
$\Delta =\{ \pm e_i\pm e_j:1\leq j<i\leq 6\} \sqcup \{ \pm (e_8-e_7)\} 
\sqcup \{ \pm \frac{1}{2}(e_8-e_7+\sum _{j=1}^6 (-1)^{n(j)}e_j):\sum _{j=1}^6 n(j)=1,3,5\} $ 
where $n(1),\ldots ,n(6)\in \{ 0,1\} $. 
We fix a positive system $\Delta ^+$ as 
$\Delta ^+=\{ e_i\pm e_j:1\leq j<i\leq 6\} \sqcup \{ e_8-e_7\} 
	\sqcup \{ \frac{1}{2}(e_8-e_7+\sum _{j=1}^6 (-1)^{n(j)}e_j):\sum _{j=1}^6 n(j)=1,3,5\} $. 
The simple roots 
$\alpha _1,\ldots ,\alpha _7 $ are given by 
$\alpha _1=\frac{1}{2}(e_8-e_7-e_6-e_5-e_4-e_3-e_2+e_1)$, 
$\alpha _2=e_2+e_1$, and $\alpha _{j+2}=e_{j+1}-e_j$ $(j=1,\ldots ,5)$. 
The highest root is written as $\beta =e_8-e_7=2\alpha _1+2\alpha _2+3\alpha _3
+4\alpha _4+3\alpha _5+2\alpha _6+\alpha _7$. 

Let $\mathcal{O}_X$ be a nilpotent orbit in $\mathfrak{e}_7(\mathbb{C})$ 
with characteristic element $H=h_7(E_8-E_7)+h_6E_6+\cdots +h_1E_1\in \mathfrak{a}_+$. 
Then, 
$\alpha _1(H)=\frac{1}{2}(2h_7-h_6-h_5-h_4-h_3-h_2+h_1),~
\alpha _2(H)=h_2+h_1$, and 
$\alpha _i(H)=h_{i-1}-h_{i-2}\ (i=3,4,5,6,7)$. 
Hence, the weighted Dynkin diagram $\Omega (\mathcal{O}_X)=(m_1,\ldots ,m_7)$ 
is given by 
$(\frac{1}{2}(2h_7-h_6-h_5-h_4-h_3-h_2+h_1),h_2+h_1,h_2-h_1,h_3-h_2,h_4-h_3,h_5-h_4,h_6-h_5)$. 

\begin{figure}[htbp]
\begin{align*}
\SelectTips{cm}{12}
	\objectmargin={1pt}
	\xygraph{
		\circ ([]!{+(0,-.3)} {\alpha_1},[]!{+(0,+.3)} {m_1}) - [r]
		\circ ([]!{+(0,-.3)} {\alpha_3},[]!{+(0,+.3)} {m_3}) - [r]
		\circ ([]!{+(.3,-.3)} {\alpha_4},[]!{+(0,+.3)} {m_4}) (
			- [d] \circ ([]!{+(0,-0.3)} {\alpha_2},[]!{+(0.3,0)} {m_2}),
			- [r] \circ ([]!{+(0,-.3)} {\alpha_5},[]!{+(0,+.3)} {m_5})
			- [r] \circ ([]!{+(0,-.3)} {\alpha_6},[]!{+(0,+.3)} {m_6})
			- [r] \circ ([]!{+(0,-.3)} {\alpha_7},[]!{+(0,+.3)} {m_7}))
	}
\end{align*}
\caption{Weighted Dynkin diagram of $\mathcal{O}_X$ in $\mathfrak{e}_7(\mathbb{C})$}
\end{figure}

A nilpotent orbit $\mathcal{O}_X$ is spherical if and only if and only if 
$\Omega (\mathcal{O}_X)$ satisfies one of the following cases: 
\begin{enumerate}
	\renewcommand{\labelenumi}{(E$_7$\theenumi) }
	\item $\Omega (\mathcal{O}_X)=(1,0,0,0,0,0,0)$. 
	\label{case:e71}
	\item $\Omega (\mathcal{O}_X)=(0,0,0,0,0,1,0)$. 
	\label{case:e72}
	\item $\Omega (\mathcal{O}_X)=(0,0,0,0,0,0,2)$. 
	\label{case:e73}
	\item $\Omega (\mathcal{O}_X)=(0,0,1,0,0,0,0)$. 
	\label{case:e74}
	\item $\Omega (\mathcal{O}_X)=(0,1,0,0,0,0,1)$. 
	\label{case:e75}
\end{enumerate}
By Lemma \ref{lem:height-max}, 
the height of $\mathcal{O}_X$ equals two for Cases 
(E$_7$\ref{case:e71})--(E$_7$\ref{case:e73}) and three 
for Cases (E$_7$\ref{case:e74}), (E$_7$\ref{case:e75}). 

\subsubsection{Case (E$_7$\ref{case:e71})}
\label{subsubsec:e7}

Let $\Omega (\mathcal{O}_X)=(1,0,0,0,0,0,0)$. 
Then, $H$ is of the form 
\begin{align*}
H=E_8-E_7. 
\end{align*}
This $\mathcal{O}_X$ is the minimal nilpotent orbit with dimension $34$. 

The Levi subalgebra $\mathfrak{l}$ is given by 
\begin{align*}
\mathfrak{l}=\mathfrak{a}\oplus \bigoplus _{1\leq j<i\leq 6}\mathfrak{g}_{\pm e_i\pm e_j}. 
\end{align*}
This means that 
\begin{align*}
\mathfrak{l}\simeq \mathfrak{so}(12,\mathbb{C})\oplus \mathbb{C}. 
\end{align*}

The $\operatorname{ad}(H)$-eigenspace $\mathfrak{g}(2)$ is written as 
\begin{align*}
\mathfrak{g}(2)=\mathfrak{g}_{e_8-e_7}\simeq \mathbb{C}. 
\end{align*}
Hence, the $L_{\mathbb{C}}$-action on $\mathfrak{n}$ is 
geometrically equivalent to the $\mathbb{C}^{\times}$-action on $\mathbb{C}$. 
It follows from (1) of Table \ref{table:linear} 
that this action is a multiplicity-free action. 

We take 
\begin{align*}
S_0=\mathbb{R}E_{e_8-e_7}\simeq \mathbb{R}. 
\end{align*}
By Lemma \ref{lem:linear}, $S_0$ satisfies $\mathfrak{n}=L_u\cdot S_0$. 
Hence, Theorem \ref{thm:slice-n} for Case (E$_7$\ref{case:e71}) has been verified. 

\subsubsection{Case (E$_7$\ref{case:e72})}

We consider the case $\Omega (\mathcal{O}_X)=(0,0,0,0,0,1,0)$. 
Then, 
\begin{align*}
H=E_8-E_7+E_6+E_5. 
\end{align*}

The Levi subalgebra $\mathfrak{l}$ is given by 
\begin{multline*}
\mathfrak{l}=\mathfrak{a}\oplus \bigoplus _{1\leq j<i\leq 4}\mathfrak{g}_{\pm e_i\pm e_j}
	\oplus \mathfrak{g}_{\pm (e_6-e_5)}\\
	\oplus \bigoplus _{\sum _{j=1}^4 n(j)=1,3}\mathfrak{g}_{\pm \frac{1}{2}(e_8-e_7-e_6-e_5
		+\sum _{j=1}^4 (-1)^{n(j)}e_j)}. 
\end{multline*}
This means that 
\begin{align*}
\mathfrak{l}\simeq 
\mathfrak{so}(10,\mathbb{C})\oplus \mathfrak{sl}(2,\mathbb{C})\oplus \mathbb{C}. 
\end{align*}

The $\operatorname{ad}(H)$-eigenspace $\mathfrak{g}(2)$ is written as 
\begin{align*}
\mathfrak{g}(2)=\mathfrak{g}_{e_8-e_7}\oplus \mathfrak{g}_{e_6+e_5}
	\oplus \bigoplus _{\sum _{j=1}^4n(j)=1,3}
	\mathfrak{g}_{\frac{1}{2}(e_8-e_7+e_6+e_5)+\frac{1}{2}\sum _{j=1}^4 (-1)^{n(j)}e_j}. 
\end{align*}
Then, $\mathfrak{g}(2)$ is isomorphic to $\mathbb{C}^{10}$, from which 
\begin{align*}
\mathfrak{n}\simeq \mathbb{C}^{10}. 
\end{align*}

The action of the semisimple part $SO(10,\mathbb{C})\times SL(2,\mathbb{C})$ of the Levi subgroup 
$L_{\mathbb{C}}$ on $\mathbb{C}^{10}$ is given as follows: 
$SO(10,\mathbb{C})$ acts by the standard representation; 
$SL(2,\mathbb{C})$ acts trivially, 
and the action of the center $\mathbb{C}^{\times}$ is the scalar multiplication. 
Then, the $L_{\mathbb{C}}$-action on $\mathfrak{n}$ is geometrically equivalent to 
the $(SO(10,\mathbb{C})\times \mathbb{C}^{\times})$-action on $\mathbb{C}^{10}$. 
It follows from (6) of Table \ref{table:linear} that 
this action is a multiplicity-free action. 

We take a subspace $S_0$ as 
\begin{align*}
S_0:=\mathbb{R}E_{e_8-e_7}\oplus \mathbb{R}E_{e_6+e_5}. 
\end{align*}
Then, $S_0$ is isomorphic to the slice $D_{1,1}$ of Table \ref{table:choice} 
for the $(SO(10)\times \mathbb{T})$-action via $\mathfrak{n}\simeq \mathbb{C}^{10}$. 
By Lemma \ref{lem:linear}, $\mathfrak{n}=L_u\cdot S_0$. 
Hence, Theorem \ref{thm:slice-n} for Case (E$_7$\ref{case:e72}) has been verified. 

\subsubsection{Case (E$_7$\ref{case:e73})}

Let $\Omega (\mathcal{O}_X)$ be $(0,0,0,0,0,0,2)$. 
Then, 
\begin{align*}
H=E_8-E_7+2E_6. 
\end{align*}

The Levi subalgebra $\mathfrak{l}$ is given by 
\begin{align*}
\mathfrak{l}=\mathfrak{a}\oplus 
	\bigoplus _{1\leq j<i\leq 5}\mathfrak{g}_{\pm e_i\pm e_j}
	\oplus \bigoplus _{\sum _{j=1}^5 n(j)=0,2,4}
		\mathfrak{g}_{\pm \frac{1}{2}(e_8-e_7-e_6+\sum _{j=1}^5(-1)^{n(j)}e_j)}. 
\end{align*}
This means that 
\begin{align*}
\mathfrak{l}\simeq \mathfrak{e}_6(\mathbb{C})\oplus \mathbb{C}. 
\end{align*}

The $\operatorname{ad}(H)$-eigenspace $\mathfrak{g}(2)$ is written as 
\begin{multline*}
\mathfrak{g}(2)=\bigoplus _{1\leq j\leq 5}\mathfrak{g}_{e_6\pm e_j}
	\oplus \mathfrak{g}_{e_8-e_7}
	\oplus \bigoplus _{\sum _{j=1}^5 n(j)=0,2,4}
		\mathfrak{g}_{\frac{1}{2}(e_8-e_7+e_6+\sum _{j=1}^5(-1)^{n(j)}e_j)}. 
\end{multline*}
Then, $\mathfrak{g}(2)$ is isomorphic to the complexified Jordan algebra $\mathfrak{J}_{\mathbb{C}}$, 
from which 
\begin{align*}
\mathfrak{n}\simeq \mathfrak{J}_{\mathbb{C}}. 
\end{align*}
This implies that the $L_{\mathbb{C}}$-action on $\mathfrak{n}$ is geometrically equivalent to 
the $(E_6(\mathbb{C})\times \mathbb{C}^{\times})$-action on $\mathfrak{J}_{\mathbb{C}}$. 
It follows from (7) of Table \ref{table:linear} that 
this action is a multiplicity-free action. 

We take the subspace $S_0$ in $\mathfrak{n}$ as 
\begin{align*}
S_0=\mathbb{R}E_{e_8-e_7}\oplus \mathbb{R}E_{e_6+e_5}
\oplus \mathbb{R}E_{e_6-e_5}. 
\end{align*}
Then, this is isomorphic to the slice $D_3$ of Table \ref{table:choice} 
for the $(E_6\times \mathbb{T})$-action on $\mathfrak{J}_{\mathbb{C}}$. 
By Lemma \ref{lem:linear}, $S_0$ satisfies $\mathfrak{n}=L_u\cdot S_0$. 
Hence, Theorem \ref{thm:slice-n} for Case (E$_7$\ref{case:e73}) has been verified in this case. 

\subsubsection{Case (E$_7$\ref{case:e74})}

We consider the case $\Omega (\mathcal{O}_X)=(0,0,1,0,0,0,0)$. 
Then, the characteristic element $H$ of $\mathcal{O}_X$ is expressed by 
\begin{align*}
H=\frac{1}{2}(3E_8-3E_7+E_6+E_5+E_4+E_3+E_2-E_1). 
\end{align*}

The Levi subalgebra $\mathfrak{l}$ is given by 
\begin{multline*}
\mathfrak{l}=\mathfrak{a}\oplus \bigoplus _{2\leq i\leq 6}\mathfrak{g}_{\pm (e_{i}+e_1)}
	\oplus \bigoplus _{2\leq j<i\leq 6}\mathfrak{g}_{\pm (e_{i}-e_{j})}
	\oplus \mathfrak{g}_{\pm \frac{1}{2}(e_8-e_7-e_6-e_5-e_4-e_3-e_2+e_1)}. 
\end{multline*}
This means that 
\begin{align*}
\mathfrak{l}\simeq \mathfrak{sl}(6,\mathbb{C})\oplus \mathfrak{sl}(2,\mathbb{C})
\oplus \mathbb{C}. 
\end{align*}

The $\operatorname{ad}(H)$-eigenspace $\mathfrak{g}(2)$ is written as 
\begin{multline*}
\mathfrak{g}(2)=\bigoplus _{\sum _{j=2}^6n(j)=1}
	\mathfrak{g}_{\frac{1}{2}(e_8-e_7+\sum _{j=2}^6 (-1)^{n(j)}e_j+e_1))}\\
	\oplus \bigoplus _{\sum _{j=2}^6n(j)=2}
	\mathfrak{g}_{\frac{1}{2}(e_8-e_7+\sum _{j=2}^6 (-1)^{n(j)}e_j-e_1))}. 
\end{multline*}
Then, $\mathfrak{g}(2)$ is isomorphic to $\operatorname{Alt}(6,\mathbb{C})$. 
Further, $\mathfrak{g}(3)$ forms 
\begin{align*}
\mathfrak{g}(3)=\mathfrak{g}_{e_8-e_7}\oplus 
\mathfrak{g}_{\frac{1}{2}(e_8-e_7+e_6+e_5+e_4+e_3+e_2-e_1)}\simeq \mathbb{C}^2. 
\end{align*}
Thus, 
\begin{align*}
\mathfrak{n}\simeq \operatorname{Alt}(6,\mathbb{C})\oplus \mathbb{C}^2. 
\end{align*}

The semisimple part $SL(6,\mathbb{C})\times SL(2,\mathbb{C})$ 
of the Levi subgroup 
$L_{\mathbb{C}}$ acts on $\operatorname{Alt}(6,\mathbb{C})\oplus \mathbb{C}^2$ by 
\begin{align*}
(g,h)\cdot (A,v)=(gA\,{}^tg,hv), 
\end{align*}
and its center $\mathbb{C}^{\times}$ acts by 
\begin{align*}
s\cdot (A,v)=(s^2A,s^3v). 
\end{align*}
Then, the $L_{\mathbb{C}}$-action on $\mathfrak{n}$ is geometrically equivalent to 
the decomposable action consisting of the $(SL(6,\mathbb{C})\times \mathbb{C}^{\times})$-action on 
$\operatorname{Alt}(6,\mathbb{C})$ ((4) of Table \ref{table:linear}) 
and the $SL(2,\mathbb{C})$-action on $\mathbb{C}^2$ ((2) of Table \ref{table:linear}). 
It follows from Lemma \ref{lem:linear} that 
this action is a multiplicity-free action. 

We take the subspace $S_0'$ in $\mathfrak{g}(2)$ as 
\begin{multline*}
S_0':=\mathbb{R}E_{\frac{1}{2}(e_8-e_7+e_6+e_5+e_4+e_3-e_2+e_1)}
	\oplus \mathbb{R}E_{\frac{1}{2}(e_8-e_7+e_6+e_5-e_4-e_3+e_2-e_1)}\\
	\oplus \mathbb{R}E_{\frac{1}{2}(e_8-e_7-e_6-e_5+e_4+e_3+e_2-e_1)}. 
\end{multline*}
Then, $S_0'$ is isomorphic to the slice $A_3$ of Table \ref{table:choice} 
for the $(SU(6)\times \mathbb{T})$-action on $\operatorname{Alt}(6,\mathbb{C})$. 
By Lemma \ref{lem:linear}, 
$\mathfrak{g}(2)=(SU(6)\times \mathbb{T})\cdot S_0'$. 
Similarly, the subspace 
\begin{align*}
S_0''&:=\mathbb{R}E_{e_8-e_7}. 
\end{align*}
in $\mathfrak{g}(3)$ is isomorphic to $T_1$ and 
satisfies $\mathfrak{g}(3)=SU(2)\cdot S_0''$. 
Hence, 
\begin{align*}
S_0&:=S_0'\oplus S_0''\simeq A_3\oplus T_1
\end{align*}
satisfies $\mathfrak{n}=L_u\cdot S_0$. 
Therefore, Theorem \ref{thm:slice-n} has been verified for Case (E$_7$\ref{case:e74}). 

\subsubsection{Case (E$_7$\ref{case:e75})}

Let $\Omega (\mathcal{O}_X)$ be $(0,1,0,0,0,0,1)$. 
Then, 
\begin{align*}
H=\frac{1}{2}(3E_8-3E_7+3E_6+E_5+E_4+E_3+E_2+E_1). 
\end{align*}

The Levi subalgebra $\mathfrak{l}$ is given by 
\begin{align*}
\mathfrak{l}=\mathfrak{a}\oplus \bigoplus _{1\leq j<i\leq 5}\mathfrak{g}_{\pm (e_i-e_j)}
	\oplus \bigoplus _{\sum _{j=1}^5 n(j)=4}
	\mathfrak{g}_{\pm \frac{1}{2}(e_8-e_7-e_6+\sum _{j=1}^5 (-1)^{n(j)}e_j)}. 
\end{align*}
This implies that 
\begin{align*}
\mathfrak{l}\simeq \mathfrak{sl}(6,\mathbb{C})\oplus \mathbb{C}^2. 
\end{align*}

The $\operatorname{ad}(H)$-eigenspace $\mathfrak{g}(2)$ is written as 
\begin{multline*}
\mathfrak{g}(2)=\bigoplus _{1\leq j\leq 5}\mathfrak{g}_{e_6+e_j}\oplus 
	\bigoplus _{\sum _{j=1}^5 n(j)=3}
	\mathfrak{g}_{\pm \frac{1}{2}(e_8-e_7+e_6+\sum _{j=1}^5 (-1)^{n(j)}e_j)}\\
	\oplus \mathfrak{g}_{\frac{1}{2}(e_8-e_7-e_6+e_5+e_4+e_3+e_2+e_1)}. 
\end{multline*}
Then, $\mathfrak{g}(2)$ is isomorphic to the direct sum $\operatorname{Alt}(6,\mathbb{C})\oplus \mathbb{C}$. 
Further, $\mathfrak{g}(3)$ is of the form 
\begin{align*}
\mathfrak{g}(3)=\mathfrak{g}_{e_8-e_7}\oplus \bigoplus _{\sum _{j=1}^5 n(j)=1}
	\mathfrak{g}_{\frac{1}{2}(e_8-e_7+e_6+\sum _{j=1}^5 (-1)^{n(j)}e_j)}
	\simeq \mathbb{C}^6. 
\end{align*}
Hence, $\mathfrak{n}$ is isomorphic to 
\begin{align*}
\mathfrak{n}\simeq \operatorname{Alt}(6,\mathbb{C})\oplus \mathbb{C}\oplus \mathbb{C}^6. 
\end{align*}

The semisimple part $SL(6,\mathbb{C})$ of the Levi subgroup $L_{\mathbb{C}}$ 
acts on $\operatorname{Alt}(6,\mathbb{C})\oplus \mathbb{C}\oplus \mathbb{C}^6$ by 
\begin{align*}
g\cdot (A,z,v)
=(gA\,{}^tg,z,gv), 
\end{align*}
and its center $(\mathbb{C}^{\times})^2$ acts by 
\begin{align*}
(s,t)\cdot (A,z,v)=(s^2tA,s^2t^3z,s^3t^2v). 
\end{align*}
This implies that 
the $L_{\mathbb{C}}$-action on $\mathfrak{n}$ is geometrically equivalent to 
the decomposable action which consists two actions: 
the indecomposable $(SL(6,\mathbb{C})\times \mathbb{C}^{\times})$-action on 
$\operatorname{Alt}(6,\mathbb{C})\oplus \mathbb{C}^6$ ((8) of Table \ref{table:linear}); 
the $\mathbb{C}^{\times}$-action on $\mathbb{C}$ ((1) of Table \ref{table:linear}). 
By Lemma \ref{lem:linear}, 
this action is a multiplicity-free action. 

We take the subspace $S_0'$ as 
\begin{multline*}
S_0':=\mathbb{R}E_{\frac{1}{2}(e_8-e_7+e_6+e_5+e_4-e_3-e_2-e_1)}\\
	\oplus 
	\mathbb{R}E_{\frac{1}{2}(e_8-e_7+e_6-e_5-e_4+e_3+e_2-e_1)}
	\oplus \mathbb{R}E_{e_6+e_1}, 
\end{multline*}
and $S_1'$ as 
\begin{multline*}
S_1':=\mathbb{R}E_{e_8-e_7}
	\oplus \mathbb{R}E_{\frac{1}{2}(e_8-e_7+e_6+e_5+e_4+e_3-e_2+e_1)}\\
	\oplus \mathbb{R}E_{\frac{1}{2}(e_8-e_7+e_6+e_5-e_4+e_3+e_2+e_1)}. 
\end{multline*}
Then, the direct sum $S_0'\oplus S_1'$ is isomorphic to the slice $A_3\oplus T_3$ of Table \ref{table:choice} 
for the $(SU(6)\times \mathbb{T})$-action on $\operatorname{Alt}(6,\mathbb{C})\oplus \mathbb{C}^6$. 
Further, the subspace 
\begin{align*}
S_0''&:=\mathbb{R}E_{\frac{1}{2}(e_8-e_7-e_6+e_5+e_4+e_3+e_2+e_1)} 
\end{align*}
in $\mathfrak{g}(2)$ is isomorphic to the slice $\mathbb{R}$ 
for the $\mathbb{T}$-action on $\mathbb{C}$. 
We set 
\begin{align*}
S_0&:=(S_0'\oplus S_0'')\oplus S_1'\simeq (A_3\oplus \mathbb{R})\oplus T_3. 
\end{align*}
By Lemma \ref{lem:linear}, 
$S_0$ satisfies $\mathfrak{n}=L_u\cdot S_0$. 
Therefore, Theorem \ref{thm:slice-n} has been verified for Case (E$_7$\ref{case:e75}). 


\subsection{Type E$_8$}
\label{subsec:e8}

In this subsection, 
we give a proof of Theorem \ref{thm:slice-n} for $\mathfrak{g}=\mathfrak{e}_8(\mathbb{C})$. 
In this case, $\mathfrak{g}_{\mathbb{R}}\simeq \mathfrak{e}_{8(8)}$. 
Then, $\mathfrak{a}_{\mathbb{R}}^*=\mathbb{R}^8$. 
A root system of $\mathfrak{g}$ is given by 
$\Delta =\{ \pm e_i\pm e_j:1\leq j<i\leq 8\} 
\sqcup \{ \pm \frac{1}{2}(e_8+\sum _{j=1}^7 (-1)^{n(j)}e_j):\sum _{j=1}^7 n(j)=0,2,4,6\} $ 
where $n(1),\ldots ,n(7)\in \{ 0,1\}$. 
We fix a positive system as 
$\Delta ^+=\{ e_i\pm e_j:1\leq j<i\leq 8\} 
\sqcup \{ \frac{1}{2}(e_8+\sum _{j=1}^7 (-1)^{n(j)}e_j):\sum _{j=1}^7 n(j)=0,2,4,6\} $. 
The simple roots $\alpha _1,\ldots ,\alpha _8$ are given by 
$\alpha _1=\frac{1}{2}(e_8-e_7-e_6-e_5-e_4-e_3-e_2+e_1)$, 
$\alpha _2=e_2+e_1$, and $\alpha _{i}=e_{i-1}-e_{i-2}$ $(3\leq i\leq 8)$. 
The highest root $\beta $ is written as 
$\beta =e_8+e_7=2\alpha _1+3\alpha _2+4\alpha _3+6\alpha _4+5\alpha _5
+4\alpha _6+3\alpha _7+2\alpha _8$. 

Let $\mathcal{O}_X$ be a nilpotent orbit in $\mathfrak{e}_8(\mathbb{C})$ 
with characteristic element $H=h_8E_8+\cdots +h_1E_1\in \mathfrak{a}_+$. 
Then, 
$\alpha _1(H)=\frac{1}{2}(h_8-h_7-h_6-h_5-h_4-h_3-h_2+h_1),~
\alpha _2(H)=h_2+h_1$, and 
$\alpha _i(H)=h_{i-1}-h_{i-2}\ (i=3,4,\ldots ,8)$. 
Hence, the weighted Dynkin diagram $\Omega (\mathcal{O}_X)=(m_1,\ldots ,m_8)$ is given by 
$(\frac{1}{2}(h_8-h_7-h_6-h_5-h_4-h_3-h_2+h_1),h_2+h_1,h_2-h_1,h_3-h_2,h_4-h_4,h_5-h_4,h_6-h_5,
h_7-h_6)$. 

\begin{figure}[htbp]
\begin{align*}
\SelectTips{cm}{12}
	\objectmargin={1pt}
	\xygraph{
		\circ ([]!{+(0,-.3)} {\alpha_1},[]!{+(0,+.3)} {m_1}) - [r]
		\circ ([]!{+(0,-.3)} {\alpha_3},[]!{+(0,+.3)} {m_3}) - [r]
		\circ ([]!{+(.3,-.3)} {\alpha_4},[]!{+(0,+.3)} {m_4}) (
			- [d] \circ ([]!{+(0,-0.3)} {\alpha_2},[]!{+(0.3,0)} {m_2}),
			- [r] \circ ([]!{+(0,-.3)} {\alpha_5},[]!{+(0,+.3)} {m_5})
			- [r] \circ ([]!{+(0,-.3)} {\alpha_6},[]!{+(0,+.3)} {m_6})
			- [r] \circ ([]!{+(0,-.3)} {\alpha_7},[]!{+(0,+.3)} {m_7})
			- [r] \circ ([]!{+(0,-.3)} {\alpha_8},[]!{+(0,+.3)} {m_8})
		)
	}
\end{align*}
\caption{Weighted Dynkin diagram of $\mathcal{O}_X$ in $\mathfrak{e}_8(\mathbb{C})$}
\end{figure}

A nilpotent orbit $\mathcal{O}_X$ in $\mathfrak{e}_8(\mathbb{C})$ is spherical if and only if 
$\Omega (\mathcal{O}_X)$ satisfies one of the following cases: 

\begin{enumerate}
	\renewcommand{\labelenumi}{(E$_8$\theenumi) }
	\item $\Omega (\mathcal{O}_X)=(0,0,0,0,0,0,0,1)$. 
	\label{case:e81}
	\item $\Omega (\mathcal{O}_X)=(1,0,0,0,0,0,0,0)$. 
	\label{case:e82}
	\item $\Omega (\mathcal{O}_X)=(0,0,0,0,0,0,1,0)$. 
	\label{case:e83}
	\item $\Omega (\mathcal{O}_X)=(0,1,0,0,0,0,0,0)$. 
	\label{case:e84}
\end{enumerate}

It follows from Lemma \ref{lem:height-max} that 
the the height of $\mathcal{O}_X$ equals two 
for Cases (E$_8$\ref{case:e81}), (E$_8$\ref{case:e82}), 
and three for Cases (E$_8$\ref{case:e83}), (E$_8$\ref{case:e84}), 

\subsubsection{Case (E$_8$\ref{case:e81})}
\label{subsubsec:e8}

Let $\Omega (\mathcal{O}_X)$ be $(0,0,0,0,0,0,0,1)$. 
Then, $H\in \mathfrak{a}$ is given by 
\begin{align*}
H=E_8+E_7. 
\end{align*}
This $\mathcal{O}_X$ is the minimal nilpotent orbit in $\mathfrak{e}_8(\mathbb{C})$ 
with dimension 58. 

The Levi subalgebra $\mathfrak{l}=\mathfrak{g}(0)$ is given by 
\begin{multline*}
\mathfrak{l}=\mathfrak{a}\oplus \bigoplus _{1\leq j<i\leq 6}\mathfrak{g}_{\pm e_i\pm e_j}
	\oplus \mathfrak{g}_{\pm (e_8-e_7)} \\
\oplus \bigoplus _{\sum _{j=1}^6 n(j)=1,3,5}
	\mathfrak{g}_{\pm \frac{1}{2}(e_8-e_7+\sum _{j=1}^6(-1)^{n(j)}e_j)}. 
\end{multline*}
This implies that 
\begin{align*}
\mathfrak{l}\simeq \mathfrak{e}_7(\mathbb{C})\oplus \mathbb{C}. 
\end{align*}

The $\operatorname{ad}(H)$-eigenspace $\mathfrak{g}(2)$ is 
\begin{align*}
\mathfrak{g}(2)=\mathfrak{g}_{e_8-e_7}\simeq \mathbb{C}. 
\end{align*}
Thus, the action of the Levi subgroup $L_{\mathbb{C}}$ on $\mathfrak{n}$ 
is geometrically equivalent to the $\mathbb{C}^{\times}$-action on $\mathbb{C}$. 
It follows from (1) of Table \ref{table:linear} that 
this action is a multiplicity-free action. 

We take the subspace $S_0$ as 
\begin{align*}
S_0:=\mathbb{R}E_{e_8-e_7}\simeq \mathbb{R}. 
\end{align*}
Then, $\mathfrak{n}=L_u\cdot S_0$. 
Therefore, Theorem \ref{thm:slice-n} has been verified in this case. 

\subsubsection{Case (E$_8$\ref{case:e82})}

Let $\Omega (\mathcal{O}_X)$ be $(1,0,0,0,0,0,0,0)$. 
Then, 
\begin{align*}
H=2E_8. 
\end{align*}

The Levi subalgebra $\mathfrak{l}=\mathfrak{g}(0)$ is given by 
\begin{align*}
\mathfrak{l}=\mathfrak{a}\oplus \bigoplus _{1\leq j<i\leq 7}\mathfrak{g}_{\pm e_i\pm e_j}. 
\end{align*}
This means that 
\begin{align*}
\mathfrak{l}\simeq \mathfrak{so}(14,\mathbb{C})\oplus \mathbb{C}. 
\end{align*}

The $\operatorname{ad}(H)$-eigenspace $\mathfrak{g}(2)$ is written as 
\begin{align*}
\mathfrak{g}(2)=\bigoplus _{1\leq j\leq 7}\mathfrak{g}_{e_8\pm e_j}
\end{align*}
Then, $\mathfrak{g}(2)$ is isomorphic to $\mathbb{C}^{14}$, 
from which 
\begin{align*}
\mathfrak{n}\simeq \mathbb{C}^{14}. 
\end{align*}
This implies that 
the $L_{\mathbb{C}}$-action on $\mathfrak{n}$ 
is geometrically equivalent to the $(SO(14,\mathbb{C})\times \mathbb{C}^{\times})$-action 
on $\mathbb{C}^{14}$. 
It follows from (6) of Table \ref{table:linear} 
that this action is a multiplicity-free space. 

We take the subspace in $\mathfrak{n}$ as 
\begin{align*}
S_0:=\mathbb{R}E_{e_8+e_7}\oplus \mathbb{R}E_{e_8-e_7}. 
\end{align*}
Then, $S_0$ is isomorphic to the slice $D_{1,1}$ of Table \ref{table:choice} 
for the $(SO(14)\times \mathbb{T})$-action on $\mathbb{C}^{14}$. 
By Lemma \ref{lem:linear}, $\mathfrak{n}=L_u\cdot S_0$. 
Therefore, Theorem \ref{thm:slice-n} for Case (E$_8$\ref{case:e82}) has been verified. 

\subsubsection{Case (E$_8$\ref{case:e83})}

Let $\Omega (\mathcal{O}_X)$ be $(0,0,0,0,0,0,1,0)$. 
Then, $H$ forms 
\begin{align*}
H=2E_8+E_7+E_6. 
\end{align*}

The Levi subalgebra $\mathfrak{l}$ is given by 
\begin{multline*}
\mathfrak{l}=\mathfrak{a}\oplus \mathfrak{g}_{\pm (e_7-e_6)}
	\oplus \bigoplus _{1\leq j<i\leq 5}\mathfrak{g}_{\pm e_i\pm e_j}
	\\
	\oplus \bigoplus _{\sum _{j=1}^5 n(j)=0,2,4}\mathfrak{g}_
	{\pm \frac{1}{2}(e_8-e_7-e_6+\sum _{j=1}^5 (-1)^{n(j)}e_j)}. 
\end{multline*}
This implies that 
\begin{align*}
\mathfrak{l}\simeq \mathfrak{e}_6(\mathbb{C})\oplus \mathfrak{sl}(2,\mathbb{C})
	\oplus \mathbb{C}. 
\end{align*}

The $\operatorname{ad}(H)$-eigenspace $\mathfrak{g}(2)$ is written as 
\begin{align*}
\mathfrak{g}(2)&=\mathfrak{g}_{e_7+e_6}
	\oplus \bigoplus _{1\leq j\leq 5}\mathfrak{g}_{e_8\pm e_j}
	\oplus \bigoplus _{\sum _{j=1}^5 n(j)=0,2,4}\mathfrak{g}_
	{\frac{1}{2}(e_8+e_7+e_6+\sum _{j=1}^5 (-1)^{n(j)}e_j)}. 
\end{align*}
Then, $\mathfrak{g}(2)$ is isomorphic to the complexified Jordan algebra $\mathfrak{J}_{\mathbb{C}}$. 
Further, 
\begin{align*}
\mathfrak{g}(3)&=\mathfrak{g}_{e_8+e_7}\oplus \mathfrak{g}_{e_8+e_6}\simeq \mathbb{C}^2. 
\end{align*}
Hence, 
\begin{align*}
\mathfrak{n}\simeq \mathfrak{J}_{\mathbb{C}}\oplus \mathbb{C}^2. 
\end{align*}

The semisimple part of the Levi subgroup $L_{\mathbb{C}}$ is isomorphic to 
$E_6(\mathbb{C})\times SL(2,\mathbb{C})$, 
which acts on $\mathfrak{J}_{\mathbb{C}}\oplus \mathbb{C}^2$ diagonally, namely, 
\begin{align*}
(g,h)\cdot (v,w)=(gv,hw). 
\end{align*}
Further. the center $\mathbb{C}^{\times}$ acts by 
\begin{align*}
s\cdot (v,w)=(s^2v,s^3w). 
\end{align*}
This implies that 
the $L_{\mathbb{C}}$-action on $\mathfrak{n}$ is geometrically equivalent to 
the decomposable action consisting of the irreducible 
$(E_6(\mathbb{C})\times \mathbb{C}^{\times})$-action on $\mathbb{C}^{27}$ 
((7) of Table \ref{table:linear}) and the irreducible $SL(2,\mathbb{C})$-action on $\mathbb{C}^2$ 
((2) of Table \ref{table:linear}). 
By Lemma \ref{lem:linear}, 
this action is a multiplicity-free action. 

We take the subspace in $\mathfrak{g}(2)$ as 
\begin{multline*}
S_0':=\mathbb{R}E_{e_8+e_5}\oplus 
	\mathbb{R}E_{\frac{1}{2}(e_8+e_7+e_6-e_5+e_4+e_3+e_2-e_1)}\\
	\oplus \mathbb{R}E_{\frac{1}{2}(e_8+e_7+e_6-e_5-e_4-e_3-e_2+e_1)}. 
\end{multline*}
Then, $S_0'$ is isomorphic to the slice $D_3$ for the $(E_6\times \mathbb{T})$-action 
on $\mathbb{C}^{27}$. 
By Lemma \ref{lem:linear}, $\mathfrak{g}(2)=(E_6\times \mathbb{T})\cdot S_0'$. 
Similarly, the subspace 
\begin{align*}
S_0''=\mathbb{R}E_{e_8+e_7}. 
\end{align*}
in $\mathfrak{g}(3)$ is isomorphic to $T_1$, and then 
$\mathfrak{g}(3)=SU(2)\cdot S_0''$. 
Hence, 
\begin{align*}
S_0:=S_0'\oplus S_0''\simeq D_3\oplus T_1
\end{align*}
satisfies $\mathfrak{n}=L_u\cdot S_0$, 
from which Theorem \ref{thm:slice-n} for Case (E$_8$\ref{case:e83}) has been verified. 

\subsubsection{Case (E$_8$\ref{case:e84})}

Let $\Omega (\mathcal{O}_X)$ be $(0,1,0,0,0,0,0,0)$. 
Then, 
\begin{align*}
H=\frac{1}{2}(5E_8+E_7+E_6+\cdots +E_1). 
\end{align*}

The Levi subalgebra $\mathfrak{l}=\mathfrak{g}(0)$ is given by 
\begin{align*}
\mathfrak{l}=\mathfrak{a}\oplus \bigoplus _{1\leq j<i\leq 7}\mathfrak{g}_{\pm (e_i-e_j)}
	\oplus \bigoplus _{\sum _{j=1}^7n(j)=6}
	\mathfrak{g}_{\pm \frac{1}{2}(e_8+\sum _{j=1}^7 (-1)^{n(j)}e_j)}. 
\end{align*}
This means that 
\begin{align*}
\mathfrak{l}\simeq \mathfrak{sl}(8,\mathbb{C})\oplus \mathbb{C}. 
\end{align*}

The $\operatorname{ad}(H)$-eigenspace $\mathfrak{g}(2)$ is written as 
\begin{align*}
\mathfrak{g}(2)&=\bigoplus _{1\leq j\leq 7}\mathfrak{g}_{e_8-e_j}\oplus 
	\bigoplus _{\sum _{j=1}^7n(j)=2}
	\mathfrak{g}_{\frac{1}{2}(e_8+\sum _{j=1}^7 (-1)^{n(j)}e_j)}. 
\end{align*}
Then, $\mathfrak{g}(2)$ is isomorphic to $\operatorname{Alt}(8,\mathbb{C})$. 
Moreover, the eigenspace $\mathfrak{g}(3)$ forms 
\begin{align*}
\mathfrak{g}(3)&=\bigoplus _{1\leq j\leq 7}\mathfrak{g}_{e_8+e_j}
	\oplus \mathfrak{g}_{\frac{1}{2}(e_8+e_7+e_6+e_5+e_4+e_3+e_2+e_1)}\simeq \mathbb{C}^8. 
\end{align*}
Hence, 
\begin{align*}
\mathfrak{n}\simeq \operatorname{Alt}(8,\mathbb{C})\oplus \mathbb{C}^8. 
\end{align*}

The semisimple part $SL(8,\mathbb{C})$ 
of the Levi subgroup $L_{\mathbb{C}}$ acts on 
$\operatorname{Alt}(8,\mathbb{C})\oplus \mathbb{C}^8$ diagonally, namely,  
\begin{align*}
g\cdot (A,v)=(gA\,{}^tg,gv), 
\end{align*}
and its center $\mathbb{C}^{\times}$ acts by 
\begin{align*}
s\cdot (A,v)=(s^2A,s^3v). 
\end{align*}
Then, the $L_{\mathbb{C}}$-action on $\mathfrak{n}$ is geometrically equivalent to 
the indecomposable $(SL(8,\mathbb{C})\times \mathbb{C}^{\times})$-action 
on $\operatorname{Alt}(8,\mathbb{C})\oplus \mathbb{C}^8$. 
It follows from (8) of Table \ref{table:linear} that 
this action is a multiplicity-free action. 

We take the subspace $S_0'\subset \mathfrak{g}(2)$ as 
\begin{multline*}
S_0':=\mathbb{R}E_{e_8-e_1}
	\oplus \mathbb{R}E_{\frac{1}{2}(e_8+e_7+e_6+e_5+e_4-e_3-e_2+e_1)}\\
	\oplus \mathbb{R}E_{\frac{1}{2}(e_8+e_7+e_6-e_5-e_4+e_3+e_2+e_1)}
	\oplus \mathbb{R}E_{\frac{1}{2}(e_8-e_7-e_6+e_5+e_4+e_3+e_2+e_1)}, 
\end{multline*}
and $S_0''\subset \mathfrak{g}(3)$ as
\begin{align*}
S_0'':=\mathbb{R}E_{e_8+e_7}\oplus \mathbb{R}E_{e_8+e_5}\oplus 
	\mathbb{R}E_{e_8+e_3}\oplus \mathbb{R}E_{e_8+e_1}. 
\end{align*}
We set 
\begin{align*}
S_0:=S_0'\oplus S_0''. 
\end{align*}
Then, $S_0$ is isomorphic to the slice $A_4\oplus T_4$ 
of Table \ref{table:choice} for the $(SU(8)\times \mathbb{T})$-action 
on $\operatorname{Alt}(8,\mathbb{C})\oplus \mathbb{C}^8$. 
It follows from Lemma \ref{lem:linear} that $\mathfrak{n}=L_u\cdot S_0$. 
Therefore, Theorem \ref{thm:slice-n} has been verified. 


\subsection{Type F$_4$}
\label{subsec:f4}

In this subsection, we give a proof of Theorem \ref{thm:slice-n} 
for $\mathfrak{g}=\mathfrak{f}_4(\mathbb{C})$. 
In this case, $\mathfrak{g}_{\mathbb{R}}\simeq \mathfrak{f}_{4(4)}$. 
Then, $\mathfrak{a}^*_{\mathbb{R}}=\mathbb{R}e_1\oplus \cdots \oplus \mathbb{R}e_4$. 
A root system $\Delta $ is $\Delta =\{ \pm e_i\pm e_j:1\leq i<j\leq 4\} 
\sqcup \{ \pm e_i:1\leq i\leq 4\} \sqcup \{ \pm \frac{1}{2}(e_1\pm e_2\pm e_3\pm e_4)\} $. 
We fix a positive system $\Delta ^+$ as 
$\Delta ^+=\{ e_i\pm e_j:1\leq i<j\leq 4\} 
\sqcup \{ e_i:1\leq i\leq 4\} \sqcup \{ \frac{1}{2}(e_1\pm e_2\pm e_3\pm e_4)\} $. 
The simple roots $\alpha _1,\ldots ,\alpha _4$ are given by 
$\alpha _1=\frac{1}{2}(e_1-e_2-e_3-e_4)$, 
$\alpha _2=e_4$, $\alpha _3=e_3-e_4$, and $\alpha _4=e_2-e_3$. 
The highest root $\beta =e_1+e_2$ is written as 
$\beta =2\alpha _1+4\alpha _2+3\alpha _3+2\alpha _4$. 

Let $\mathcal{O}_X$ be a nilpotent orbit in $\mathfrak{f}_4(\mathbb{C})$ 
with characteristic element $H=h_1H_1+\cdots +h_4H_4\in \mathfrak{a}_+$. 
Then, the weighted Dynkin diagram $\Omega (\mathcal{O}_X)=(m_1,\ldots ,m_4)$ is given by 
$(\frac{1}{2}(h_1-h_2-h_3-h_4),h_4,h_3-h_4,h_2-h_3)$. 

\begin{figure}[htbp]
\begin{align*}
\SelectTips{cm}{12}
	\objectmargin={1pt}
	\xygraph{!~:{@{<=}}
		\circ ([]!{+(0,-.3)} {\alpha_1},[]!{+(0,+.3)} {m_1}) - [r]
		\circ ([]!{+(0,-.3)} {\alpha_2},[]!{+(0,+.3)} {m_2}) : [r]
		\circ ([]!{+(0,-.3)} {\alpha_3},[]!{+(0,+.3)} {m_3}) - [r]
		\circ ([]!{+(0,-.3)} {\alpha_4},[]!{+(0,+.3)} {m_4})
	}
\end{align*}
\caption{Weighted Dynkin diagram of $\mathcal{O}_X$ in $\mathfrak{f}_4(\mathbb{C})$}
\end{figure}

A nilpotent orbit $\mathcal{O}_X$ is spherical if and only if 
$\Omega (\mathcal{O}_X)$ satisfies one of the following cases: 
\begin{enumerate}
	\renewcommand{\labelenumi}{(F$_4$\theenumi) }
	\item $\Omega (\mathcal{O}_X)=(0,0,0,1)$. 
	\label{case:f41}
	\item $\Omega (\mathcal{O}_X)=(1,0,0,0)$. 
	\label{case:f42}
	\item $\Omega (\mathcal{O}_X)=(0,0,1,0)$. 
	\label{case:f43}
\end{enumerate}

The height of $\mathcal{O}_X$ equals two for Cases (F$_4$\ref{case:f41}) and (F$_4$\ref{case:f42}), 
and three for Case (F$_4$\ref{case:f43}). 

\subsubsection{Case (F$_4$\ref{case:f41})}
\label{subsubsec:f4}

Let $\Omega (\mathcal{O}_X)$ be $(0,0,0,1)$. 
Then, $H\in \mathfrak{a}$ is given by 
\begin{align*}
H=E_1+E_2. 
\end{align*}
This $\mathcal{O}_X$ is the minimal nilpotent orbit with dimension 16. 

The Levi subalgebra $\mathfrak{l}$ is given by 
\begin{align*}
\mathfrak{l}=\mathfrak{a}\oplus 
	\mathfrak{g}_{\frac{1}{2}(\pm (e_1-e_2)\pm e_3\pm e_4)}
	\oplus \mathfrak{g}_{\pm (e_1-e_2)}\oplus \mathfrak{g}_{\pm e_3\pm e_4}
	\oplus \mathfrak{g}_{\pm e_3}
	\oplus \mathfrak{g}_{\pm e_4}. 
\end{align*}
This means that 
\begin{align*}
\mathfrak{l}\simeq \mathfrak{sp}(3,\mathbb{C})\oplus \mathbb{C}. 
\end{align*}

The $\operatorname{ad}(H)$-eigenspace $\mathfrak{g}(2)$ is written as 
\begin{align*}
\mathfrak{g}(2)=\mathfrak{g}_{e_1+e_2}\simeq \mathbb{C}. 
\end{align*}
This implies that 
the action of the Levi subgroup $L_{\mathbb{C}}$ on $\mathfrak{n}$ 
is geometrically equivalent to the $\mathbb{C}^{\times}$-action on $\mathbb{C}$. 
It follows from (1) of Table \ref{table:linear} that 
this action is a multiplicity-free actions. 

We take 
\begin{align*}
S_0=\mathbb{R}E_{e_1+e_2}. 
\end{align*}
Then, it follows from Lemma \ref{lem:linear} that 
$\mathfrak{n}=L_u\cdot S_0$. 
Therefore, Theorem \ref{thm:slice-n} has been verified for Case (F$_4$\ref{case:f41}). 

\subsubsection{Case (F$_4$\ref{case:f42})}

Let $\Omega (\mathcal{O}_X)=(1,0,0,0)$. 
Then, 
\begin{align*}
H=2E_1. 
\end{align*}

The Levi subalgebra $\mathfrak{l}$ is given by 
\begin{align*}
\mathfrak{l}=\mathfrak{a}\oplus \bigoplus _{2\leq i<j\leq 4}
	\mathfrak{g}_{\pm e_{i}\pm e_{j}}
	\oplus \bigoplus _{2\leq i\leq 4}\mathfrak{g}_{\pm e_{i}}
\simeq \mathfrak{so}(7,\mathbb{C})\oplus \mathbb{C}. 
\end{align*}
The eigenspace $\mathfrak{g}(2)$ is of the form: 
\begin{align*}
\mathfrak{g}(2)&=\mathfrak{g}_{e_1}\oplus 
	\bigoplus _{2\leq i<j\leq 4}\mathfrak{g}_{e_1\pm e_{j}}. 
\end{align*}
Then, 
\begin{align*}
\mathfrak{n}\simeq \mathbb{C}^7. 
\end{align*}
This implies that the $L_{\mathbb{C}}$-action on $\mathfrak{n}$ 
is geometrically equivalent to the irreducible $(SO(7,\mathbb{C})\times \mathbb{C}^{\times})$-action 
on $\mathbb{C}^7$. 
It follows from (6) of Table \ref{table:linear} that 
this action is a multiplicity-free action. 

We take the subspace $S_0$ as 
\begin{align*}
S_0=\mathbb{R}E_{e_1+e_2}\oplus \mathbb{R}E_{e_1-e_2}. 
\end{align*}
Then, this is isomorphic to the slice $D_{1,1}$ of Table \ref{table:choice} 
for the $(SO(7)\times \mathbb{T})$-action on $\mathbb{C}^7$. 
By Lemma \ref{lem:linear}, $\mathfrak{n}=L_u\cdot S_0$. 
Hence, Theorem \ref{thm:slice-n} has been verified for Case (F$_4$\ref{case:f42}). 

\subsubsection{Case (F$_4$\ref{case:f43})}

Let $\Omega (\mathcal{O}_X)$ be $(0,0,1,0)$. 
Then, 
\begin{align*}
H=2E_1+E_2+E_3. 
\end{align*}

The Levi subalgebra $\mathfrak{l}$ is given by 
\begin{align*}
\mathfrak{l}=\mathfrak{a}\oplus \mathfrak{g}_{\pm (e_2-e_3)}\oplus \mathfrak{g}_{\pm e_4}
	\oplus \mathfrak{g}_{\pm \frac{1}{2}(e_1-e_2-e_3)\pm \frac{1}{2}e_4}. 
\end{align*}
This means that 
\begin{align*}
\mathfrak{l}\simeq \mathfrak{sl}(3,\mathbb{C})\oplus \mathfrak{sl}(2,\mathbb{C})\oplus 
	\mathbb{C}. 
\end{align*}

The $\operatorname{ad}(H)$-eigenspace $\mathfrak{g}(2)$ is written as 
\begin{align*}
\mathfrak{g}(2)&=\mathfrak{g}_{e_1}\oplus \mathfrak{g}_{e_2+e_3}
	\oplus \mathfrak{g}_{e_1\pm e_4}
	\oplus \mathfrak{g}_{\frac{1}{2}(e_1+e_2+e_3)\pm \frac{1}{2}e_4}. 
\end{align*}
Then, $\mathfrak{g}(2)$ is isomorphic to $\operatorname{Sym}(3,\mathbb{C})$. 
Further, $\mathfrak{g}(3)$ forms 
\begin{align*}
\mathfrak{g}(3)=\mathfrak{g}_{e_1+e_2}\oplus \mathfrak{g}_{e_1+e_3}\simeq \mathbb{C}^2. 
\end{align*}
Hence, 
\begin{align*}
\mathfrak{n}\simeq \operatorname{Sym}(3,\mathbb{C})\oplus \mathbb{C}^2. 
\end{align*}

The semisimple part $SL(3,\mathbb{C})\times SL(2,\mathbb{C})$ of the 
Levi subgroup $L_{\mathbb{C}}$ acts on $\operatorname{Sym}(3,\mathbb{C})\oplus \mathbb{C}^2$ by 
\begin{align*}
(g,h)\cdot (A,v)=(gA\,{}^tg,hv), 
\end{align*}
and its center $\mathbb{C}^{\times}$ by 
\begin{align*}
s\cdot (A,v)=(s^2A,s^3v). 
\end{align*}
Then, the $L_{\mathbb{C}}$-action on $\mathfrak{n}$ is geometrically equivalent to 
the decomposable action consisting of 
the $(SL(3,\mathbb{C})\times \mathbb{C}^{\times})$-action on $\operatorname{Sym}(3,\mathbb{C})$ 
((3) of Table \ref{table:linear}) 
and the $SL(2,\mathbb{C})$-action on $\mathbb{C}^2$ ((2) of Table \ref{table:linear}). 
Hence, this action is a multiplicity-free action. 

We take the subspace in $\mathfrak{g}(2)$ as
\begin{align*}
S_0':=\mathbb{R}E_{e_1+e_4}\oplus \mathbb{R}E_{e_1-e_4}
	\oplus \mathbb{R}E_{e_2+e_3}. 
\end{align*}
Then, $S_0'$ is isomorphic to the slice $D_3$ of Table \ref{table:choice}. 
By Lemma \ref{lem:linear}, $\mathfrak{g}(2)=(SU(3)\times \mathbb{T})\cdot S_0'$. 
Similarly, 
\begin{align*}
S_0'':=\mathbb{R}E_{e_1+e_2}
\end{align*}
is isomorphic to $T_1$ of Table \ref{table:linear}, and then 
$\mathfrak{g}(3)=SU(2)\cdot S_0''$. 
We set 
\begin{align*}
S_0=S_0'\oplus S_0''\simeq D_3\oplus T_1. 
\end{align*}
Then, the linear space $S_0$ 
satisfies $\mathfrak{n}=L_u\cdot S_0$. 
Therefore, Theorem \ref{thm:slice-n} has been verified for Case (F$_4$\ref{case:f43}). 


\subsection{Type G$_2$}
\label{subsec:g2}

In this subsection, we give a proof of Theorem \ref{thm:slice-n} 
for $\mathfrak{g}=\mathfrak{g}_2(\mathbb{C})$. 
In this case, $\mathfrak{g}_{\mathbb{R}}\simeq \mathfrak{g}_{2(2)}$. 
Then, $\mathfrak{a}_{\mathbb{R}}^*=\{ v\in \mathbb{R}e_1\oplus \mathbb{R}e_2\oplus \mathbb{R}e_3
:\langle v,e_1+e_2+e_3\rangle =0\} $. 
A root system $\Delta \equiv \Delta (\mathfrak{g},\mathfrak{a})$ is 
$\Delta =\{ \pm (e_i-e_j):1\leq i<j\leq 3\} \sqcup \{ \pm (-2e_1+e_2+e_3),
\pm (e_1-2e_2-e_3),\pm (-e_1-e_2+2e_3)\} $. 
We fix a positive system $\Delta ^+$ as $\Delta ^+=\{ e_1-e_2,~-e_2+e_3,~-e_1+e_3,~
-2e_1+e_2+e_3,~e_1-2e_2+e_3,~-e_1-e_2+2e_3\} $. 
The simple roots are $\alpha _1:=e_1-e_2$ and $\alpha _2:=-2e_1+e_2+e_3$. 
The highest root $\beta =-e_1-e_2+2e_3$ is written as 
$\beta =3\alpha _1+2\alpha _2$. 

Let $\mathcal{O}_X$ be a nilpotent orbit in $\mathfrak{g}_2(\mathbb{C})$ 
with characteristic element $H=h_1E_1+h_2E_2+h_3E_3\in \mathfrak{a}_+$ with $h_1+h_2+h_3=0$. 
Then, $\alpha _1(H)=h_1-h_2$ and $\alpha _2(H)=-2h_1+h_2+h_3$. 
Hence, the weighted Dynkin diagram $\Omega (\mathcal{O}_X)=(m_1,m_2)$ is given by 
$(h_1-h_2,-2h_1+h_2+h_3)$. 

\begin{figure}[htbp]
\begin{align*}
\SelectTips{cm}{12}
	\objectmargin={1pt}
	\xygraph{!~:{@3{<-}}
		\circ ([]!{+(0,-.3)} {\alpha_1},[]!{+(0,+.3)} {m_1}) : [r]
		\circ ([]!{+(0,-.3)} {\alpha_2},[]!{+(0,+.3)} {m_2})
	}
\end{align*}
\caption{Weighted Dynkin diagram of $\mathcal{O}_X$ in $\mathfrak{g}_2$}
\end{figure}

A nilpotent orbit $\mathcal{O}_X$ is spherical 
if and only if $\Omega (\mathcal{O}_X)$ satisfies either Case (G$_2$\ref{case:g21}) or Case (G$_2$\ref{case:g22}): 
\begin{enumerate}
	\renewcommand{\labelenumi}{(G$_2$\theenumi) }
	\item $\Omega (\mathcal{O}_X)=(0,1)$. 
	\label{case:g21}
	\item $\Omega (\mathcal{O}_X)=(1,0)$. 
	\label{case:g22}
\end{enumerate}

It follows from Lemma \ref{lem:height-max} that 
the height of $\mathcal{O}_X$ equals two if $\Omega (\mathcal{O}_X)=(0,1)$ 
(Case (G$_2$\ref{case:g21})), 
and three if $\Omega (\mathcal{O}_X)=(1,0)$ (Case (G$_2$\ref{case:g22})). 

\subsubsection{Case (G$_2$\ref{case:g21})} 
\label{subsubsec:g2}

Let $\Omega (\mathcal{O}_X)$ be $(0,1)$. 
This $\mathcal{O}_X$ is the minimal nilpotent orbit with dimension six. 
Then, $H\in \mathfrak{a}$ is of the form 
\begin{align*}
H=\frac{1}{3}(-E_1-E_2+2E_3). 
\end{align*}

The Levi subalgebra $\mathfrak{l}$ is given by 
\begin{align*}
\mathfrak{l}=\mathfrak{a}\oplus \mathfrak{g}_{\pm (e_1-e_2)}. 
\end{align*}
This means that 
\begin{align*}
\mathfrak{l}\simeq \mathfrak{sl}(2,\mathbb{C})\oplus \mathbb{C}. 
\end{align*}

The $\operatorname{ad}(H)$-eigenspace $\mathfrak{g}(2)$ is 
\begin{align*}
\mathfrak{g}(2)=\mathfrak{g}_{-e_1-e_2+2e_3}\simeq \mathbb{C}. 
\end{align*}
Then, the action of the Levi subgroup $L_{\mathbb{C}}$ on $\mathfrak{n}$ 
is geometrically equivalent to the $\mathbb{C}^{\times}$-action on $\mathbb{C}$. 
It follows from (1) of Table \ref{table:linear} that 
this action is a multiplicity-free action. 

We take the subspace $S_0$ to be 
\begin{align*}
S_0=\mathbb{R}E_{-e_1-e_2+2e_3}. 
\end{align*}
Then, $\mathfrak{n}=L_u\cdot S_0$. 
Therefore, 
Theorem \ref{thm:slice-n} has been verified for Case (G$_2$\ref{case:g21}). 

\subsubsection{Case (G$_2$\ref{case:g22})} 

Let $\Omega (\mathcal{O}_X)$ be $(1,0)$. 
Then, 
\begin{align*}
H=-E_2+E_3. 
\end{align*}

The Levi subalgebra $\mathfrak{l}$ is given by 
\begin{align*}
\mathfrak{l}=\mathfrak{a}\oplus \mathfrak{g}_{\pm (-2e_1+e_2+e_3)}. 
\end{align*}
This means that 
\begin{align*}
\mathfrak{l}\simeq \mathfrak{sl}(2,\mathbb{C})\oplus \mathbb{C}. 
\end{align*}

The $\operatorname{ad}(H)$-eigenspace $\mathfrak{g}(2)$ is written as 
\begin{align*}
\mathfrak{g}(2)&=\mathfrak{g}_{-e_2+e_3}\simeq \mathbb{C}. 
\end{align*}
Further, the eigenspace $\mathfrak{g}(3)$ is 
\begin{align*}
\mathfrak{g}(3)&=\mathfrak{g}_{e_1-2e_2+e_3}\oplus \mathfrak{g}_{-e_1-e_2+2e_3}
\simeq \mathbb{C}^2. 
\end{align*}
Then, 
\begin{align*}
\mathfrak{n}&\simeq \mathbb{C}\oplus \mathbb{C}^2. 
\end{align*}

The semisimple part $SL(2,\mathbb{C})$ of $L_{\mathbb{C}}$ acts 
on $\mathbb{C}$ trivially and on $\mathbb{C}^2$ as the standard representation. 
Its center $\mathbb{C}^{\times}$ acts by
\begin{align*}
s\cdot (z,v)=(s^2z,s^3v). 
\end{align*}
Then, the $L_{\mathbb{C}}$-action on $\mathfrak{n}$ is geometrically equivalent to 
the decomposable action consisting of the $\mathbb{C}^{\times}$-action on $\mathbb{C}$ 
((1) of Table \ref{table:linear}) 
and the $SL(2,\mathbb{C})$-action on $\mathbb{C}^2$ ((2) of Table \ref{table:linear}). 
Hence, this action is a multiplicity-free action. 

We take 
\begin{align*}
S_0':=\mathbb{R}E_{-e_2+e_3}\simeq \mathbb{R}. 
\end{align*}
Then, $S_0'$ satisfies $\mathfrak{g}(2)=\mathbb{T}\cdot S_0''$. 
Further, we take the subset $S_0''$ of $\mathfrak{g}(3)$ as 
\begin{align*}
S_0'':=\mathbb{R}E_{-e_1-e_2+2e_3}. 
\end{align*}
Then, $S_0''$ is isomorphic to the slice $T_1$ for the $SU(2)$-action on $\mathbb{C}^2$. 
It follows from Lemma \ref{lem:linear} that $\mathfrak{g}(3)=SU(2)\cdot S_0''$. 
Hence, we set 
\begin{align*}
S_0:=S_0'\oplus S_0''\simeq \mathbb{R}\oplus T_1. 
\end{align*}
Then, $S_0$ satisfies $\mathfrak{n}=L_u\cdot S_0$. 
Therefore, Theorem \ref{thm:slice-n} has been verified. 

\subsection{Proof of Theorem \ref{thm:slice-n}} 
\label{subsec:proof-n}

\begin{proof}[Proof of Theorem \ref{thm:slice-n}]
In Sections \ref{subsec:a}--\ref{subsec:g2}, 
we have given $S_0$ for the $L_u$-action on $\mathfrak{n}$ 
satisfying the properties 
(\ref{cond:vectorspace})--(\ref{cond:s1-n}) explicitly for each $\mathcal{O}_X$ in $\mathfrak{g}$. 
Then, Theorem \ref{thm:slice-n} follows from Lemmas \ref{lem:linear} and \ref{lem:linear2}. 
\end{proof}


\begin{table}[p]
\rotatebox[origin=c]{90}{
	\begin{minipage}{\textheight}
	\begin{align*}
	\begin{array}{cccccc}
	\hline 
	 \mathfrak{g} & \Omega (\mathcal{O}_X)=(m_1,m_2,\ldots ) & \mathfrak{l}=\mathfrak{g}(0) & \mathfrak{g}(2) & \mathfrak{g}(3) 
	& S_0 \\
	\hline 
	\mathfrak{a}_{n-1} & m_p=m_{n-p}=1~(p<\frac{n}{2}) 
		& \mathfrak{sl}(p,\mathbb{C})\oplus \mathfrak{sl}(n-2p,\mathbb{C})\oplus 
			\mathfrak{sl}(p,\mathbb{C})\oplus \mathbb{C}^2 
		& M(p,\mathbb{C}) 
		& \{ 0\} 
		& \mathbb{R}^p\\
	\mathfrak{a}_{2p-1}& m_{p}=2 
		& \mathfrak{sl}(p,\mathbb{C})\oplus 
			\mathfrak{sl}(p,\mathbb{C})\oplus \mathbb{C}^2 
		& M(p,\mathbb{C}) 
		& \{ 0\} 
		& \mathbb{R}^p\\
	\mathfrak{b}_n & m_1=1 
		& \mathfrak{so}(2n-1,\mathbb{C})\oplus \mathbb{C} 
		& \mathbb{C}^{2n-1} 
		& \{ 0\} 
		& \mathbb{R}^2\\
	\mathfrak{b}_n & m_{2p}=1 
		& \mathfrak{sl}(2p,\mathbb{C})\oplus \mathfrak{so}(2n-4p+1,\mathbb{C})
			\oplus \mathbb{C} 
		& \operatorname{Alt}(2p,\mathbb{C}) 
		& \{ 0\} 
		& \mathbb{R}^p\\
	\mathfrak{b}_n & m_1=m_{2p+1}=1 ~(p<\frac{n-1}{2}) 
		& \mathfrak{sl}(2p,\mathbb{C})\oplus \mathfrak{so}(2n-4p-1,\mathbb{C})
			\oplus \mathbb{C}^2 
		& \mathbb{C}^{2n-4p-1}\oplus \operatorname{Alt}(2p,\mathbb{C}) 
		& \mathbb{C}^{2p} 
		& (\mathbb{R}^2\oplus \mathbb{R}^p)\oplus \mathbb{R}^p\\
	\mathfrak{b}_{2p+1} & m_1=m_{2p+1}=1 
		& \mathfrak{sl}(2p,\mathbb{C})\oplus \mathbb{C}^2 
		& \mathbb{C}\oplus \operatorname{Alt}(2p,\mathbb{C}) 
		& \mathbb{C}^{2p} 
		& (\mathbb{R}\oplus \mathbb{R}^p)\oplus \mathbb{R}^p\\
	\mathfrak{c}_n & m_p=1~(p<n) 
		& \mathfrak{sl}(p,\mathbb{C})\oplus \mathfrak{sp}(n-p,\mathbb{C})\oplus \mathbb{C} 
		& \operatorname{Sym}(p,\mathbb{C}) 
		& \{ 0\} 
		& \mathbb{R}^p\\
	\mathfrak{c}_n & m_n=2 
		& \mathfrak{sl}(n,\mathbb{C})\oplus \mathbb{C} 
		& \operatorname{Sym}(n,\mathbb{C}) 
		& \{ 0\} 
		& \mathbb{R}^n\\
	\mathfrak{d}_n & m_1=2 
		& \mathfrak{so}(2n-2,\mathbb{C})\oplus \mathbb{C} 
		& \mathbb{C}^{2n-2} 
		& \{ 0\} 
		& \mathbb{R}^2\\
	\mathfrak{d}_n & m_{2p}=1~(p\leq \frac{n-2}{2}) 
		& \mathfrak{sl}(2p,\mathbb{C})\oplus \mathfrak{so}(2n-4p,\mathbb{C})\oplus \mathbb{C}
		& \operatorname{Alt}(2p,\mathbb{C}) 
		& \{ 0\} 
		& \mathbb{R}^p\\
	\mathfrak{d}_{2p+1} & m_{2p}=m_{2p+1}=1 
		& \mathfrak{sl}(2p,\mathbb{C})\oplus \mathbb{C}^2 
		& \operatorname{Alt}(2p,\mathbb{C}) 
		& \{ 0\} 
		& \mathbb{R}^p\\
	\mathfrak{d}_{2p} & m_{2p}=2 
		& \mathfrak{sl}(2p,\mathbb{C})\oplus \mathbb{C} 
		& \operatorname{Alt}(2p,\mathbb{C}) 
		& \{ 0\} 
		& \mathbb{R}^p\\
	\mathfrak{d}_{2p} & m_{2p-1}=2 
		& \mathfrak{sl}(2p,\mathbb{C})\oplus \mathbb{C} 
		& \operatorname{Alt}(2p,\mathbb{C}) 
		& \{ 0\} 
		& \mathbb{R}^p\\
	\mathfrak{d}_n & m_1=m_{2p+1}=1~(p<\frac{n-2}{2}) 
		& \mathfrak{sl}(2p,\mathbb{C})\oplus \mathfrak{so}(2n-4p-2,\mathbb{C})\oplus 
			\mathbb{C}^2 
		& \mathbb{C}^{2n-4p-2}\oplus \operatorname{Alt}(2p,\mathbb{C}) 
		& \mathbb{C}^{2p} 
		& (\mathbb{R}^2\oplus \mathbb{R}^p)\oplus \mathbb{R}^p\\
	\mathfrak{d}_{2p+2} & m_1=m_{2p+1}=m_{2p+2}=1 
		& \mathfrak{sl}(2p,\mathbb{C})\oplus \mathbb{C}^3 
		& \mathbb{C}^2\oplus \operatorname{Alt}(2p,\mathbb{C}) 
		& \mathbb{C}^{2p} 
		& (\mathbb{R}^2\oplus \mathbb{R}^p)\oplus \mathbb{R}^p \\
	\hline 
	\end{array}
	\end{align*}
	\caption{Spherical nilpotent orbits and slices for the 
	$L_{u}$-action on $\mathfrak{n}$
	: $\mathfrak{g}$ is of classical type}
	\label{table:classical}
	\end{minipage}
}
\end{table}

\begin{table}[p]
\rotatebox[origin=c]{90}{
	\begin{minipage}{\textheight}
	\begin{align*}
	\begin{array}{cccccc}
	\hline 
	\mathfrak{g} & \Omega (\mathcal{O}_X) & \mathfrak{l}=\mathfrak{g}(0) & \mathfrak{g}(2) 
		& \mathfrak{g}(3) & S_0 \\
	\hline 
	\mathfrak{e}_6 
		& (0,1,0,0,0,0) 
		& \mathfrak{sl}(6,\mathbb{C})\oplus \mathbb{C} 
		& \mathbb{C} 
		& \{ 0\} 
		& \mathbb{R} \\
	\mathfrak{e}_6 
		& (1,0,0,0,0,1) 
		& \mathfrak{so}(8,\mathbb{C})\oplus \mathbb{C}^2 
		& \mathbb{C}^8 
		& \{ 0\} 
		& \mathbb{R}^2 \\
	\mathfrak{e}_6 
		& (0,0,0,1,0,0) 
		& \mathfrak{sl}(3,\mathbb{C})\oplus \mathfrak{sl}(3,\mathbb{C})
			\oplus \mathfrak{sl}(2,\mathbb{C}) \oplus \mathbb{C} 
		& M(3,\mathbb{C}) 
		& \mathbb{C}^2 
		& \mathbb{R}^3\oplus \mathbb{R}\\
	\mathfrak{e}_7 
		& (1,0,0,0,0,0,0) 
		& \mathfrak{so}(12,\mathbb{C})\oplus \mathbb{C} 
		& \mathbb{C} 
		& \{ 0\} 
		& \mathbb{R} \\
	\mathfrak{e}_7 
		& (0,0,0,0,0,1,0) 
		& \mathfrak{so}(10,\mathbb{C})\oplus \mathfrak{sl}(2,\mathbb{C})\oplus \mathbb{C} 
		& \mathbb{C}^{10} 
		& \{ 0\} 
		& \mathbb{R}^2 \\
	\mathfrak{e}_7 
		& (0,0,0,0,0,0,2)
		& \mathfrak{e}_6(\mathbb{C})\oplus \mathbb{C} 
		& \mathbb{C}^{27}
		& \{ 0\} 
		& \mathbb{R}^3 \\
	\mathfrak{e}_7 
		& (0,0,1,0,0,0,0) 
		& \mathfrak{sl}(6,\mathbb{C})\oplus \mathfrak{sl}(2,\mathbb{C})\oplus \mathbb{C} 
		& \operatorname{Alt}(6,\mathbb{C}) 
		& \mathbb{C}^2 
		& \mathbb{R}^3\oplus \mathbb{R} \\
	\mathfrak{e}_7 
		& (0,1,0,0,0,0,1) 
		& \mathfrak{sl}(6,\mathbb{C})\oplus \mathbb{C}^2 
		& \mathbb{C}\oplus \operatorname{Alt}(6,\mathbb{C}) 
		& \mathbb{C}^6 
		& (\mathbb{R}\oplus \mathbb{R}^3)\oplus \mathbb{R}^3 \\
	\mathfrak{e}_8 
		& (0,0,0,0,0,0,0,1) 
		& \mathfrak{e}_7(\mathbb{C})\oplus \mathbb{C} 
		& \mathbb{C} 
		& \{ 0\} 
		& \mathbb{R} \\
	\mathfrak{e}_8 
		& (1,0,0,0,0,0,0,0) 
		& \mathfrak{so}(14,\mathbb{C})\oplus \mathbb{C} 
		& \mathbb{C}^{14} 
		& \{ 0\} 
		& \mathbb{R}^2 \\
	\mathfrak{e}_8 
		& (0,0,0,0,0,0,1,0) 
		& \mathfrak{e}_6(\mathbb{C})\oplus \mathfrak{sl}(2,\mathbb{C})\oplus \mathbb{C} 
		& \mathbb{C}^{27}
		& \mathbb{C}^2 
		& \mathbb{R}^3\oplus \mathbb{R} \\
	\mathfrak{e}_8 
		& (0,1,0,0,0,0,0,0) 
		& \mathfrak{sl}(8,\mathbb{C})\oplus \mathbb{C} 
		& \operatorname{Alt}(8,\mathbb{C}) 
		& \mathbb{C}^8 
		& \mathbb{R}^4\oplus \mathbb{R}^4 \\
	\mathfrak{f}_4 
		& (0,0,0,1) 
		& \mathfrak{sp}(3,\mathbb{C})\oplus \mathbb{C} 
		& \mathbb{C} 
		& \{ 0\} 
		& \mathbb{R} \\
	\mathfrak{f}_4 
		& (1,0,0,0) 
		& \mathfrak{so}(7,\mathbb{C})\oplus \mathbb{C} 
		& \mathbb{C}^7 
		& \{ 0\} 
		& \mathbb{R}^2 \\
	\mathfrak{f}_4 
		& (0,0,1,0) 
		& \mathfrak{sl}(3,\mathbb{C})\oplus \mathfrak{sl}(2,\mathbb{C})\oplus \mathbb{C} 
		& \operatorname{Sym}(3,\mathbb{C}) 
		& \mathbb{C}^2 
		& \mathbb{R}^3\oplus \mathbb{R} \\
	\mathfrak{g}_2 
		& (0,1) 
		& \mathfrak{sl}(2,\mathbb{C})\oplus \mathbb{C} 
		& \mathbb{C} 
		& \{ 0\} 
		& \mathbb{R} \\
	\mathfrak{g}_2 
		& (1,0) 
		& \mathfrak{sl}(2,\mathbb{C})\oplus \mathbb{C} 
		& \mathbb{C} 
		& \mathbb{C}^2 
		& \mathbb{R}\oplus \mathbb{R} \\
	\hline 
	\end{array}
	\end{align*}
	\caption{Spherical nilpotent orbits and slices for the 
	$L_{u}$-action on $\mathfrak{n}$
	: $\mathfrak{g}$ is of exceptional type}
	\label{table:exceptional}
	\end{minipage}
}
\end{table}


\subsection{Corollary of proof for Theorem \ref{thm:slice-n}}
\label{subsec:corollary}

Finally, 
we give two corollaries of the proof of Theorem \ref{thm:slice-n} 
on $\mathcal{O}_X$ with $\operatorname{ht}(\mathcal{O}_X)=2$. 

The first corollary below is concerned to the property 
on the $L_u$-action on $\mathfrak{n}=\mathfrak{g}(2)$. 

\begin{corollary}
\label{cor:two}
Let $\mathcal{O}_X$ be a a nilpotent orbit in a complex simple Lie algebra $\mathfrak{g}$. 
If $\operatorname{ht}(\mathcal{O}_X)=2$, then we have: 
\begin{enumerate}
	\item The $L_u$-action on $\mathfrak{n}$ is geometrically equivalent to the isotropy representation 
	for some non-compact irreducible Hermitian symmetric space $G/K$, 
	namely, 
	$L_u$ is locally isomorphic to $K$ 
	and the $L_u$-action on $\mathfrak{n}$ is isomorphic to the $K$-action on 
	the tangent space $T_{eK}(G/K)$ at the origin $eK\in G/K$. 
	
	\item One can take a slice $S$ for the strongly visible $G_u$-action on $\mathcal{O}_X$ 
	satisfying $\dim _{\mathbb{R}}S=\operatorname{rank}G/K$. 
\end{enumerate}
\end{corollary}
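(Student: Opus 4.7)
The plan is to establish both statements by a case-by-case inspection of the height-two rows of Tables \ref{table:classical} and \ref{table:exceptional}, that is, those rows for which the column $\mathfrak{g}(3)$ is $\{0\}$, so that $\mathfrak{n}=\mathfrak{g}(2)$.

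First, I would verify part~(1) by matching each entry of the $\mathfrak{g}(2)$-column against a known isotropy representation of an irreducible non-compact Hermitian symmetric pair $(G,K)$. The relevant dictionary is: $(SL(p,\mathbb{C})\times SL(p,\mathbb{C})\times \mathbb{C}^{\times}, M(p,\mathbb{C}))$ arises as the complexified isotropy representation of $SU(p,p)/S(U(p)\times U(p))$; $(SL(p,\mathbb{C})\times \mathbb{C}^{\times},\operatorname{Sym}(p,\mathbb{C}))$ from $Sp(p,\mathbb{R})/U(p)$; $(SL(2p,\mathbb{C})\times \mathbb{C}^{\times},\operatorname{Alt}(2p,\mathbb{C}))$ from $SO^{*}(4p)/U(2p)$; $(SO(N,\mathbb{C})\times \mathbb{C}^{\times},\mathbb{C}^{N})$ from $SO_0(2,N)/(SO(2)\times SO(N))$; $(\mathbb{C}^{\times},\mathbb{C})$ from $SU(1,1)/U(1)$; and $(E_6(\mathbb{C})\times \mathbb{C}^{\times},\mathfrak{J}_{\mathbb{C}})$ from $E_{7(-25)}/(E_6\cdot U(1))$. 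In every height-two row of the two tables, the $L_u$-action on $\mathfrak{g}(2)$ is geometrically equivalent (after absorbing the extra compact factors that act trivially on $\mathfrak{g}(2)$) to one of these representations, which proves~(1).

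For part~(2), I would invoke the induction construction from Section \ref{sec:induction}: Theorem \ref{thm:induction} produces a slice for the strongly visible $G_u$-action on $\mathcal{O}_X$ as $S:=S_0\cap \mathfrak{n}^{\circ}$, where $S_0$ is the real vector subspace of Theorem \ref{thm:slice-n}. Since $\mathfrak{n}^{\circ}$ is open and dense in $\mathfrak{n}=\mathfrak{g}(2)$ and $S_0$ meets the generic $L_u$-orbit, the intersection $S_0\cap \mathfrak{n}^{\circ}$ is a non-empty open subset of $S_0$, so $\dim_{\mathbb{R}}S=\dim_{\mathbb{R}}S_0$. The remaining step is to read off from the last column of the tables that $\dim_{\mathbb{R}}S_0$ equals the rank of the Hermitian symmetric space $G/K$ identified in step~(1): $p$ in the cases $M(p,\mathbb{C})$, $\operatorname{Sym}(p,\mathbb{C})$, and $\operatorname{Alt}(2p,\mathbb{C})$; $2$ in the cases $\mathbb{C}^{N}$ with $N\geq 3$; $1$ in the case $\mathbb{C}$; and $3$ in the case $\mathfrak{J}_{\mathbb{C}}$.

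The only non-routine point is the geometric identification in step~(1): one must recognize that the eight multiplicity-free representations of Table \ref{table:linear} which appear with $\mathfrak{g}(3)=\{0\}$ are exactly the complexified isotropy representations of irreducible non-compact Hermitian symmetric spaces, and that the slices $S_0$ chosen in the course of proving Theorem \ref{thm:slice-n} coincide, up to $L_u$-translation, with real Cartan subspaces of these symmetric pairs (which explains why their dimensions are the ranks). Once this identification is accepted---and it is visible row-by-row from the explicit root-vector descriptions in Sections \ref{subsec:a}--\ref{subsec:g2}---both assertions of the corollary follow with no further calculation.
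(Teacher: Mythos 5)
Your proposal is correct and follows essentially the same route as the paper, which disposes of part (1) by pointing to the case-by-case analysis of Sections \ref{subsec:a}--\ref{subsec:g2} together with \cite{irr}, and of part (2) by citing \cite{symmetric}; your explicit dictionary between the height-two representations and the Hermitian symmetric pairs $SU(p,p)$, $Sp(p,\mathbb{R})$, $SO^{*}(4p)$, $SO_0(2,N)$, $SU(1,1)$, $E_{7(-25)}$ is exactly the identification those references supply. Your derivation of the dimension count in (2) from $S=S_0\cap\mathfrak{n}^{\circ}$ and the openness of $\mathfrak{n}^{\circ}$ is a slightly more self-contained justification than the paper's bare citation, but it is the same argument in substance.
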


\begin{proof}
The first statement follows from the proof of Theorem \ref{thm:slice-n} 
given in Sections \ref{subsec:a}--\ref{subsec:g2} and \cite{irr}. 
The second one is an immediate consequence of \cite{symmetric}. 
\end{proof}


A special case of height two nilpotent orbits 
is the minimal nilpotent orbit. 
Here, a nilpotent orbit $\mathcal{O}_X$ in the complex semisimple Lie algebra $\mathfrak{g}$ is 
called \textit{minimal} if the closure of $\mathcal{O}_X$ is contained in that of 
any non-zero nilpotent orbit in $\mathfrak{g}$, namely, $\overline{\mathcal{O}_X}\subset 
\overline{\mathcal{O'}}$ for any $\mathcal{O}'\in \mathcal{N}^*/G_{\mathbb{C}}$. 
It is known that there exists uniquely the minimal nilpotent orbit 
in a complex simple Lie algebra $\mathfrak{g}$. 

The second corollary gives a new characterization for a complex nilpotent orbit 
to be minimal by the nilpotent subalgebra $\mathfrak{n}$ as follows. 

\begin{corollary}
\label{cor:minimal}
For a nilpotent orbit $\mathcal{O}_X$ in a complex simple Lie algebra $\mathfrak{g}$, 
the following two conditions are equivalent: 
\begin{enumerate}
	\renewcommand{\theenumi}{\roman{enumi}}
	\item $\mathcal{O}_X$ is minimal. 
	\item $\dim _{\mathbb{C}}\mathfrak{n}=1$. 
\end{enumerate}
Moreover, 
the $G_u$-action on the minimal $\mathcal{O}_X$ is strongly visible 
with one-dimensional slice. 
\end{corollary}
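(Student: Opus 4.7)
The plan is to establish the equivalence (i) $\Leftrightarrow$ (ii) first, and then deduce the last assertion from Theorem \ref{thm:slice-n} together with the construction in Section \ref{sec:induction}.

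For (i) $\Rightarrow$ (ii), I would use the classical fact that the minimal nilpotent orbit is the $G_{\mathbb{C}}$-orbit through a highest root vector $E_{\beta}$. Taking $X = E_{\beta}$ (and rescaling), the corresponding $\mathfrak{sl}_2$-triple has neutral element $H = H_{\beta}$, the coroot of $\beta$, so $\alpha(H_{\beta}) = \langle \alpha, \beta^{\vee}\rangle$ for every $\alpha \in \Delta$. Since the highest root $\beta$ is long, a standard calculation in root systems gives $\alpha(H_{\beta}) \in \{-2,-1,0,1,2\}$ with the value $\pm 2$ attained only at $\alpha = \pm \beta$. Consequently $\mathfrak{g}(2) = \mathfrak{g}_{\beta}$ is one-dimensional and $\mathfrak{g}(m) = 0$ for $m \geq 3$, hence $\dim_{\mathbb{C}} \mathfrak{n} = 1$.

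For the converse (ii) $\Rightarrow$ (i), suppose $\dim_{\mathbb{C}} \mathfrak{n} = 1$. Since $X \in \mathfrak{g}(2) \subseteq \mathfrak{n}$ is non-zero, we must have $\mathfrak{n} = \mathfrak{g}(2) = \mathbb{C}X$ and $\mathfrak{g}(m) = 0$ for $m \geq 3$. By Lemma \ref{lem:eigenspace}, $\mathfrak{g}(2) = \bigoplus_{\alpha(H)=2} \mathfrak{g}_{\alpha}$, so there is a unique root $\alpha$ with $\alpha(H) = 2$, and $X \in \mathfrak{g}_{\alpha}$. Because $H \in \mathfrak{a}_+$, we have $\alpha \in \Delta^+$. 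Writing $\beta - \alpha$ as a non-negative integer combination of simple roots and using $\alpha_i(H) \geq 0$, we obtain $\beta(H) \geq \alpha(H) = 2$; on the other hand $\beta(H) \leq \operatorname{ht}(\mathcal{O}_X) = 2$ by Lemma \ref{lem:height-max} together with the vanishing of $\mathfrak{g}(m)$ for $m \geq 3$. Therefore $\beta(H) = 2$, forcing $\mathfrak{g}_{\beta} \subseteq \mathfrak{g}(2) = \mathfrak{g}_{\alpha}$ and hence $\alpha = \beta$. Thus $X$ is a highest root vector, and $\mathcal{O}_X$ is the minimal nilpotent orbit.

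For the ``moreover'' assertion, apply Theorem \ref{thm:slice-n} to the minimal orbit to obtain a real subspace $S_0 \subseteq \mathfrak{n}$ with $\mathfrak{n} = L_u \cdot S_0$ and $\sigma|_{S_0} = \operatorname{id}_{S_0}$; since $\dim_{\mathbb{C}} \mathfrak{n} = 1$ and $S_0$ is totally real, we must have $\dim_{\mathbb{R}} S_0 = 1$. The construction in the proof of Theorem \ref{thm:induction} then yields a slice $S := S_0 \cap \mathfrak{n}^{\circ}$ for the strongly visible $G_u$-action on $\mathcal{O}_X$; since $\mathfrak{n}^{\circ}$ is open in $\mathfrak{n}$ and contains $X \in S_0$, $S$ is a non-empty open subset of the real line $S_0$, hence a one-dimensional real submanifold. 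The only mild subtlety in the argument is the input that the highest root $\beta$ of a simple root system is long and that $\langle \alpha, \beta^{\vee}\rangle$ only reaches the extreme values $\pm 2$ at $\alpha = \pm \beta$, a fact I would either quote from standard references or read off the explicit lists in Tables \ref{table:classical} and \ref{table:exceptional}.
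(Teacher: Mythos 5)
Your proof is correct, but it takes a genuinely different route from the paper. The paper establishes both implications by inspection of the case-by-case data assembled in Section \ref{sec:proof-n}: for (i) $\Rightarrow$ (ii) it reads off $\dim_{\mathbb{C}}\mathfrak{g}(2)=1$ from the table of minimal orbits, and for (ii) $\Rightarrow$ (i) it first shows (as you do) that $\dim_{\mathbb{C}}\mathfrak{n}=1$ forces $\operatorname{ht}(\mathcal{O}_X)=2$ and then checks against Tables \ref{table:classical} and \ref{table:exceptional} that no non-minimal height-two orbit has $\dim_{\mathbb{C}}\mathfrak{n}=1$. You instead give a uniform, classification-free argument: the forward direction via the standard facts that the minimal orbit is the orbit of a highest root vector and that $\langle\alpha,\beta^{\vee}\rangle\in\{-1,0,1\}$ for $\alpha\neq\pm\beta$ (valid because $\beta$ is long), and the converse via the clean observation that the unique root $\alpha$ with $\alpha(H)=2$ must equal $\beta$ since $\beta-\alpha$ is a non-negative combination of simple roots and $\beta(H)=\operatorname{ht}(\mathcal{O}_X)=2$. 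What your approach buys is independence from the tables for the equivalence itself (you still invoke Theorem \ref{thm:slice-n}, which is proved case-by-case, for the ``moreover'' part); what the paper's approach buys is that it needs no input beyond its own Section \ref{sec:proof-n}, in particular not the external characterization of the minimal orbit as the highest-root orbit. One small imprecision: in the last step you assert $X\in S_0$, which need not hold ($S_0$ is only a real line in $\mathfrak{g}_{\beta}=\mathbb{C}X$); but $S=S_0\cap\mathfrak{n}^{\circ}$ is non-empty anyway, either because $L_u\cdot S=\mathfrak{n}^{\circ}\neq\emptyset$ (Lemma \ref{lem:slice-n0}) or because $\mathfrak{n}^{\circ}$ is dense in $\mathfrak{n}$, so the conclusion that $S$ is a one-dimensional slice ($\simeq\mathbb{R}^{\times}$, as the paper notes) stands.
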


\begin{proof}
Table \ref{table:minimal} gives a list of the minimal nilpotent orbit 
for each complex simple Lie algebra $\mathfrak{g}$. 
This shows that 
$\dim _{\mathbb{C}}\mathfrak{n}=\dim _{\mathbb{C}}\mathfrak{g}(2)=1$ 
for minimal $\mathcal{O}_X$. 
By Theorem \ref{thm:slice-n}, 
the $L_u$-action on $\mathfrak{n}$ is strongly visible 
with slice $S\simeq \mathbb{R}$. 
Hence, $S=S_0\cap \mathfrak{n}^{\circ}\simeq \mathbb{R}^{\times}$ becomes 
a slice for the $G_u$-action on $\mathcal{O}_X$.

Conversely, suppose that $\dim _{\mathbb{C}}\mathfrak{n}=1$. 
Then, the height of $\mathcal{O}_X$ has to be equal to two. 
Indeed, 
if $\operatorname{ht}(\mathcal{O}_X)=d>2$, 
then $\mathfrak{n}$ contains the complex vector subspace $\mathfrak{g}(2)\oplus \mathfrak{g}(d)$. 
Obviously, $\dim _{\mathbb{C}}(\mathfrak{g}(2)\oplus \mathfrak{g}(d))\geq 2$, 
from which we have $\dim _{\mathbb{C}}\mathfrak{n}\geq 2$. 
Let us assume that $\operatorname{ht}(\mathcal{O}_X)=2$. 
From our case-by-case analysis on the $L_u$-action on $\mathfrak{n}$ 
(see also Tables \ref{table:classical} and \ref{table:exceptional}), 
it turns out that 
$\dim _{\mathbb{C}}\mathfrak{n}\neq 1$ 
if $\mathcal{O}_X$ is not minimal. 
Therefore, we have proved that 
$\dim _{\mathbb{C}}\mathfrak{n}=1$ only if $\mathcal{O}_X$ is minimal. 

Consequently, Corollary \ref{cor:minimal} has been proved. 
\end{proof}

\begin{table}[htbp]
\begin{align*}
\begin{array}{ccccc}
\hline 
\mathfrak{g} & \Omega (\mathcal{O}_X) & \mathfrak{g}(2) & S_0 & \text{Section}\\
\hline 
\mathfrak{a}_{n-1} & (1,0,\ldots ,0,1) & \mathbb{C} & \mathbb{R} & \ref{subsubsec:a}\\
\mathfrak{b}_{n} & (0,1,0,\ldots ,0) & \mathbb{C} & \mathbb{R} & \ref{subsubsec:b}\\
\mathfrak{c}_{n} & (1,0,\ldots ,0) & \mathbb{C} & \mathbb{R} & \ref{subsec:c}\\
\mathfrak{d}_{n} & (0,1,0,\ldots ,0) & \mathbb{C} & \mathbb{R} & \ref{subsubsec:d}\\
\mathfrak{e}_{6} & (0,1,0,0,0,0)& \mathbb{C} & \mathbb{R} & \ref{subsubsec:e6}\\
\mathfrak{e}_{7} & (1,0,0,0,0,0,0)& \mathbb{C} & \mathbb{R} & \ref{subsubsec:e7}\\
\mathfrak{e}_{8} & (0,0,0,0,0,0,0,1)& \mathbb{C} & \mathbb{R} & \ref{subsubsec:e8}\\
\mathfrak{f}_{4} & (0,0,0,1)& \mathbb{C} & \mathbb{R} & \ref{subsubsec:f4}\\
\mathfrak{g}_{2} & (0,1)& \mathbb{C} & \mathbb{R} & \ref{subsubsec:g2}\\
\hline 
\end{array}
\end{align*}
\caption{Minimal nilpotent orbit in $\mathfrak{g}$}
\label{table:minimal}
\end{table}




\begin{thebibliography}{99}

\bibitem{bala-carter1}
P. Bala, R. W. Carter, 
Classes of unipotent elements in simple algebraic groups. I. 
\textit{Math. Proc. Cambridge Philos. Soc.} \textbf{79} (1976), 401--425. 

\bibitem{bala-carter2}
P. Bala, R. W. Carter, 
Classes of unipotent elements in simple algebraic groups. II. 
\textit{Math. Proc. Cambridge Philos. Soc.} \textbf{80} (1976), 1--17. 


\bibitem{br}
C. Benson, G. Ratcliff, 
A classification of multiplicity free actions,
\textit{J. Algebra} \textbf{181} (1996), 152--186. 

\bibitem{nilpotent}
D. Collingwood, W. McGovern, 
\textit{Nilpotent orbits in semisimple Lie algebras}, 
Van Nostrand Reinhold Mathematics Series, Van Nostrand Reinhold Co., New York, 1993. 

\bibitem{dynkin}
E. B. Dynkin, 
Semisimple subalgebras of semisimple Lie algebras, 
\textit{Mat. Sbornik N.S.} \textbf{30} (1952), 349--462 (3 plates). 

\bibitem{faraut-thomas}
J. Faraut, E. G. F. Thomas, 
Invariant Hilbert spaces of holomorphic functions, 
\textit{J. Lie Theory} \textbf{9} (1999), 383--402. 

\bibitem{kac}
V. Kac, 
Some remarks on nilpotent orbits, 
\textit{J. Algebra} \textbf{64} (1980), 190--213. 

\bibitem{knapp}
A. W. Knapp, 
\textit{Lie groups beyond an introduction, Second edition}, 
Progress in Mathematics, \textbf{140}, Birkh\"auser Boston, Inc., Boston, MA, 2002. 

\bibitem{saga}
T. Kobayashi, 
Multiplicity free theorem in branching problems of unitary highest weight modules, 
in \textit{Proceedings of Symposium on Representation Theory 1997 (ed. K. Mimachi)} (1997), 9--17. 

\bibitem{triunity}
T. Kobayashi, 
Geometry of multiplicity-free representations of $GL(n)$, 
visible actions on flag varieties, and triunity, 
\textit{Acta. Appl. Math.} \textbf{81} (2004), 129--146.

\bibitem{mftheorem}
T. Kobayashi, 
Multiplicity-free representations and visible actions on complex manifolds, 
\textit{Publ. Res. Inst. Math. Sci.} \textbf{41} (2005), 497--549, 
special issue commemorating the fortieth anniversary of the founding of RIMS. 

\bibitem{cartan}
T. Kobayashi, 
A generalized Cartan decomposition for the double coset space 
$(U(n_1)\times U(n_2)\times U(n_3))\backslash U(n)/(U(p)\times U(q))$, 
\textit{J. Math. Soc. Japan} \textbf{59} (2007), 669--691.

\bibitem{symmetric}
T. Kobayashi, 
Visible actions on symmetric spaces, 
\textit{Transform. Group} \textbf{12} (2007), 671--694. 

\bibitem{propagation}
T. Kobayashi, 
Propagation of multiplicity-freeness property for holomorphic vector bundles, 
in \textit{Lie Groups: structure, actions, and representations}, 113--140, 
Progress in Mathematics, 
\textbf{306}, Birkh\"auser Boston, Inc., Boston, MA, 2013. 

\bibitem{ko}
T. Kobayashi, Y. Oshima, 
Classification of symmetric pairs with discretely decomposable restrictions of $(\mathfrak{g},K)$-modules, 
\textit{J. Reine Angew. Math.} \textbf{703} (2015), 201--223. 


\bibitem{kostant}
B. Kostant, 
The principal three-dimensional subgroup and Betti numbers of a complex simple Lie group, 
\textit{Amer. J. Math.} \textbf{81} (1959), 973--1032. 

\bibitem{leahy}
A. Leahy, 
A classification of multiplicity free representations, 
\textit{J. Lie Theory} \textbf{8} (1998), 367--391. 

\bibitem{malcev}
A. I. Mal'cev, 
On semi-simple subgroups of Lie groups, 
\textit{Amer. Math. Soc. Translation} \textbf{1950} (1950), 43 pp. 

\bibitem{pa1}
D. Panyushev, 
Complexity and nilpotent orbits, 
Manuscripta Math. \textbf{83} (1994), 223--237. 

\bibitem{pa2}
D. Panyushev, 
On spherical nilpotent orbits and beyond, 
\textit{Ann. Inst. Fourier} \textbf{49} (1999), 1453--1476. 

\bibitem{pa3}
D. Panyushev, 
Some amazing properties of spherical nilpotent orbits, 
\textit{Math. Z.} \textbf{245} (2003), 557--580.

\bibitem{irr}
A. Sasaki, 
Visible actions on irreducible multiplicity-free spaces, 
\textit{Int. Math. Res. Not. IMRN} (2009), 3445--3466. 

\bibitem{nontube}
A. Sasaki, 
A characterization of non-tube type Hermitian symmetric spaces 
by visible actions, 
\textit{Geom. Dedicata} \textbf{145} (2010), 151--158. 


\bibitem{red}
A. Sasaki, 
Visible actions on reducible multiplicity-free spaces, 
\textit{Int. Math. Res. Not. IMRN} (2011), 885--929. 


\bibitem{compatible}
A. Sasaki, 
Compatible automorphisms and visible actions on linear spaces 
\textit{in preparation}. 

\bibitem{springer}
T. Springer, R. Steinberg, 
Conjugacy classes, 
in \textit{Seminar on Algebraic Groups and Related Finite Groups,} 
Lecture Notes in Mathematics, \textbf{131}, Springer, Berlin, 1970, 167--266. 

\bibitem{tanakaC}
Y. Tanaka, 
Visible actions on flag varieties of type C and a generalization of the Cartan decomposition, 
\textit{Tohoku Math. J. } \textbf{65} (2013), 281--295. 

\bibitem{tanakaD}
Y. Tanaka, 
Visible actions on flag varieties of type D and a generalization of the Cartan decomposition, 
\textit{J. Math. Soc. Japan} \textbf{65} (2013), 931--965. 

\bibitem{tanakaB}
Y. Tanaka, 
Visible actions on flag varieties of type B and a generalization of the Cartan decomposition, 
\textit{Bull. Aust. Math. Soc.} \textbf{88} (2013), 81--97. 

\bibitem{tanaka-except}
Y. Tanaka, 
Visible actions on flag varieties of exceptional groups 
and a generalization of the Cartan decomposition, 
\textit{J. Algebra} \textbf{399} (2014), 170--189. 

\bibitem{vk}
E. B. Vinberg, B. N. Kimelfeld, 
Homogeneous domains on flag manifolds and spherical subgroups of semisimple Lie groups, 
\textit{Funct. Anal. Appl.,} \textbf{12} (1978), 168--174. 

\bibitem{vinberg}
E. B. Vinberg, 
Complexity of actions of reductive groups, 
\textit{Functional Anal. Appl.} \textbf{20} (1986), 1--11. 

\bibitem{vogan}
D. Vogan, 
Associated varieties and unipotent representations, 
Progress in Mathematics \textbf{101}, 315--388, Birkh\"auser Boston, Inc., Boston, MA, 1991. 

\end{thebibliography}
\end{document}